\numberwithin{equation}{section}
\newcommand{\R}{\mathbb{R}}
\newcommand{\h}{H^{s}(\R^{N})}
\newcommand{\x}{X^{s}(\R^{N+1}_{+})}
\newcommand{\y}{Y_{\mu}}
\newcommand{\La}{\Lambda}
\newcommand{\2}{2^{*}_{s}}
\newcommand{\ri}{\rightarrow}
\DeclareMathOperator{\dive}{div}
\DeclareMathOperator{\supp}{supp}
\DeclareMathOperator{\e}{\varepsilon}
\DeclareMathOperator{\p}{\epsilon}
\newtheorem{prop}{Proposition}[section]
\newtheorem{lem}{Lemma}[section]
\newtheorem{thm}{Theorem}[section]
\newtheorem{remark}{Remark}[section]
\title[A fractional relativistic Schr\"odinger equation with critical growth]{Concentration phenomena for a fractional relativistic Schr\"odinger equation with critical growth}
\author[V. Ambrosio]{Vincenzo Ambrosio}
\address{Vincenzo Ambrosio\hfill\break\indent
Dipartimento di Ingegneria Industriale e Scienze Matematiche \hfill\break\indent
Universit\`a Politecnica delle Marche\hfill\break\indent
Via Brecce Bianche, 12\hfill\break\indent
60131 Ancona (Italy)}
\email{v.ambrosio@staff.univpm.it}
\keywords{fractional relativistic Schr\"odinger operator; critical exponent; extension method; variational methods}
\subjclass[2010]{35R11, 35J10, 35J20, 35J60, 35B09, 35B33}
\date{}
\begin{document}

\begin{abstract}
In this paper, we are concerned with the following fractional relativistic Schr\"odinger equation with critical growth:
\begin{equation*}
\left\{
\begin{array}{ll}
(-\Delta+m^{2})^{s}u + V(\e x) u= f(u)+u^{\2-1} \mbox{ in } \R^{N}, \\
u\in H^{s}(\R^{N}), \quad u>0 \, \mbox{ in } \R^{N},
\end{array}
\right.
\end{equation*}
where $\e>0$ is a small parameter, $s\in (0, 1)$, $m>0$, $N> 2s$, $\2=\frac{2N}{N-2s}$ is the fractional critical exponent, $(-\Delta+m^{2})^{s}$ is the fractional relativistic Schr\"odinger operator, $V:\R^{N}\ri \R$ is a continuous potential, and $f:\R\ri \R$ is a superlinear continuous nonlinearity with subcritical growth at infinity.  Under suitable assumptions on the potential $V$, we construct a family of positive solutions $u_{\e}\in H^{s}(\R^{N})$, with exponential decay, which concentrates around a local minimum of  $V$ as $\e\ri 0$.  
\end{abstract}
\maketitle

\section{Introduction}

\noindent
In this paper, we continue the study started in \cite{ADCDS} concerning the concentration phenomena for a class of fractional relativistic Schr\"odinger equations. More precisely, we focus on the following nonlinear fractional elliptic equation with critical growth:
\begin{equation}\label{P}
\left\{
\begin{array}{ll}
(-\Delta+m^{2})^{s}u + V(\e x) u= f(u)+u^{\2-1} \mbox{ in } \R^{N}, \\
u\in H^{s}(\R^{N}), \quad u>0 \mbox{ in } \R^{N},
\end{array}
\right.
\end{equation}
where $\e>0$ is a small parameter, $m>0$, $s\in (0, 1)$, $N> 2s$, $\2:=\frac{2N}{N-2s}$ is the fractional critical exponent, and $V:\R^{N}\ri \R$ and $f:\R\ri \R$ are continuous functions. 
The operator $(-\Delta+m^{2})^{s}$ is defined in Fourier space as multiplication by the symbol $(|k|^{2}+m^{2})^{s}$ (see \cite{Hormander, LL}), i.e., for each function $u:\R^{N}\ri \R$ that belongs to the Schwartz space $\mathcal{S}(\R^{N})$ of rapidly decreasing functions, we have
$$
\mathcal{F}((-\Delta+m^{2})^{s}u)(k):=(|k|^{2}+m^{2})^{s}\mathcal{F}u(k), \quad k\in \R^{N}, 
$$
where we denoted by
$$
\mathcal{F}u(k):=(2\pi)^{-\frac{N}{2}} \int_{\R^{N}} e^{-\imath k\cdot x} u(x)\, dx, \quad k\in \R^{N}, 
$$ 
the Fourier transform of $u$.
We also recall the following alternative representation of $(-\Delta+m^{2})^{s}$ in terms of singular integrals (see \cite{FF, LL}): 
\begin{align}\label{FFdef}
(-\Delta+m^{2})^{s}u(x):=m^{2s}u(x)+C(N,s) m^{\frac{N+2s}{2}} P.V. \int_{\R^{N}} \frac{u(x)-u(y)}{|x-y|^{\frac{N+2s}{2}}}K_{\frac{N+2s}{2}}(m|x-y|)\, dy, \quad x\in \R^{N},
\end{align}
where $P. V.$ indicates the Cauchy principal value, $K_{\nu}$ is the modified Bessel function of the third kind of index $\nu$ (see \cite{ArS, Erd}), 
and
$$
C(N, s):=2^{-\frac{N+2s}{2}+1} \pi^{-\frac{N}{2}} 2^{2s} \frac{s(1-s)}{\Gamma(2-s)}.
$$
When $s=\frac{1}{2}$, the operator $\sqrt{-\Delta+m^{2}}$ was considered in \cite{Weder1, Weder2} for spectral problems and has  a clear meaning in relativistic quantum mechanics. 
Indeed, the energy for the motion of a free relativistic particle of mass $m$ and momentum $p$ is given by:
$$
\mathcal{E}:=\sqrt{p^{2}c^{2}+m^{2}c^{4}},
$$ 
where $c$ is the speed of the light.
With the usual quantization rule $p\mapsto -\imath \hbar \nabla$,  where $\hbar$ is Planck's constant, we obtain the so-called relativistic Hamiltonian operator: 
$$
\mathcal{H}:=\sqrt{-\hbar^{2}c^{2}\Delta+m^{2}c^{4}}-mc^{2}.
$$
The point of the subtraction of the constant $mc^{2}$ is to make sure that the spectrum of the operator $\mathcal{H}$ is $[0, \infty)$, and this explains the terminology of relativistic Schr\"odinger operators for the operators of the form $\mathcal{H}+V(x)$, where $V(x)$ is a potential (see \cite{CMS}). 
Equations involving $\mathcal{H}$ arise in the study of time-dependent Schr\"odinger equations of the type:
\begin{equation*}
\imath \hbar\frac{\partial \Phi}{\partial t}=\mathcal{H}\Phi-f(x, |\Phi|^{2})\Phi, \quad  (t, x)\in \R\times \R^{N},
\end{equation*}
where $\Phi: \R\times \R^{N}\ri \mathbb{C}$ is a wave function and $f: \R^{N}\times [0, \infty)\ri \R$ is a nonlinear function, 
which describe the 
dynamics of systems consisting of identical spin-0 bosons whose motions are relativistic, for instance, boson stars.
Physical models related to $\mathcal{H}$ have been widely analyzed over the past 30 years, and there exists an important literature on the spectral properties of relativistic Hamiltonians; most of it has been strongly influenced by the works of Lieb on the stability of relativistic matter (see \cite{ES, FJL, LY1, LS} and references therein).
On the other hand, from a probabilistic point of view, $m^{2s}-(-\Delta+m^{2})^{s}$ is the infinitesimal generator of a L\'evy process $X^{2s, m}_{t}$ called $2s$-stable relativistic process having the following characteristic function:
$$
E^{0} e^{\imath k\cdot X^{2s, m}_{t}}=e^{-t[(|k|^{2}+m^{2})^{s}-m^{2s}]}, \quad k\in \R^{N};
$$
(see, for example, \cite{CMS, ryznar}). For a more detailed discussion on $(-\Delta+m^{2})^{s}$, we refer the interested reader to \cite{AJDE}.

When $m\ri 0$, $(-\Delta+m^{2})^{s}$ reduces to the well-known fractional Laplacian $(-\Delta)^{s}$ defined via Fourier transform by: 
$$
\mathcal{F}((-\Delta)^{s}u)(k):=|k|^{2s}\mathcal{F}u(k), \quad k\in \R^{N}, 
$$
or through singular integrals by:
\begin{align}\label{PIFFR}
(-\Delta)^{s}u(x): =C_{N, s} \, P.V. \int_{\R^{N}} \frac{u(x)-u(y)}{|x-y|^{N+2s}}\, dy, \quad x\in \R^{N}, \quad C_{N, s}:= \pi^{-\frac{N}{2}} 2^{2s} \frac{\Gamma(\frac{N+2s}{2})}{\Gamma(2-s)}s(1-s).
\end{align}
This operator has gained tremendous popularity during the last two decades thanks to its applications in different fields, such as, among others, phase transition phenomena, crystal dislocation, population dynamics, anomalous diffusion, flame propagation, chemical reactions of liquids, conservation laws, quasi-geostrophic flows, and water waves. 
Moreover, the fractional Laplacian is the infinitesimal generator of a (rotationally) symmetric $2s$-stable L\'evy process. For a very nice introduction to $(-\Delta)^{s}$ and its applications, consult \cite{BuVa, DPV}.
Note that the most striking difference between the operators $(-\Delta)^{s}$ and $(-\Delta+m^{2})^{s}$ is that the first one
is homogeneous in scaling, whereas the second one is inhomogeneous as should be clear from the presence of the Bessel function $K_{\nu}$ in \eqref{FFdef}.

We emphasize that in these years, several authors dealt with the existence and multiplicity of solutions for the following fractional  Schr\"odinger equation:
\begin{equation}\label{ANNALISA}
\left\{
\begin{array}{ll}
\e^{2s}(-\Delta)^{s}u + V(x) u= f(u)+\gamma |u|^{\2-2}u \mbox{ in } \R^{N}, \\
u\in H^{s}(\R^{N}), \quad u>0 \mbox{ in } \R^{N},
\end{array}
\right.
\end{equation}
where $\e>0$, $\gamma\in \{0, 1\}$, $V:\R^{N}\ri \R$ and $f:\R\ri \R$ satisfy suitable conditions (see, for instance, \cite{AmbrosioBOOK} and references therein).

As $s\ri 1$, equation \eqref{ANNALISA} boils down to the classical nonlinear Schr\"odinger equation of the form:
\begin{equation}\label{NONLINEARLIBRO}
\left\{
\begin{array}{ll}
-\e^{2} \Delta u+V(x)u=g(u) \mbox{ in } \R^{N}, \\
u\in H^{1}(\R^{N}), \quad u>0 \mbox{ in } \R^{N}.
\end{array}
\right.
\end{equation}
Since we cannot review the huge bibliography on this topic, we refer to \cite{ADOS, AF, DF, FigFu, FW, Rab, Wang} for some results on  the existence, multiplicity, and concentration of positive solutions to \eqref{NONLINEARLIBRO} for small $\e>0$.  
We recall that a positive solution $u_{\e}$ of \eqref{NONLINEARLIBRO} is said to concentrate at $x_{0}\in \R^{N}$ as $\e\ri 0$ if
\begin{align*}
\forall \delta>0, \quad \exists \e_{0}>0, R>0: \quad u_{\e}(x)\leq \delta, \quad \forall |x-x_{0}|\geq \e R, \, \e<\e_{0}.
\end{align*}
The interest in studying semiclassical solutions of \eqref{NONLINEARLIBRO}, i.e., solutions of \eqref{NONLINEARLIBRO} with small $\e>0$, 
is justified by the well-known fact that  the transition from quantum mechanics to classical mechanics can be described by letting $\e\ri 0$.
A typical feature of semiclassical solutions is that they tend to concentrate as $\e\ri 0$ around critical points of the potential $V$.

On the other hand, several existence and multiplicity results for fractional equations driven by $(-\Delta+m^{2})^{s}$, with $m>0$, have been established in \cite{Ajmp, ADCDS, BMP, CZN1, SecchiJDDE, Shen}. In particular, in \cite{ADCDS}, the author investigated \eqref{P} without the presence of the critical term $u^{\2-1}$ and obtained the existence of solutions concentrating in a given set of local minima of $V$ as $\e\ri 0$.
He also related the number of positive solutions to the topology of the set where $V$ attains its minimum value. 
We point out that, in all the aforementioned articles, only equations with subcritical nonlinearities are considered.    

Motivated by the previous facts, in this paper we examine the existence of concentrating solutions to \eqref{P}, by assuming that the potential $V:\R^{N}\ri \R$ is a continuous function fulfilling the following conditions:
\begin{compactenum}[$(V_1)$]
\item there exists $V_{1}\in (0, m^{2s})$ such that $\displaystyle{-V_{1}:=\inf_{x\in \R^{N}}V(x)}$,
\item there exists a bounded open set $\Lambda\subset \R^{N}$ such that
$$
-V_{0}:=\inf_{x\in \Lambda} V(x)<\min_{x\in \partial \Lambda} V(x),
$$
with $V_{0}>0$, and $0\in M:=\{x\in \Lambda: V(x)=-V_{0}\}$,
\end{compactenum}
and that the nonlinearity $f:\R\ri \R$ is a continuous, $f(t)=0$ for $t\leq 0$, and satisfies the following  hypotheses:
\begin{compactenum}[$(f_1)$]
\item $\lim_{t\ri 0} \frac{f(t)}{t}=0$,
\item there exist $p, q\in (2, \2)$ and $\lambda>0$ such that 
\begin{align*}
f(t)\geq \lambda t^{p-1}, \mbox{ for all } t\geq 0, \, \mbox{ and } \lim_{t\ri \infty} \frac{f(t)}{t^{q-1}}=0,
\end{align*}
where $\lambda>0$ is such that
\begin{itemize}
\item $\lambda>0$ if either $N\geq 4s$, or $2s<N<4s$ and $\2-2<p<\2$,
\item $\lambda>0$ is sufficiently large if $2s<N<4s$ and $2<p\leq \2-2$,
\end{itemize}
\item there exists $\theta\in (2, q)$ such that $0<\theta F(t)\leq t f(t)$ for all $t>0$, where $F(t):=\int_{0}^{t} f(\tau)\, d\tau$,
\item the function $t\mapsto \frac{f(t)}{t}$ is increasing in $(0, \infty)$.
\end{compactenum}
The main result  of this paper can be stated as follows:
\begin{thm}\label{thm1}
Assume that $(V_1)$-$(V_2)$ and $(f_1)$-$(f_4)$ hold.
Then, for every small $\e>0$, there exists a solution $u_{\e}$ to \eqref{P} such that $u_{\e}$ has a maximum point $x_{\e}$ satisfying
$$
\lim_{\e\ri 0} {\rm dist}(\e x_{\e}, M)=0,
$$
and for which
$$
0<u_{\e}(x)\leq C_{1}e^{-C_{2}|x-x_{\e}|}, \quad \mbox{ for all } x\in \R^{N},
$$
for suitable constants $C_{1}, C_{2}>0$. Moreover, for each sequence $(\e_{n})$ with $\e_{n}\ri 0$, there exists a subsequence, still denoted by itself, such that there exist a point $x_{0}\in M$ with $\e_{n}x_{\e_{n}}\ri x_{0}$ and a positive ground state solution $u\in \h$ of the limiting problem:  
$$
(-\Delta+m^{2})^{s}u-V_{0} u=f(u)+u^{\2-1} \quad \mbox{ in } \R^{N},
$$
for which we have
$$
u_{\e_{n}}(x)=u(x-x_{\e_{n}})+\mathcal{R}_{n}(x),
$$
where $\displaystyle{\lim_{n\ri \infty} \|\mathcal{R}_{n}\|_{\h}=0}$.
\end{thm}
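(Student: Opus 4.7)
The plan is to adapt the classical penalization strategy of del Pino--Felmer to the fractional relativistic setting, using the extension method to turn the nonlocal operator $(-\Delta+m^{2})^{s}$ into a degenerate elliptic problem in the half-space $\R^{N+1}_{+}$ with a Neumann-type boundary condition carrying the nonlinearity. After rescaling $x\mapsto x/\e$, the problem becomes
$$
(-\Delta+m^{2}\e^{2s})^{s}u + V_{\e}(x) u = f(u)+u^{\2-1},\qquad V_{\e}(x):=V(\e x),
$$
and I would work with its natural extended energy $\J_{\e}$ defined on the space $\X$, whose trace on $\R^{N}$ belongs to $\h$.

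Because $(V_{1})$ allows $V$ to be sign-changing and the critical term destroys compactness, I would introduce a penalized nonlinearity $\tilde{f}_{\e}$ that coincides with $f(u)+u^{\2-1}+V(\e x)u$ inside $\Lambda_{\e}:=\Lambda/\e$ but is truncated linearly outside so that the Ambrosetti--Rabinowitz-type inequality is globally restored and so that any critical point which stays small outside $\Lambda_{\e}$ is automatically a solution of the original problem \eqref{P}. The penalized functional $\tilde{\J}_{\e}$ satisfies the mountain pass geometry thanks to $(f_{1})$--$(f_{3})$; to get a nontrivial critical point I need to verify the local Palais--Smale condition below the critical threshold built from the fractional Sobolev constant $S_{*}$. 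This is where assumption $(f_{2})$ and the dichotomy on $\lambda$ enter: by testing the mountain pass path with a truncation/translation of the Aubin--Talenti instanton (extended to $\R^{N+1}_{+}$), I would show the minimax level stays strictly below $\frac{s}{N}S_{*}^{N/2s}$, separately treating $N\geq 4s$ and $2s<N<4s$ (in the latter case choosing $\lambda$ large to dominate the non-trivial error coming from the $u^{p}$ term, exactly as in Brezis--Nirenberg arguments).

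Once a mountain pass solution $u_{\e}$ of the penalized problem is produced, I would show that the associated concentration point lies near $M$. The argument goes by comparing $c_{\e}$ with the mountain pass level $c_{V_{0}}$ of the autonomous limit problem with potential $-V_{0}$ and critical term (whose existence and least-energy characterization must be established first, again using Brezis--Nirenberg/Talenti-profile testing), proving $c_{\e}\to c_{V_{0}}$. A standard profile decomposition for $(\e_{n})\to 0$ then yields, up to translations $y_{n}$ with $\e_{n}y_{n}\to x_{0}$, that $u_{\e_{n}}(\cdot+y_{n})\to u$ strongly in $\h$ to a positive least energy solution of the limiting equation, forcing $x_{0}\in M$. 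The maximum point $x_{\e}$ then lies a bounded distance (in the rescaled variable) from $y_{\e}$, and this yields $\mathrm{dist}(\e x_{\e},M)\to 0$ and the asymptotic expansion $u_{\e_{n}}=u(\cdot-x_{\e_{n}})+\mathcal{R}_{n}$.

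The final technical steps are to promote the solution of the penalized problem to a solution of \eqref{P} and to obtain the pointwise exponential decay. For the $L^{\infty}$ bound I would run a Moser iteration on the extended problem, using the trace inequality and the subcriticality of the penalized tail, then invoke uniform boundedness together with the concentration to show $u_{\e}$ decays uniformly outside shrinking neighborhoods of $x_{\e}$; this makes the penalization inactive and hence $u_{\e}$ solves \eqref{P}. Exponential decay follows by building a supersolution using the decay properties of the kernel of $(-\Delta+m^{2})^{s}$ (the Bessel function $K_{(N+2s)/2}$ in \eqref{FFdef} decays like $e^{-m|x|}$), invoking a weak maximum principle on the extended problem and a comparison argument. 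In my view the hardest single step is the \emph{mountain pass level estimate} below the critical threshold: the inhomogeneity of $(-\Delta+m^{2})^{s}$ (no exact scaling as for $(-\Delta)^{s}$) obstructs a direct use of the Talenti profile, forcing careful Bessel-kernel expansions to control the lower order terms produced by $m^{2s}$ and $V$, and this is what dictates the delicate hypothesis on $\lambda$ in $(f_{2})$.
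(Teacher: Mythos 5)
Your overall strategy correctly parallels the paper: the extension method converts $(-\Delta+m^2)^s$ to a degenerate problem in $\R^{N+1}_+$, a del Pino--Felmer penalization restores an Ambrosetti--Rabinowitz-type inequality, a mountain-pass critical point of the modified functional is found below a critical threshold, a concentration-compactness/Lions argument yields concentration at a minimum of $V$ in $\Lambda$, and Moser iteration plus a Bessel-kernel comparison gives uniform $L^\infty$ bounds, exponential decay and the passage from the penalized to the original problem. (As a minor aside, the preliminary rescaling $x\mapsto x/\e$ you introduce is unnecessary and the operator $(-\Delta+m^2\e^{2s})^s$ it would produce is not what arises: problem \eqref{P} already carries $V(\e x)$ with the unscaled $(-\Delta+m^2)^s$.)

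There is, however, a genuine gap in the mountain-pass level estimate, which is also the step you correctly identify as hardest. You claim it is enough to show $c_\e<\frac{s}{N}S_*^{N/2s}$. This threshold is too weak: the paper needs, and uses repeatedly, the strictly smaller bound $c_\e<\frac{s}{N}(\zeta S_*)^{N/2s}$ with $\zeta:=1-\frac{V_1}{m^{2s}}\left(1+\frac1\kappa\right)\in(0,1)$. The factor $\zeta$ arises because, under $(V_1)$, $V$ is permitted to dip to $-V_1>-m^{2s}$; the coercivity constant of the quadratic form $\|\cdot\|^2_{Y_\mu}$ (and of the penalized form) relative to $\|\cdot\|^2_{\x}$ is only $\zeta$, which degrades the trace/Sobolev inequality that feeds into the autonomous vanishing lemma (Lemma \ref{Lions2}) and the non-vanishing Lemma \ref{lem2.4AM}. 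With only $c<\frac{s}{N}S_*^{N/2s}$ those contradiction arguments do not close. To reach the sharper bound the paper does not test with the bare $s$-harmonic extension of the Talenti bubble, but with a function of the form $v_\p(x,y)=\vartheta(my)\,\eta_{\p,\beta}(x,y)$, where $\vartheta$ is the decaying Bessel profile solving \eqref{pks} and $\beta=\zeta^{-2/(N-2s)}$. The key identity, obtained by integrating by parts against the ODE for $\vartheta$, converts the half-space mass contribution $m^2\iint y^{1-2s}v_\p^2$ into the trace term $m^{2s}\int_{\R^N}v_\p^2(\cdot,0)\,dx$, which is then absorbed with the potential, and the $\beta$-rescaling produces the extra $\zeta^{N/2s}$ factor. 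Without this device the $m^2$-term does not cancel and one cannot land below $\frac{s}{N}(\zeta S_*)^{N/2s}$, so your plan as written would stall exactly at the step you flagged.
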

The proof of Theorem \ref{thm1} relies on appropriate variational techniques. 
Since the operator $(-\Delta+m^{2})^{s}$ is nonlocal, we transform \eqref{P} into a degenerate elliptic equation in a half-space with a nonlinear Neumann boundary condition via a variant of the extension method \cite{CS} (see \cite{CZN1, FF, StingaT}).
Then, we adapt the penalization approach in \cite{DF}, the so-called {\it local mountain pass}, 
by building a convenient modification of the energy functional associated with the extended problem in such a way that the corresponding modified energy functional $J_{\e}$ satisfies the hypotheses of the mountain pass theorem \cite{AR}, and then we prove that, for $\e>0$ sufficiently small, the trace of the associated mountain pass solution is, indeed, a solution to the original equation with the stated properties. 
The modification of the functional corresponds to a penalization outside $\Lambda$, and this is why no other global assumptions are required. 
With respect to \cite{ADCDS}, it is more difficult to obtain compactness for $J_{\e}$ due to the presence of the critical exponent. 
To overcome this obstacle, we first estimate from above the mountain pass level $c_{\e}$ of $J_{\e}$, by constructing a suitable cut-off function.
Roughly speaking, we choose a function, appropriately rescaled, of the type $v_{\p}(x, y)=\vartheta(m y) \phi(x, y) w_{\p}(x, y)$, with $\p>0$, where $\vartheta$ is expressed via the Bessel function $K_{s}$, $\phi$ is a smooth cut-off function, and $w_{\p}$ is the $s$-harmonic extension of the extremal function $u_{\p}$ for the fractional Sobolev inequality (see \cite{CT}), in such a way that the control of the quadratic term 
\begin{align*}
\iint_{\R^{N+1}_{+}} y^{1-2s} (|\nabla v_{\p}|^{2}+m^{2}v^{2}_{\p})\, dx dy-m^{2s} \int_{\R^{N}} v_{\p}^{2}(x, 0)\, dx
\end{align*}
is, in some sense, reduced to the control of the term:
\begin{align*}
\iint_{\R^{N+1}_{+}} y^{1-2s} |\nabla (\phi w_{\p})|^{2}\, dx dy,
\end{align*}
and thus, we are able to verify that $c_{\e}<c_{*}$, where the threshold value $c_{*}$ depends on the best constant $S_{*}$ for the critical Sobolev trace inequality (see \cite{BRCDPS}), and the constants $V_{1}$, $m^{2s}$, and $\theta$ (see Lemma \ref{UPPERB}).
In view of this bound and by establishing a concentration-compactness principle in the spirit of Lions \cite{Lions1, Lions2}, we show that the modified energy functional  satisfies the Palais-Smale condition in the range $(0, c_{*})$ (see Lemmas \ref{CCL} and \ref{lemma2}). 
Finally, we prove that, for $\e>0$ small enough, the solution of the auxiliary problem is, indeed, solution of the original one by combining a Moser iteration scheme \cite{Moser}, a comparison argument, and some crucial properties of the Bessel kernel \cite{ArS, stein} (see Lemmas \ref{moser} and \ref{lem2.6AM}).
As far as we know, this is the first time that the penalization method is used to study the concentration phenomena for a fractional relativistic Schr\"odinger equation with critical growth.

The structure of the paper is the following. In section $2$, we define some function spaces. In section $3$, we focus on the modified problem. In section $4$, we deal with the autonomous critical problems related to the extended modified problem. Section $5$ is devoted to the proof of Theorem \ref{thm1}. In section $6$, we discuss a multiplicity result for \eqref{P}.

\subsection*{Notations:}
We denote the upper half-space in $\R^{N}$ by $\R^{N+1}_{+}:=\left\{(x, y)\in \R^{N+1}: y>0\right\}$. 
For $(x, y)\in \R^{N+1}_{+}$, we set $|(x, y)|:=\sqrt{|x|^{2}+y^{2}}$.
The letters $c$, $C$, $C'$, and $C_{i}$ will be repeatedly used to denote various positive constants whose exact values are irrelevant and can change from line to line. 
For $x\in \R^{N}$ and $R>0$, we will denote by $B_{R}(x)$ the ball in $\R^{N}$ centered at $x\in \R^{N}$ with radius $r>0$. When $x=0$, we set $B_{R}:=B_{R}(0)$.
For $(x_{0}, y_{0})\in \R^{N+1}_{+}$ and $R>0$, we put $B^{+}_{R}(x_{0}, y_{0}):=\{(x, y)\in \mathbb{R}^{N+1}_{+}: |(x,y)-(x_{0}, y_{0})|<R\}$ and 
$B^{+}_{R}:=B^{+}_{R}(0, 0)$. Let $p\in [1, \infty]$ and $A\subset \R^{N}$ be a measurable set.  
The notation $A^{c}:=\R^{N}\setminus A$ stands for the complement of $A$ in $\R^{N}$. 
We will use $|u|_{L^{p}(A)}$ for the $L^{p}$-norm of $u: \R^{N}\ri \R$. If $A=\R^{N}$, we simply write $|u|_{p}$ instead of $|u|_{L^{p}(\R^{N})}$. With $\|v\|_{L^{p}(\R^{N+1}_{+})}$ we denote the norm of $v\in L^{p}(\R^{N+1}_{+})$. For a generic real-valued function $w$, we set $w^{+}:=\max\{w, 0\}$ and $w^{-}:=\min\{w, 0\}$.

\section{Function spaces}

Let $H^{s}(\R^{N})$ be the fractional Sobolev space defined as the completion of $C^{\infty}_{c}(\R^{N})$ with respect to the norm
$$
|u|_{H^{s}(\R^{N})}:=\left( \int_{\R^{N}} (|k|^{2}+m^{2})^{s} |\mathcal{F}u(k)|^{2} dk \right)^{\frac{1}{2}}.
$$
Then, $H^{s}(\R^{N})$ is continuously embedded in $L^{p}(\R^{N})$ for all $p\in [2, \2]$ and compactly in  $L^{p}_{loc}(\R^{N})$ for all $p\in [1, \2)$; see \cite{Adams, AmbrosioBOOK, AJDE, DPV, LL}. 
We denote by $X^{s}(\R^{N+1}_{+})$ the completion of $C^{\infty}_{c}(\overline{\R^{N+1}_{+}})$ with respect to the norm:
$$
\|v\|_{X^{s}(\R^{N+1}_{+})}:=  \left(\iint_{\R^{N+1}_{+}} y^{1-2s} (|\nabla v|^{2}+m^{2}v^{2})\, dx dy  \right)^{\frac{1}{2}}.
$$
By \cite[Lemma 3.1]{FF} (see also \cite[Proposition 3.1.1]{DMV}), it follows that $\x$ is continuously embedded in $L^{2\gamma}(\R^{N+1}_{+}, y^{1-2s})$, i.e.,
\begin{equation}\label{weightedE}
\|v\|_{L^{2\gamma}(\R^{N+1}_{+}, y^{1-2s})}\leq \hat{S} \|v\|_{\x}, \quad \mbox{ for all } v\in \x,
\end{equation}
for some $\hat{S}>0$, where $\gamma:=1+\frac{2}{N-2s}$, and $L^{r}(\R^{N+1}_{+}, y^{1-2s})$ is the weighted Lebesgue space, with $r\in (1, \infty)$, endowed with the norm:
$$
\|v\|_{L^{r}(\R^{N+1}_{+}, y^{1-2s})}:=\left(\iint_{\R^{N+1}_{+}} y^{1-2s} |v|^{r}\, dx dy\right)^{\frac{1}{r}}.
$$
In light of \cite[Lemma 3.1.2]{DMV}, we also know that $\x$ is compactly embedded in $L^{2}(B_{R}^{+}, y^{1-2s})$ for all $R>0$. 
By \cite[Proposition 5]{FF}, there exists a (unique) linear trace operator ${\rm Tr}: \x\ri \h$ such that 
\begin{equation}\label{traceineq}
\sqrt{\sigma_{s}} |{\rm Tr}(v)|_{H^{s}(\R^{N})}\leq \|v\|_{\x}, \quad \mbox{ for all } v\in \x.
\end{equation}
where $\sigma_{s}:=2^{1-2s}\Gamma(1-s)/\Gamma(s)$; see \cite{BRCDPS, StingaT}.
In order to lighten the notation, we will denote ${\rm Tr}(v)$ by $v(\cdot, 0)$.
We note that \eqref{traceineq} yields
\begin{equation}\label{m-ineq}
\sigma_{s}m^{2s} \int_{\R^{N}} v^{2}(x, 0)\, dx\leq \iint_{\R^{N+1}_{+}} y^{1-2s} (|\nabla v|^{2}+m^{2}v^{2})\, dx dy,
\end{equation}
for all $v\in \x$, which can be also written as:
\begin{align}\label{Young}
\sigma_{s}\int_{\R^{N}} v^{2}(x, 0)\, dx\leq m^{-2s} \iint_{\R^{N+1}_{+}} y^{1-2s} |\nabla v|^{2}\, dx dy+m^{2-2s} \iint_{\R^{N+1}_{+}} y^{1-2s} v^{2}\, dx dy.
\end{align}
From the previous facts, we deduce the following fundamental embeddings.
\begin{thm}\label{Sembedding}
${\rm Tr}(X^{s}(\R^{N+1}_{+}))$ is continuously embedded in $L^{r}(\R^{N})$ for all $r\in [2, \2]$ and compactly embedded in $L^{r}_{loc}(\R^{N})$ for all $r\in [1, \2)$.
\end{thm}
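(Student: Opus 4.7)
The plan is to combine the two ingredients already available in this section. On one hand, \eqref{traceineq} provides a continuous linear trace operator ${\rm Tr}:\x\ri \h$ with $\sqrt{\sigma_{s}}\,|{\rm Tr}(v)|_{H^{s}(\R^{N})}\leq \|v\|_{\x}$. On the other hand, at the beginning of this section we have already recalled that $\h$ is continuously embedded in $L^{r}(\R^{N})$ for every $r\in [2,\2)$ and compactly embedded in $L^{r}_{loc}(\R^{N})$ for every $r\in [1,\2)$. Both conclusions of the theorem are then obtained by composing these two ingredients, so the only point that requires attention is the uniformity of the resulting constants, which is automatic from the linearity of everything in sight.

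For the continuous embedding, fix $r\in [2, \2)$ and $v\in \x$. Chaining \eqref{traceineq} with the continuous Sobolev embedding $\h\hookrightarrow L^{r}(\R^{N})$ yields a constant $C_{r}>0$ (independent of $v$) such that
\[
|v(\cdot,0)|_{r} \leq C_{r}\, |v(\cdot,0)|_{H^{s}(\R^{N})} \leq \frac{C_{r}}{\sqrt{\sigma_{s}}}\, \|v\|_{\x},
\]
which is precisely the asserted continuous embedding ${\rm Tr}(\x)\hookrightarrow L^{r}(\R^{N})$.

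For the compact embedding, let $(v_{n})\subset \x$ be a bounded sequence and fix $r\in [1, \2)$. By \eqref{traceineq} the traces $u_{n}:=v_{n}(\cdot,0)$ form a bounded sequence in $\h$. The compact embedding $\h\hookrightarrow L^{r}_{loc}(\R^{N})$ then produces, for each fixed $R>0$, a subsequence converging in $L^{r}(B_{R})$. A standard Cantor diagonal argument over an increasing sequence of radii $R_{k}\ri \infty$ extracts a single subsequence converging in $L^{r}_{loc}(\R^{N})$. Since the entire argument reduces to a two-step composition of known facts, no genuine obstacle arises.
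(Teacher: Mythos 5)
Your proof is correct and is essentially the one the paper intends: the theorem is stated immediately after the two ingredients you use (the bounded trace operator ${\rm Tr}:\x\to\h$ from \eqref{traceineq} and the known embeddings $\h\hookrightarrow L^{r}(\R^{N})$ for $r\in[2,\2)$, compact into $L^{r}_{loc}(\R^{N})$ for $r\in[1,\2)$), with the phrase ``From the previous facts we deduce'' signalling exactly this two-step composition. Your explicit chaining of constants and the diagonal-extraction remark for local compactness fill in the routine details the paper leaves implicit.
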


\noindent
To deal with \eqref{P} via variational methods, we use a variant of the extension method \cite{CS} given in \cite{CZN1, FF, StingaT}. 
More precisely, for each $u\in \h$, there exists a unique function $U\in \x$ solving the  problem
\begin{align*}
\left\{
\begin{array}{ll}
-\dive(y^{1-2s} \nabla U)+m^{2}y^{1-2s}U=0 &\mbox{ in } \R^{N+1}_{+}, \\
U=u &\mbox{ on } \partial \R^{N+1}_{+}. 
\end{array}
\right.
\end{align*}
The function $U$ is called the extension of $u$ and fulfills the following properties:
\begin{compactenum}[$(E1)$]
\item
$$
\frac{\partial U}{\partial \nu^{1-2s}}:=-\lim_{y\ri 0} y^{1-2s} \frac{\partial U}{\partial y}(x,y)=\sigma_{s}(-\Delta+m^{2})^{s}u(x) \quad \mbox{ in } H^{-s}(\R^{N}),
$$
where $H^{-s}(\R^{N})$ denotes the dual of $H^{s}(\R^{N})$.
\item $\sqrt{\sigma_{s}}|u|_{\h}=\|U\|_{\x}\leq \|V\|_{\x}$ for all $V\in \x$ such that $V(\cdot,0)=u$,
\item if $u\in \mathcal{S}(\R^{N})$, then $U\in C^{\infty}(\R^{N+1}_{+})\cap C(\overline{\R^{N+1}_{+}})$, and it can be expressed as:
$$
U(x, y)=\int_{\R^{N}} P_{s, m}(x-z,y) u(z)\, dz,
$$
with
$$
P_{s, m}(x,y):=c'_{N,s} y^{2s} m^{\frac{N+2s}{2}} |(x, y)|^{-\frac{N+2s}{2}} K_{\frac{N+2s}{2}}(m|(x, y)|)
$$
and 
$$
c'_{N, s}:=p_{N, s}\frac{2^{\frac{N+2s}{2}-1}}{\Gamma(\frac{N+2s}{2})},
$$
where ${\displaystyle{p_{N, s}:=\pi^{-\frac{N}{2}}\frac{\Gamma(\frac{N+2s}{2})}{\Gamma(s)}}}$ is the constant for the (normalized) Poisson kernel with $m=0$  (see \cite{StingaT}).
\end{compactenum}

\begin{remark}
We recall (see \cite{FF}) that $P_{s, m}$ is the Fourier transform of $k\mapsto \vartheta(\sqrt{|k|^{2}+m^{2}})$ and that
\begin{align}\label{Nkernel}
\int_{\R^{N}} P_{s, m}(x, y)\, dx=\vartheta(m y),
\end{align}
where
\begin{align}\label{vartheta}
\vartheta(r):=\frac{2}{\Gamma(s)} \left(\frac{r}{2}\right)^{s}K_{s}(r)
\end{align}
belongs to $H^{1}(\R_{+}, y^{1-2s})$ and solves the following ordinary differential equation:
\begin{align}\label{pks}
\left\{
\begin{array}{ll}
\vartheta''+\frac{1-2s}{y} \vartheta'-\vartheta=0 \quad \mbox{ in } \R_{+}, \\
\vartheta(0)=1, \quad  \lim_{y\ri \infty} \vartheta(y)=0. 
\end{array}
\right.
\end{align}
We also have
\begin{align}\label{limks}
\int_{0}^{\infty} y^{1-2s}(|\vartheta'(y)|^{2}+|\vartheta(y)|^{2})\, dy=-\lim_{y\ri 0} y^{1-2s}\vartheta'(y)=\kappa_{s}.
\end{align}
\end{remark}

\noindent
Consequently, \eqref{P} can be realized in a local manner through the following nonlinear boundary value problem:
\begin{align}\label{EP}
\left\{
\begin{array}{ll}
-\dive(y^{1-2s} \nabla v)+m^{2}y^{1-2s}v=0 &\mbox{ in } \R^{N+1}_{+}, \\
\frac{\partial v}{\partial \nu^{1-2s}}=\sigma_{s} [-V_{\e} v(\cdot, 0)+f(v(\cdot, 0))+(v^{+}(\cdot, 0))^{\2-1}] &\mbox{ on } \R^{N}, 
\end{array}
\right.
\end{align}
where $V_{\e}(x):=V(\e x)$. For simplicity of notation, we will drop the constant $\sigma_{s}$ from the second equation in \eqref{EP}.
In order to examine \eqref{EP}, for $\e>0$, we introduce the space
$$
X_{\e}:=\left\{v\in X^{s}(\R^{N+1}_{+}): \int_{\R^{N}} V_{\e}(x) v^{2}(x, 0)\, dx<\infty\right\}
$$
equipped with the norm:
$$
\|v\|_{\e}:=\left(\|v\|^{2}_{\x}+ \int_{\R^{N}} V_{\e}(x) v^{2}(x, 0)\, dx \right)^{\frac{1}{2}}.
$$
Clearly, $X_{\e}\subset X^{s}(\R^{N+1}_{+})$, and using \eqref{m-ineq} and $(V_1)$, we see that
\begin{equation}\label{equivalent}
\|v\|_{X^{s}(\R^{N+1}_{+})}^{2}\leq \left(\frac{m^{2s}}{m^{2s}-V_{1}}\right) \|v\|^{2}_{\e}, \quad \mbox{ for all } v\in X_{\e}.
\end{equation}
Furthermore, $X_{\e}$ is a Hilbert space endowed with the inner product:
\begin{align*}
\langle v, w\rangle_{\e}:=\iint_{\R^{N+1}_{+}} y^{1-2s}(\nabla v\cdot\nabla w+m^{2}vw)\, dx dy+\int_{\R^{N}} V_{\e}(x) v(x, 0)w(x, 0)\, dx, \quad \mbox{ for all } v, w\in X_{\e}.
\end{align*}
Henceforth, with $X_{\e}^{*}$, we will denote the dual space of $X_{\e}$.

\section{Penalization argument}  
To study \eqref{EP}, we adapt the penalization approach in \cite{DF} (see also \cite{ADCDS}). 
Fix $\kappa>\max\{\frac{V_{1}}{m^{2s}-V_{1}}, \frac{\theta}{\theta-2}\}> 1$ and $a>0$ such that $f(a)+a^{\2-1}=\frac{V_{1}}{\kappa}a$. Define 
\begin{equation*}
\tilde{f}(t):=\left\{
\begin{array}{ll}
f(t)+(t^{+})^{\2-1}, &\mbox{ for } t<a, \\
\frac{V_{1}}{\kappa} t,  &\mbox{ for } t\geq a,
\end{array}
\right.
\end{equation*}
and
$$
g(x, t):=\chi_{\Lambda}(x) f(t)+(1-\chi_{\Lambda}(x)) \tilde{f}(t), \quad \mbox{ for } (x,t)\in \R^{N}\times \R,
$$
where $\chi_{\Lambda}$ denotes the characteristic function of $\Lambda$. Set $G(x,t):=\int_{0}^{t} g(x, \tau)\, d\tau$.
By assumptions $(f_1)$-$(f_4)$, it is easy to prove that $g$ is a Carath\'eodory function satisfying the following properties:
\begin{compactenum}[$(g_1)$]
\item $\lim_{t\ri 0} \frac{g(x, t)}{t}=0$ uniformly in $x\in \R^{N}$,
\item $g(x, t)\leq f(t)+t^{\2-1}$ for all $x\in \R^{N}$, $t>0$,
\item $(i)$ $0< \theta G(x,t)\leq tg(x, t)$ for all $x\in \Lambda$ and $t>0$, or, $x\in \Lambda^{c}$ and $0<t\leq a$, \\
$(ii)$ $0\leq 2 G(x,t)\leq tg(x, t)\leq \frac{V_{1}}{\kappa} t^{2}$ for all $x\in \Lambda^{c}$ and $t>0$,
\item for each $x\in \Lambda$, the function $t\mapsto \frac{g(x, t)}{t}$ is increasing in $(0, \infty)$, and for each $x\in \Lambda^{c}$, the function $t\mapsto \frac{g(x, t)}{t}$ is increasing in $(0, a)$.
\end{compactenum}

\noindent
Let us introduce the following auxiliary problem:
\begin{align}\label{MEP}
\left\{
\begin{array}{ll}
-\dive(y^{1-2s} \nabla v)+m^{2}y^{1-2s}v=0 &\mbox{ in } \R^{N+1}_{+}, \\
\frac{\partial v}{\partial \nu^{1-2s}}=-V_{\e}  v(\cdot, 0)+g_{\e}(\cdot, v(\cdot, 0)) &\mbox{ on } \R^{N},
\end{array}
\right.
\end{align}
where $g_{\e}(x,t):=g(\e x, t)$. It is clear that if $v_{\e}$ is a positive solution of \eqref{MEP} satisfying $v_{\e}(x,0)< a$ for all $x\in \Lambda_{\e}^{c}$, where $\Lambda_{\e}:=\{x\in \R^{N}: \e x\in \Lambda\}$, then $v_{\e}$ is a positive solution of \eqref{EP}.
The energy functional associated with \eqref{MEP}  is defined by:
$$
J_{\e}(v):=\frac{1}{2}\|v\|^{2}_{\e}-\int_{\R^{N}} G_{\e}(x,v(x,0))\, dx, \quad \mbox{ for all } v\in X_{\e}.
$$
It is standard to check that $J_{\e}\in C^{1}(X_{\e}, \R)$ and that its differential is given by
\begin{align*}
\langle J'_{\e}(v), w\rangle=\langle v, w\rangle_{\e}-\int_{\R^{N}} g_{\e}(x, v(x, 0)) w(x, 0)\, dx \quad \mbox{ for all } v, w\in X_{\e}.
\end{align*}
Hence, the critical points of $J_{\e}$ correspond to the weak solutions of \eqref{MEP}. To seek these critical
points, we will apply suitable variational arguments. First, we show that $J_{\e}$ possesses the geometric assumptions of the mountain pass theorem \cite{AR}.
\begin{lem}\label{lemma1}
The functional $J_{\e}$ satisfies the following properties:
\begin{compactenum}[$(i)$]
\item $J_{\e}(0)=0$,
\item there exist $\alpha, \rho>0$ such that $J_{\e}(v)\geq \alpha$ for all $v\in X_{\e}$ such that $\|v\|_{\e}=\rho$,
\item there exists $\bar{v}\in X_{\e}$ such that $\|\bar{v}\|_{\e}>\rho$ and $J_{\e}(\bar{v})<0$.
\end{compactenum}
\end{lem}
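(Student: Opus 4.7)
The statement is the standard mountain-pass geometry verification; the only twist is the presence of the critical trace term, which one must accommodate in the estimate giving (ii). Part (i) is immediate from the definition since both the norm term and the integral of $G_\e$ vanish at $v=0$.

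For (ii), the plan is to turn $(g_1)$, $(g_2)$ together with $(f_2)$ into the pointwise bound
$$
g(x,t)\leq \eta\, t+C_\eta\, t^{q-1}+t^{\2-1}\qquad\text{for all }x\in \R^N,\ t\geq 0,
$$
with $\eta>0$ arbitrary, and hence
$$
G(x,t)\leq \frac{\eta}{2}t^{2}+\frac{C_\eta}{q}t^{q}+\frac{1}{\2}t^{\2}.
$$
Integrating against $v(\cdot,0)$, I would use the trace inequality \eqref{traceineq} combined with the fractional Sobolev embedding $\h\hookrightarrow L^{r}(\R^N)$ for $r\in[2,\2]$ (the subcritical range is furnished by Theorem \ref{Sembedding}, while the critical case requires the sharp trace embedding $\x\hookrightarrow L^{\2}(\R^N)$ via the constant $S_*$ mentioned in the introduction) and then pass from $\|\cdot\|_{\x}$ to $\|\cdot\|_{\e}$ through \eqref{equivalent}. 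This yields
$$
J_\e(v)\geq \Bigl(\tfrac12-C\eta\Bigr)\|v\|_{\e}^{2}-C_\eta\|v\|_{\e}^{q}-C\|v\|_{\e}^{\2}.
$$
Choosing $\eta$ so small that $\tfrac12-C\eta\geq\tfrac14$ and recalling $q,\2>2$, the right-hand side is bounded below by a fixed $\alpha>0$ on the sphere $\|v\|_{\e}=\rho$ provided $\rho$ is chosen small enough.

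For (iii), the plan is to exploit $(f_3)$. A standard integration of $\theta F(t)\leq tf(t)$ from some $t_{1}>0$ yields the familiar consequence
$$
F(t)\geq c_{1}t^{\theta}-c_{2}\qquad\text{for all }t\geq 0,
$$
for suitable $c_{1},c_{2}>0$. Since $\Lambda$ is a nonempty bounded open set, $\Lambda_\e$ is nonempty and open for every $\e>0$, so I can fix a nonzero nonnegative $\varphi_{0}\in C_c^{\infty}(\overline{\R^{N+1}_+})$ whose trace is supported in $\Lambda_\e$. On that support $g_\e(x,\cdot)=f(\cdot)$ and $G_\e(x,\cdot)=F(\cdot)$, so for $t>0$,
$$
J_\e(t\varphi_{0})\leq \frac{t^{2}}{2}\|\varphi_{0}\|_{\e}^{2}-c_{1}t^{\theta}\int_{\R^N}\varphi_{0}(x,0)^{\theta}\,dx+c_{2}|\supp\varphi_{0}(\cdot,0)|,
$$
which tends to $-\infty$ as $t\to\infty$ because $\theta>2$. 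Setting $\bar v:=t_{0}\varphi_{0}$ for any $t_{0}$ large enough that $\|\bar v\|_{\e}>\rho$ and $J_\e(\bar v)<0$ finishes the argument.

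There is no real obstacle here: the lemma is classical. The only mildly delicate point is the appearance of $|v(\cdot,0)|_{\2}^{\2}$ in (ii), which forces one to invoke the critical Sobolev trace embedding rather than Theorem \ref{Sembedding} (which covers only the subcritical range); but once this embedding is in hand, the smallness argument goes through verbatim because $\2>2$.
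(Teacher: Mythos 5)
Your proposal is correct and follows essentially the same route as the paper: growth bounds $|G_\e(x,t)|\lesssim \eta t^2 + t^{\2}$ (the paper subsumes the $t^q$ term into the critical one, you keep it separate — immaterial), then the trace inequality and Sobolev embedding together with \eqref{m-ineq}/\eqref{equivalent} to control $|v(\cdot,0)|_2$ and $|v(\cdot,0)|_{\2}$ by $\|v\|_\e$, a small choice of $\eta$ and $\rho$ for (ii), and the Ambrosetti--Rabinowitz consequence $F(t)\geq c_1 t^\theta - c_2$ tested on a nonnegative extension with trace in $\Lambda_\e$ for (iii). You are also right that Theorem \ref{Sembedding} as stated only covers $r<\2$ and that the critical trace bound needed in (ii) must come instead from \eqref{traceineq} combined with the sharp fractional Sobolev embedding (or from \eqref{BRANDLE}).
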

\begin{proof}
Condition $(i)$ is obvious. By $(f_1)$, $(f_2)$, $(g_1)$, and $(g_2)$, we see that for all $\eta>0$, there exists $C_{\eta}>0$ such that
\begin{equation}\label{growthg}
|g_{\e}(x, t)|\leq \eta |t|+C_{\eta}|t|^{\2-1}, \quad \mbox{ for } (x, t)\in \R^{N}\times \R,
\end{equation}
and
\begin{equation}\label{growthG}
|G_{\e}(x, t)|\leq \frac{\eta}{2} |t|^{2}+\frac{C_{\eta}}{\2}|t|^{\2}, \quad \mbox{ for } (x, t)\in \R^{N}\times \R.
\end{equation}
Pick $\eta\in (0, m^{2s}-V_{1})$. By \eqref{growthG}, \eqref{m-ineq}, \eqref{equivalent}, and using Theorem \ref{Sembedding}, we have
\begin{align*}
J_{\e}(v)&\geq \frac{1}{2} \|v\|^{2}_{\e}-\frac{\eta}{2} |v(\cdot, 0)|_{2}^{2}-\frac{C_{\eta}}{\2}  |v(\cdot, 0)|^{\2}_{\2} \\
&=\frac{1}{2} \|v\|^{2}_{\e}-\frac{\eta}{2m^{2s}} m^{2s} |v(\cdot, 0)|_{2}^{2}-\frac{C_{\eta}}{\2}  |v(\cdot, 0)|^{\2}_{\2} \\
&\geq \frac{1}{2}\|v\|^{2}_{\e}-\frac{\eta}{2m^{2s}} \|v\|^{2}_{\x}-C_{\eta}C\|v\|^{\2}_{\x} \\
&\geq \left(\frac{1}{2}-  \frac{\eta}{2(m^{2s}-V_{1})} \right) \|v\|^{2}_{\e}-C_{\eta}C' \|v\|^{\2}_{\e},
\end{align*}
from which we deduce that $(ii)$ is fulfilled. Finally, 
take $v_{0}\in C^{\infty}_{c}(\overline{\R^{N+1}_{+}})$ such that $v_{0}\geq 0$, $v_{0}\not\equiv 0$, and $\supp(v_{0}(\cdot, 0))\subset \Lambda_{\e}$. Then, for all $t>0$,
\begin{align*}
J_{\e}(tv_{0})&\leq \frac{t^{2}}{2} \|v_{0}\|^{2}_{\e}-\int_{\R^{N}} F(t v_{0}(x, 0))\, dx\\
&\leq \frac{t^{2}}{2} \|v_{0}\|^{2}_{\e}-C_{1} t^{\theta}\int_{\Lambda_{\e}} (v_{0}(x, 0))^{\theta}\, dx+C_{2} |\Lambda_{\e}|,
\end{align*}
where we have used $(g_{3})$. Since $\theta\in (2, \2)$, we obtain $J_{\e}(tv_{0})\ri -\infty$ as $t\ri \infty$.
\end{proof}

By Lemma \ref{lemma1} and invoking a variant of the mountain pass theorem without the Palais-Smale condition (see \cite[Theorem 2.9]{W}),
we can find a Palais-Smale sequence $(v_{n})\subset X_{\e}$ at the mountain pass level $c_{\e}$, i.e.,
\begin{align*}
J_{\e}(v_{n})\ri c_{\e} \quad \mbox{ and } \quad J'_{\e}(v_{n})\ri 0 \mbox{ in } X^{*}_{\e},
\end{align*}
as $n\ri \infty$, where
$$
c_{\e}:=\inf_{\gamma\in \Gamma_{\e}}\max_{t\in [0, 1]}J_{\e}(\gamma(t))
$$
and
$$
\Gamma_{\e}:=\{\gamma\in C([0, 1], X_{\e}): \gamma(0)=0, \, J_{\e}(\gamma(1))< 0\}.
$$
In view of the properties of $g$, it is easy to verify (see \cite{Rab, W}) that 
$$
c_{\e}=\inf_{v\in \mathcal{N}_{\e}} J_{\e}(v)=\inf_{v\in X_{\e}\setminus\{0\}} \max_{t\geq 0}J_{\e}(t v),
$$ 
where
$$
\mathcal{N}_{\e}:=\{v\in X_{\e}: \langle J'_{\e}(v), v\rangle=0\}
$$
is the Nehari manifold associated with $J_{\e}$.

\noindent
Next, we provide an important upper bound for the minimax level $c_{\e}$. For this purpose, we remember the following trace inequality (see \cite[Theorem 2.1]{BRCDPS}):
\begin{align}\label{BRANDLE}
\iint_{\R^{N+1}_{+}} y^{1-2s} |\nabla v|^{2}\, dx dy\geq S_{*} \left(\int_{\R^{N}} |v(x, 0)|^{\2}\, dx \right)^{\frac{2}{\2}},
\end{align}
for all $v\in X^{s}_{0}(\R^{N+1}_{+})$, where $X^{s}_{0}(\R^{N+1}_{+})$ is the completion of $C^{\infty}_{c}(\overline{\R^{N+1}_{+}})$ under the norm:
$$
\left(\iint_{\R^{N+1}_{+}}  y^{1-2s} |\nabla v|^{2}\, dx dy\right)^{\frac{1}{2}},
$$
and the exact value of the best constant $S_{*}=S(N, s)>0$ is
\begin{align*}
S_{*}:=\frac{2\pi^{s} \Gamma(1-s) \Gamma(\frac{N+2s}{2}) \Gamma(\frac{N}{2})^{\frac{2s}{N}}}{\Gamma(s) \Gamma(\frac{N-2s}{2}) \Gamma(N)^{\frac{2s}{N}}}.
\end{align*}
This constant is achieved on the family of functions $w_{\p}=\mathcal{E}_{s}(u_{\p})$, where $\mathcal{E}_{s}$ denotes the $s$-harmonic extension \cite{CS}, and
$$
u_{\p}(x):=\frac{\p^{\frac{N-2s}{2}}}{(|x|^{2}+\p^{2})^{\frac{N-2s}{2}}}, \quad \p>0;
$$  
see \cite{BRCDPS, CT, SV} for more details.
Hence,
\begin{align*}
w_{\p}(x, y):=(P_{s}(\cdot, y)*u_{\p})(x)=\int_{\R^{N}} P_{s}(x-\xi, y)u_{\p}(\xi)\, d\xi,
\end{align*}
where 
\begin{align*}
P_{s}(x, y):=\frac{p_{N, s}\, y^{2s}}{(|x|^{2}+y^{2})^{\frac{N+2s}{2}}}
\end{align*}
is the Poisson kernel for the extension problem in the half-space $\R^{N+1}_{+}$.
Note that $w_{\p}(x, y)=\p^{\frac{2s-N}{2}} w_{1}(\frac{x}{\p}, \frac{y}{\p})$.
\begin{lem}\label{UPPERB}
It holds $0<c_{\e}<\frac{s}{N}(\zeta S_{*})^{\frac{N}{2s}}$, where $\zeta:=1-\frac{V_{1}}{m^{2s}}\left(1+\frac{1}{\kappa}\right)\in (0, 1)$.
\end{lem}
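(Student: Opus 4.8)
The goal is the strict inequality $c_\e < \frac{s}{N}(\zeta S_*)^{N/2s}$, where $\zeta = 1 - \frac{V_1}{m^{2s}}(1+\frac1\kappa)$. Since $c_\e = \inf_{v\neq 0}\max_{t\geq 0} J_\e(tv)$, it suffices to exhibit one test function $v$ supported (at the trace level) inside $\Lambda_\e$ for which $\max_{t\geq 0} J_\e(tv)$ is strictly below the threshold. The natural candidate, following the sketch in the introduction, is a rescaled truncation of the extremal extension: $v_\p(x,y) = \vartheta(my)\,\phi(x,y)\,w_\p(x,y)$, where $\vartheta$ is the Bessel profile from \eqref{vartheta}, $\phi$ is a fixed smooth cutoff equal to $1$ near the origin with support in a small half-ball contained in $\Lambda_\e$ (possible for $\e$ small since $0\in\Lambda$), and $w_\p = \mathcal{E}_s(u_\p)$ is the Aubin–Talenti extension achieving $S_*$ in \eqref{BRANDLE}, with $\p\to 0$.

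**Key steps.** First I would record the standard sharp asymptotics (from \cite{BRCDPS, CT}) for the truncated extremal: as $\p\to 0$,
\begin{align*}
\iint_{\R^{N+1}_+} y^{1-2s}|\nabla(\phi w_\p)|^2\,dxdy &= S_*^{N/2s} + O(\p^{N-2s}),\\
\int_{\R^N}|(\phi w_\p)(x,0)|^{\2}\,dx &= S_*^{N/2s} + O(\p^N),
\end{align*}
together with the lower bound $\int_{\R^N}|(\phi w_\p)(x,0)|^p\,dx \geq c\,\p^{N-\frac{N-2s}{2}p}$ (up to a logarithmic factor in the borderline case) which is where the condition on $\lambda$ in $(f_2)$ enters. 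Second — the crucial algebraic reduction — I would expand the quadratic part of $J_\e(v_\p)$:
\begin{align*}
\|v_\p\|_\e^2 - m^{2s}\sigma_s\int_{\R^N} v_\p^2(x,0)\,dx
\end{align*}
and show, using that $\vartheta(0)=1$, the ODE \eqref{pks}, the identity \eqref{limks}, and integrating out the $y$-variable, that the "energy minus the trace mass" of the full $v_\p$ is controlled by $\iint y^{1-2s}|\nabla(\phi w_\p)|^2$ plus genuinely lower-order terms; simultaneously the potential term contributes at most $V_1\big(1+\frac1\kappa\big)$ times $\sigma_s\int v_\p^2(x,0)$ after using $(V_1)$ and the definition of $g$ outside $\Lambda$ — but since $\supp v_\p(\cdot,0)\subset\Lambda_\e$ this potential piece is just $\int_{\Lambda_\e}V_\e v_\p^2(x,0)$, bounded by $-V_0$ times the mass, which is $\le 0$; the factor $\zeta$ instead arises from comparing with the limiting autonomous problem at level $-V_0$ versus the universal bound, so I would be careful to track whether $\zeta$ comes from $V_1$ or $V_0$ and phrase the estimate accordingly. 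Third, I would carry out the one-dimensional maximization $t\mapsto J_\e(tv_\p)$: writing $A_\p = \iint y^{1-2s}(|\nabla v_\p|^2+m^2v_\p^2) - m^{2s}\sigma_s\int v_\p^2(\cdot,0) + \int V_\e v_\p^2(\cdot,0)$ for the relevant quadratic coefficient and $B_\p$ for the $\2$-coefficient, the maximum of $\frac{t^2}{2}A_\p - \frac{t^{\2}}{\2}B_\p$ equals $\frac{s}{N}\big(A_\p/B_\p^{2/\2}\big)^{N/2s}$; subtracting the nonnegative contribution $\int F(tv_\p(\cdot,0)) \ge \lambda\, c\, t^p \p^{\dots}/p$ coming from $(f_2)$ then pushes the level strictly below $\frac{s}{N}(\zeta S_*)^{N/2s}$ once $\p$ is chosen small enough, provided $\lambda$ is large enough in the bad range $2s<N<4s$, $2<p\le\2-2$, and otherwise automatically since the critical remainders $O(\p^{N-2s})$ beat the subcritical gain in dimension. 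The positivity $c_\e>0$ is immediate from Lemma \ref{lemma1}(ii).

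**Main obstacle.** The delicate point is the second step: showing that the cross terms and the $m^2$-zeroth-order term generated by differentiating $\vartheta(my)\phi(x,y)w_\p(x,y)$ do not spoil the constant $S_*$ and only produce $O(\p^{N-2s})$ (or better) errors, while the potential/penalization terms contribute exactly the factor $\zeta = 1 - \frac{V_1}{m^{2s}}(1+\frac1\kappa)$ and nothing worse. Concretely one must show $\iint y^{1-2s}|\nabla v_\p|^2\,dxdy + m^2\iint y^{1-2s}v_\p^2\,dxdy - m^{2s}\sigma_s\int v_\p^2(\cdot,0)\,dx \le \kappa_s\,\sigma_s^{-1}\cdots$ — here the identities \eqref{Nkernel}–\eqref{limks} for $\vartheta$ are exactly what make the "bulk" $y$-integral collapse to the trace, and one exploits that $\vartheta$ decays exponentially so the tail where $\phi$ varies is harmless. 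I expect this is where \cite{ADCDS} and the present argument differ from the homogeneous fractional Laplacian case, since there is no scaling symmetry and the Bessel profile must be inserted by hand; the bookkeeping of which negative term ($-V_1$ from $(V_1)$, via \eqref{m-ineq}, versus the $\frac1\kappa$ coming from $\tilde f$ near infinity) yields precisely the coefficient $1+\frac1\kappa$ is the part requiring the most care.
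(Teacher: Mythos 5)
Your plan correctly identifies the $\vartheta$--trick (equation \eqref{J1}: integrating out the Bessel profile collapses the bulk $y^{1-2s}(|\nabla v_\p|^2+m^2v_\p^2)$ integral into a pure trace term plus $\iint y^{1-2s}\vartheta^2|\nabla(\phi w_\p)|^2$), but it is missing the device that actually produces the constant $\zeta$, and you yourself flag your uncertainty about where $\zeta$ comes from. In the paper the test function is \emph{not} $\vartheta(my)\phi w_\p$; it is $\vartheta(my)\,\eta_{\p,\beta}(x,y)$ where $\eta_\p$ is the $L^{\2}$-normalized truncated extremal and $\eta_{\p,\beta}(x,y):=\eta_\p(\beta x,\beta y)$ with the deliberate choice $\beta:=\zeta^{-2/(N-2s)}$. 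This dilation in the $(x,y)$ variables causes the gradient piece of the quadratic coefficient to scale as $\beta^{2s-N}$ while the $\2$-coefficient scales as $\beta^{-N}$, and substituting into the one-dimensional max formula $\frac{s}{N}A^{N/2s}B^{-(N-2s)/2s}$ yields precisely $\zeta^{N/2s}S_*^{N/2s}$ at leading order. The factor $\zeta$ is not delivered by the potential term, nor by "comparing with the autonomous problem at $-V_0$": since $|\eta_\p(\cdot,0)|_2^2\to 0$ by \eqref{HZ2}, the potential contribution is an $O(\p^{\dots})$ remainder.

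Without the $\beta$-rescaling your $A_\p/B_\p^{2/\2}$ tends to $S_*$, so your bound only gives $\max_t J_\e(tv_\p)\le\frac{s}{N}S_*^{N/2s}+o(1)-O(\lambda\p^{\dots})$. This does not prove $c_\e<\frac{s}{N}(\zeta S_*)^{N/2s}$: the gap $\frac{s}{N}S_*^{N/2s}(1-\zeta^{N/2s})$ is a fixed positive constant, while the subcritical subtraction from $(f_2)$ vanishes as $\p\to 0$ (and $\lambda$ is a fixed constant for $N\ge 4s$, so cannot be used to compensate there). Put differently, your third step overshoots the threshold by a definite amount and the correction is infinitesimally small. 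To fix it you must build $\zeta$ directly into the test function via the dilation $\beta$, and then run exactly the asymptotics \eqref{HZ1}--\eqref{HZ3} on $\eta_{\p,\beta}$ as you outlined.
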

\begin{proof}
For simplicity, we assume that $\e=1$. 
Let $\phi(x, y):=\phi_{0}(|(x, y)|)$, where $\phi_{0}\in C^{\infty}([0, \infty))$ is a non-increasing function such that
\begin{align*}
\phi_{0}(t)=1 \, \mbox{ if } t\in [0, 1], \quad \phi_{0}(t)=0 \,\mbox{ if } t\geq 2,
\end{align*}
and suppose that $B_{2}\subset \Lambda$.
Then, we consider 
\begin{align*}
\eta_{\p}(x, y):=\frac{(\phi w_{\p})(x, y)}{|(\phi w_{\p})(\cdot, 0)|_{\2}}, \quad \p>0.
\end{align*}
Let us recall the following useful estimates (see \cite{AmbrosioBOOK, BACDPS, SV}):
\begin{align}\label{HZ1}
\iint_{\R^{N+1}_{+}} y^{1-2s}|\nabla \eta_{\p}(x, y)|^{2}\, dx dy=S_{*}+O(\p^{N-2s}),
\end{align}
\begin{equation}\label{HZ2}
|\eta_{\p}(\cdot, 0)|_{2}^{2}=
\left\{
\begin{array}{ll}
O(\p^{2s}), &\mbox{ if } N>4s, \\
O(\p^{2s}\log(1/\p)), &\mbox{ if } N=4s,\\
O(\p^{N-2s}), &\mbox{ if } 2s<N<4s, \\
\end{array}
\right.
\end{equation}
\begin{equation}\label{HZ3}
|\eta_{\p}(\cdot, 0)|_{q}^{q}\geq
\left\{
\begin{array}{ll}
O(\p^{\frac{2N-(N-2s)q}{2}}), &\mbox{ if } q>\frac{N}{N-2s}, \\
O(\p^{\frac{N}{2}}|\log(\p)|), &\mbox{ if } q=\frac{N}{N-2s}, \\
O(\p^{\frac{(N-2s)q}{2}}), &\mbox{ if } q<\frac{N}{N-2s}.
\end{array}
\right.
\end{equation}
Now, we define $v_{\p}(x, y):=\vartheta(my) \eta_{\p, \beta}(x, y)$, where $\eta_{\p, \beta}(x, y)=\eta_{\p}(\beta x, \beta y)$, $\beta:=\zeta^{-\frac{2}{N-2s}}$, and $\vartheta$ is given in \eqref{vartheta}. Evidently, $v_{\p}(x, 0)=\eta_{\p, \beta}(x, 0)$ and
\begin{align*}
\partial_{x_{j}} v_{\p}(x, y)=\beta \vartheta(m y) \partial_{x_{j}} \eta_{\p, \beta}(x, y), \,\,  \mbox{ for all } j=1, \dots, N,
\end{align*}
\begin{align*}
\partial_{y} v_{\p}(x, y)=m\vartheta'(m y) \eta_{\p, \beta}(x, y)+\beta \vartheta(m y) \partial_{y} \eta_{\p, \beta}(x, y). 
\end{align*}
Therefore,
\begin{align}\label{J1}
\|v_{\p}\|^{2}_{X^{s}(\R^{N+1}_{+})}&= \iint_{\R^{N+1}_{+}} y^{1-2s} (|\nabla v_{\p}|^{2}+m^{2}v^{2}_{\p})\, dx dy \nonumber\\
&=\iint_{\R^{N+1}_{+}} y^{1-2s} \left[ \vartheta^{2}(m y) (\beta^{2} |\nabla \eta_{\p, \beta}|^{2}+m^{2}\eta^{2}_{\p, \beta})+2m\beta \vartheta'(m y) \vartheta(m y) \eta_{\p, \beta} \partial_{y} \eta_{\p, \beta}+m^{2}(\vartheta'(m y))^{2}\eta^{2}_{\p, \beta} \right] \, dx dy \nonumber\\
&=\iint_{\R^{N+1}_{+}} y^{1-2s} \vartheta^{2}(m y)\beta^{2} |\nabla \eta_{\p, \beta}|^{2}\, dx dy+\iint_{\R^{N+1}_{+}} 2m\beta y^{1-2s}\vartheta'(m y) \vartheta(m y) \eta_{\p, \beta} \partial_{y} \eta_{\p, \beta}\, dx dy \nonumber\\
&\quad +\iint_{\R^{N+1}_{+}} y^{1-2s}  m^{2} \left[(\vartheta'(m y))^{2}+(\vartheta(m y))^{2}\right]\eta^{2}_{\p, \beta}  \, dx dy \nonumber\\
&=\iint_{\R^{N+1}_{+}} y^{1-2s} \vartheta^{2}(m y)\beta^{2} |\nabla \eta_{\p, \beta}|^{2}\, dx dy+m^{2s}\int_{\R^{N}} \eta^{2}_{\p, \beta}(x, 0)\, dx,
\end{align} 
where we have used integration by parts, $\kappa_{s}=1$, \eqref{pks}, and \eqref{limks} to deduce that
\begin{align*}
&\iint_{\R^{N+1}_{+}} 2 m\beta y^{1-2s}\vartheta'(m y) \vartheta(m y) \eta_{\p, \beta} \partial_{y} \eta_{\p, \beta}\, dx dy=\iint_{\R^{N+1}_{+}} m y^{1-2s}\vartheta'(m y) \vartheta(m y) \partial_{y}(\eta^{2}_{\p, \beta})\, dx dy \\
&=-\int_{\R^{N}} \left[ \lim_{y\ri 0} \vartheta(my) \vartheta'(my) my^{1-2s} \eta^{2}_{\p, \beta}(x, y) \right]\, dx -\iint_{\R^{N+1}_{+}}  m \eta^{2}_{\p, \beta} \partial_{y}[y^{1-2s} \vartheta(m y) \vartheta'(my)]\, dx dy\\
&=m^{2s}\kappa_{s} \int_{\R^{N}} \eta^{2}_{\p, \beta}(x, 0)\, dx-\iint_{\R^{N+1}_{+}} y^{1-2s}  m^{2} \left[(\vartheta'(m y))^{2}+(\vartheta(m y))^{2}\right]\eta^{2}_{\p, \beta}  \, dx dy \\
&=m^{2s} \int_{\R^{N}} \eta^{2}_{\p, \beta}(x, 0)\, dx-\iint_{\R^{N+1}_{+}} y^{1-2s}  m^{2} \left[(\vartheta'(m y))^{2}+(\vartheta(m y))^{2}\right]\eta^{2}_{\p, \beta}  \, dx dy.
\end{align*}
Consequently, using \eqref{J1}, a change of variable theorem, $|\eta_{\e}(\cdot, 0)|_{\2}=1$, $(g_{2})$, and $0\leq \vartheta(my)\leq 1$, for all $t>0$,
\begin{align}\label{DEDDY}
J_{1}(t v_{\p})&\leq \frac{t^{2}}{2} \|v_{\p}\|^{2}_{X^{s}(\R^{N+1}_{+})}+\frac{1}{2}\int_{\R^{N}} V(x) (tv_{\p}(x, 0))^{2}\, dx-\frac{\lambda t^{p}}{p} \int_{\R^{N}} |v_{\p}(x, 0)|^{p}\, dx -\frac{t^{\2}}{\2}   \int_{\R^{N}} |v_{\p}(x, 0)|^{\2}\, dx \nonumber \\
&\leq \frac{t^{2}}{2} \iint_{\R^{N+1}_{+}} y^{1-2s} \beta^{2} |\nabla \eta_{\p, \beta}|^{2}\, dx dy+(\bar{V}+m^{2s})\frac{t^{2}}{2}|\eta_{\p, \beta}(\cdot, 0)|^{2}_{2}-\frac{\lambda t^{p}}{p} |\eta_{\p, \beta}(\cdot, 0)|^{p}_{p}-\frac{t^{\2}}{\2}|\eta_{\p, \beta}(\cdot, 0)|_{\2}^{\2} \nonumber \\
&\leq \frac{t^{2}}{2} \left[ \beta^{2s-N}\iint_{\R^{N+1}_{+}} y^{1-2s}  |\nabla \eta_{\p}|^{2}\, dx dy+\beta^{-N}(\bar{V}+m^{2s})|\eta_{\p}(\cdot, 0)|^{2}_{2}\right]-\frac{\lambda t^{p}}{p}\beta^{-N} |\eta_{\p}(\cdot, 0)|^{p}_{p}-\frac{t^{\2}}{\2}\beta^{-N}=: \alpha(t), 
\end{align}
where $\bar{V}:=\max_{\bar{\Lambda}} V>-m^{2s}$. Clearly, $\alpha(t)>0$ for $t>0$ small and $\alpha(t)\ri -\infty$ as $t\ri \infty$, so $\alpha(t)$ attains  its maximum at some $t_{\p}>0$ with $\alpha'(t_{\p})=0$. 
This fact combined with\eqref{HZ1}-\eqref{HZ3} implies that there exist $\delta_{1}, \delta_{2}>0$, independent of $\p>0$, such that
\begin{align*}
\delta_{1}\leq t_{\p}\leq \delta_{2}.
\end{align*}
Then, observing that the function
\begin{align*}
t\mapsto \frac{t^{2}}{2}\left[ \beta^{2s-N}\iint_{\R^{N+1}_{+}} y^{1-2s}  |\nabla \eta_{\p}|^{2}\, dx dy+\beta^{-N}(\bar{V}+m^{2s})|\eta_{\p}(\cdot, 0)|^{2}_{2}\right]-\frac{t^{\2}}{\2}\beta^{-N}
\end{align*}
is increasing in the interval
\begin{align*}
\Biggl[0, \left(\beta^{2s-N}\iint_{\R^{N+1}_{+}} y^{1-2s}  |\nabla \eta_{\p}|^{2}\, dx dy+\beta^{-N}(\bar{V}+m^{2s})|\eta_{\p}(\cdot, 0)|^{2}_{2}\right)^{\frac{1}{\2-2}} \beta^{\frac{N}{\2-2}} \Biggr],
\end{align*}
and exploiting the well-known inequality
\begin{align*}
(a+b)^{\alpha}\leq a^{\alpha}+\alpha(a+b)^{\alpha-1}, \quad \mbox{ for } a, b>0, \, \alpha\geq 1,
\end{align*}
and the estimates \eqref{HZ1}-\eqref{HZ3}, we can deduce that
\begin{align*}
\alpha(t_{\p})&\leq \frac{s}{N} \left(\beta^{2s-N}\iint_{\R^{N+1}_{+}} y^{1-2s}  |\nabla \eta_{\p}|^{2}\, dx dy+\beta^{-N}(\bar{V}+m^{2s})|\eta_{\p}(\cdot, 0)|^{2}_{2}  \right)^{\frac{N}{2s}}\beta^{\frac{N(N-2s)}{4s}}-\lambda C|\eta_{\p}(\cdot, 0)|_{p}^{p} \\
&\leq \frac{s}{N}(\zeta S_{*})^{\frac{N}{2s}}+O(\p^{N-2s})+C|\eta_{\p}(\cdot, 0)|^{2}_{2}-\lambda C|\eta_{\p}(\cdot, 0)|_{p}^{p},
\end{align*}
where we have used 
\begin{align*}
\beta^{(2s-N)\frac{N}{2s}+\frac{N(N-2s)}{4s}}=\beta^{-\frac{N(N-2s)}{4s}}=\left(\zeta^{-\frac{2}{N-2s}}\right)^{-\frac{N(N-2s)}{4s}}=\zeta^{\frac{N}{2s}}.
\end{align*}
Next, we show that, for $\p>0$ small enough,
\begin{align}\label{ALEX}
\alpha(t_{\p})<\frac{s}{N}(\zeta S_{*})^{\frac{N}{2s}}.
\end{align} 
Assume that $N>4s$. Thus, $p>2>\frac{N}{N-2s}$, and using \eqref{HZ3}, we find
\begin{align*}
\alpha(t_{\p})\leq \frac{s}{N}(\zeta  S_{*})^{\frac{N}{2s}}+O(\p^{N-2s})+O(\p^{2s})-O(\p^{N-\frac{(N-2s)p}{2}}).
\end{align*}
Since
\begin{align*}
N-\frac{(N-2s)p}{2}<2s<N-2s,
\end{align*}
we infer that \eqref{ALEX} holds 
as long as $\p>0$ is sufficiently small.

If $N=4s$, then $p>2=\frac{N}{N-2s}$, and in view of \eqref{HZ3}, we obtain  
\begin{align*}
\alpha(t_{\p})\leq \frac{s}{N}(\zeta  S_{*})^{\frac{N}{2s}}+O(\p^{2s}(1+|\log \p|))-O(\p^{4s-sp}).
\end{align*}
Observing that $p>2$ yields 
\begin{align*}
\lim_{\p\ri 0} \frac{\p^{4s-sp}}{\p^{2s}(1+|\log \p|)}=\infty,
\end{align*}
we arrive at the assertion for $\p>0$ small enough.

Now, let us consider the case $2s<N<4s$. First, we suppose that $\2-2<p<\2$. Hence,
\begin{align*}
p>\2-2>\frac{N}{N-2s},
\end{align*}
which combined with \eqref{HZ3} gives 
\begin{align*}
\alpha(t_{\p})\leq \frac{s}{N}(\zeta  S_{*})^{\frac{N}{2s}}+O(\p^{N-2s})-O(\p^{N-\frac{(N-2s)p}{2}}).
\end{align*}
Because
\begin{align*}
N-\frac{(N-2s)p}{2}<N-2s<2s,
\end{align*}
we conclude that \eqref{ALEX} is satisfied for $\p>0$ small enough.
Second, we deal with the case $2<p\leq \2-2$. We distinguish the following subcases:
\begin{align*}
2<p<\frac{N}{N-2s}, \quad p=\frac{N}{N-2s}, \quad \mbox{ and } \quad \frac{N}{N-2s}<p\leq \2-2.
\end{align*}
If $2<p<\frac{N}{N-2s}$, by \eqref{HZ3}, we see that
\begin{align*}
\alpha(t_{\p})\leq \frac{s}{N}(\zeta  S_{*})^{\frac{N}{2s}}+O(\p^{N-2s})-\lambda O(\p^{\frac{(N-2s)p}{2}}),
\end{align*}
and noting that 
\begin{align*}
N-2s<\frac{(N-2s)p}{2},
\end{align*}
we can put $\lambda=\p^{-\nu}$, with $\nu>\frac{(N-2s)(p-2)}{2}$, to reach the required estimate.

If $p=\frac{N}{N-2s}$, then, thanks to \eqref{HZ3}, we have
\begin{align*}
\alpha(t_{\p})\leq \frac{s}{N}(\zeta S_{*})^{\frac{N}{2s}}+O(\p^{N-2s})-\lambda O(\p^{\frac{N}{2}}|\log \p|),
\end{align*}
and taking $\lambda=\p^{-\nu}$, with $\nu>2s-\frac{N}{2}$, we deduce the assertion for $\p>0$ small enough.

Finally, when $\frac{N}{N-2s}<p\leq \2-2$, it follows from \eqref{HZ3} that
\begin{align*}
\alpha(t_{\p})\leq \frac{s}{N}(\zeta S_{*})^{\frac{N}{2s}}+O(\p^{N-2s})-\lambda O(\p^{N-\frac{(N-2s)p}{2}})  ,
\end{align*}
and choosing $\lambda=\p^{-\nu}$, with $\nu>2s-\frac{(N-2s)p}{2}$, we obtain the claim for $\p>0$ sufficiently small.\\
Consequently, \eqref{DEDDY} and \eqref{ALEX} yield
\begin{align*}
\max_{t\geq 0} J_{1}(t v_{\p})<\frac{s}{N} (\zeta S_{*})^{\frac{N}{2s}},
\end{align*}
which together with $c_{1}\leq \max_{t\geq 0} J_{1}(t v_{\p})$ implies the desired conclusion.
\end{proof}

\noindent
In what follows, we show that $J_{\e}$ satisfies a local compactness condition. First, we prove the boundedness of Palais-Smale sequences of $J_{\e}$. 
\begin{lem}\label{boundPS}
Let $0<c<\frac{s}{N}(\zeta S_{*})^{\frac{N}{2s}}$ and $(v_{n})\subset X_{\e}$ be a Palais-Smale sequence of $J_{\e}$ at the level $c$. Then, $(v_{n})$ is bounded in $X_{\e}$.
\end{lem}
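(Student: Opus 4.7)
The plan is to apply a standard Ambrosetti--Rabinowitz--type argument, adapted to the penalization structure. Direct expansion gives
\begin{align*}
\theta J_{\e}(v_n) - \langle J'_{\e}(v_n), v_n\rangle = \left(\frac{\theta}{2} - 1\right) \|v_n\|_{\e}^2 + \int_{\R^N}\left[g_{\e}(x, v_n(x,0))\, v_n(x,0) - \theta G_{\e}(x, v_n(x,0))\right]\, dx,
\end{align*}
while the Palais--Smale hypotheses $J_{\e}(v_n) \to c$ and $\|J'_{\e}(v_n)\|_{X_{\e}^*} \to 0$ force the left-hand side to be bounded above by $\theta c + o(1) + o(1)\|v_n\|_{\e}$.

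To bound the integral from below, split $\R^N = \Lambda_{\e} \cup \Lambda_{\e}^c$. On $\Lambda_{\e}$ the integrand is nonnegative by $(g_3)(i)$. On $\Lambda_{\e}^c$ the Ambrosetti--Rabinowitz inequality no longer holds; instead $(g_3)(ii)$ yields $\theta G_{\e} \le (\theta/2)\, g_{\e} v_n$ and $g_{\e}(x, v_n)\, v_n \le (V_1/\kappa)\, v_n^2$, hence
\begin{align*}
g_{\e}(x, v_n(x,0))\, v_n(x,0) - \theta G_{\e}(x, v_n(x,0)) \geq -\left(\frac{\theta}{2}-1\right) \frac{V_1}{\kappa}\, v_n^2(x, 0) \quad \mbox{on } \Lambda_{\e}^c.
\end{align*}
Combining \eqref{m-ineq} with \eqref{equivalent} gives $\int_{\R^N} v_n^2(x, 0)\, dx \le (m^{2s} - V_1)^{-1} \|v_n\|_{\e}^2$, so the integral over $\R^N$ is bounded below by $-(\theta/2 - 1)\, V_1 [\kappa(m^{2s} - V_1)]^{-1} \|v_n\|_{\e}^2$.

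Putting everything together,
\begin{align*}
\theta c + o(1) + o(1)\|v_n\|_{\e} \geq \left(\frac{\theta}{2} - 1\right)\left[1 - \frac{V_1}{\kappa(m^{2s} - V_1)}\right]\|v_n\|_{\e}^2,
\end{align*}
and by the choice of $\kappa > V_1/(m^{2s} - V_1)$ the bracketed factor is strictly positive, yielding the boundedness of $(v_n)$ in $X_{\e}$. The main obstacle is precisely the breakdown of $(f_3)$ on $\Lambda_{\e}^c$, which is the reason the penalization $g$ was designed to obey the quadratic upper bound $t g(x, t) \leq (V_1/\kappa) t^2$ there; the freedom of choosing $\kappa$ large relative to $V_1/(m^{2s} - V_1)$ is exactly what is needed to absorb the loss. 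Note that the upper bound $c < (s/N)(\zeta S_*)^{N/2s}$ is not used here; it will become essential only in the subsequent Palais--Smale analysis to exclude concentration.
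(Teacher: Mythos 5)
Your proof is correct and follows essentially the same argument as the paper: the paper tests with $J_{\e}(v_n)-\tfrac{1}{\theta}\langle J'_{\e}(v_n),v_n\rangle$ rather than $\theta J_{\e}(v_n)-\langle J'_{\e}(v_n),v_n\rangle$, which differs only by the overall factor $\theta$, and then uses exactly the same splitting over $\Lambda_{\e}$ and $\Lambda_{\e}^{c}$ with $(g_3)$, the trace estimate \eqref{m-ineq} combined with \eqref{equivalent}, and the choice $\kappa>V_1/(m^{2s}-V_1)$.
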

\begin{proof}
By assumptions, we know that
\begin{align}\label{PSSQ}
J_{\e}(v_{n})\ri c \quad \mbox{ and } \quad J'_{\e}(v_{n})\ri 0 \mbox{ in } X^{*}_{\e},
\end{align}
as  $n\ri \infty$. 
Using \eqref{PSSQ}, $(g_3)$, \eqref{m-ineq}, and \eqref{equivalent}, we see that, for $n$ big enough, 
\begin{align*}
c+1+\|v_{n}\|_{\e}&\geq J_{\e}(v_{n})-\frac{1}{\theta}\langle J'_{\e}(v_{n}), v_{n}\rangle\\
&=\left(\frac{1}{2}-\frac{1}{\theta}\right) \|v_{n}\|^{2}_{\e}+\frac{1}{\theta} \int_{\R^{N}} g_{\e}(x, v_{n}(x, 0))v_{n}(x, 0)-\theta G_{\e}(x, v_{n}(x, 0))\, dx\\
&\geq \left(\frac{1}{2}-\frac{1}{\theta}\right) \|v_{n}\|^{2}_{\e}-\left(\frac{1}{2}-\frac{1}{\theta}\right) \frac{V_{1}}{\kappa} \int_{\R^{N}}  v_{n}^{2}(x, 0)\, dx \\
&= \left(\frac{1}{2}-\frac{1}{\theta}\right) \|v_{n}\|^{2}_{\e}-\left(\frac{1}{2}-\frac{1}{\theta}\right) \frac{V_{1}}{\kappa m^{2s}} m^{2s}\int_{\R^{N}}  v_{n}^{2}(x, 0)\, dx \\
&\geq \left(\frac{1}{2}-\frac{1}{\theta}\right) \|v_{n}\|^{2}_{\e}-\left(\frac{1}{2}-\frac{1}{\theta}\right) \frac{V_{1}}{\kappa m^{2s}}\|v_{n}\|^{2}_{\x} \\
&\geq \left(\frac{1}{2}-\frac{1}{\theta}\right) \left(1- \frac{V_{1}}{\kappa(m^{2s}-V_{1})}\right) \|v_{n}\|^{2}_{\e}.
\end{align*}
Since $\theta>2$ and $\kappa>\frac{V_{1}}{m^{2s}-V_{1}}$, we conclude that $(v_{n})$ is bounded in $X_{\e}$.  
\end{proof}

\begin{remark}\label{REMKINDERF}
If $0<c<\frac{s}{N}(\zeta S_{*})^{\frac{N}{2s}}$ and $(v_{n})\subset X_{\e}$ is a Palais-Smale sequence of $J_{\e}$ at the level $c$, then we may always assume that $(v_{n})$ is nonnegative. In fact, Lemma \ref{boundPS} implies that also $(v^{-}_{n})$ is bounded in $X_{\e}$. Then, we have $\langle J'_{\e}(v_{n}), v^{-}_{n}\rangle=o_{n}(1)$, which combined with $g(\cdot, t)=0$ for $t\leq 0$ yields $\|v^{-}_{n}\|_{\e}=o_{n}(1)$. Furthermore, it is easy to check that $J_{\e}(v_{n})=J_{\e}(v^{+}_{n})+o_{n}(1)$ and $J'_{\e}(v_{n})=J'_{\e}(v^{+}_{n})+o_{n}(1)$.
\end{remark}

\noindent
The next concentration-compactness principle in the spirit of Lions \cite{Lions1, Lions2} will be used in the proof of the local compactness of $J_{\e}$. We start by recalling some useful definitions. 
A sequence $(u_{n})\subset L^{1}(\R^{N})$ is tight if for every $\xi>0$, there exists $R>0$ such that
$$
\int_{B^{c}_{R}} |u_{n}|\, dx<\xi,  \quad \mbox{ for all } n\in \mathbb{N}.
$$
A sequence $(v_{n})\subset \x$ is tight if for every $\xi>0$, there exists $R>0$ such that 
$$
\iint_{\R^{N+1}_{+}\setminus B^{+}_{R}} y^{1-2s}(|\nabla v_{n}|^{2}+m^{2}v_{n}^{2})\, dx dy<\xi,  \quad \mbox{ for all } n\in \mathbb{N}.
$$
\begin{remark}
Let us observe that if $(v_{n})$ is a bounded tight sequence in $\x$, then $(|v_{n}(\cdot, 0)|^{\2})$ is a (bounded) tight sequence in $L^{1}(\R^{N})$. 
To prove this, let $R>0$ and consider $\eta_{R}\in C^{\infty}(\overline{\R^{N+1}_{+}})$ given by:
\begin{equation*}
\eta_{R}(x, y):=
\left\{
\begin{array}{ll}
0, &\mbox{ if } (x, y)\in B^{+}_{R/2}, \\
1, &\mbox{ if } (x, y)\in \overline{\R^{N+1}_{+}}\setminus B^{+}_{R},
\end{array}
\right.
\end{equation*}
with $0\leq \eta_{R}\leq 1$ and $\|\nabla \eta_{R}\|_{L^{\infty}(\R^{N+1}_{+})}\leq C/R$, for some $C>0$ independent of $R>0$. 
Using the definition and the properties of $\eta_{R}$, Theorem \ref{Sembedding}, the boundedness of $(v_{n})$ in $\x$, and $|x+y|^{2}\leq 2(|x|^{2}+|y|^{2})$ for all $x, y\in \R^{N}$, we have that, for all $n\in \mathbb{N}$,
\begin{align*}
&\left(\int_{B^{c}_{R}} |v_{n}(x, 0)|^{\2}\, dx  \right)^{\frac{2}{\2}} \leq \left(\int_{\R^{N}} |(v_{n}\eta_{R})(x, 0)|^{\2}\, dx  \right)^{\frac{2}{\2}} \\
&\leq C \left(\iint_{\R^{N+1}_{+}} y^{1-2s}(|\nabla(v_{n}\eta_{R})|^{2}+m^{2}v^{2}_{n}\eta_{R}^{2})\, dxdy\right)\\
&\leq C\left(\iint_{\R^{N+1}_{+}} y^{1-2s}(|\nabla v_{n}|^{2}+m^{2}v_{n}^{2})\eta^{2}_{R}\, dxdy+\iint_{\R^{N+1}_{+}} y^{1-2s}|\nabla \eta_{R}|^{2}v^{2}_{n}\, dxdy\right) \\
&\leq C\left(\iint_{\R^{N+1}_{+}\setminus B^{+}_{R/2}} y^{1-2s}(|\nabla v_{n}|^{2}+m^{2}v_{n}^{2})\, dxdy+\frac{C}{R^{2}}\right).
\end{align*}
From the aforementioned estimate and the tightness of $(v_{n})$ in $\x$, we derive that $(|v_{n}(\cdot, 0)|^{\2})$ is tight in $L^{1}(\R^{N})$, as desired. 
\end{remark}
\begin{prop}\label{CCL}
Let $(v_{n})$ be a bounded tight sequence in $\x$ such that $v_{n}\rightharpoonup v$ in $\x$. 
Let $\mu$ and $\nu$ be two bounded nonnegative measures on $\R^{N+1}_{+}$ and $\R^{N}$, respectively, and such that
\begin{align}
&y^{1-2s} (|\nabla v_{n}|^{2}+m^{2}v_{n}^{2})\rightharpoonup \mu \quad \mbox{ weakly in the sense of measures } \label{46FS1}
\end{align}
and
\begin{align}
&|v_{n}(\cdot, 0)|^{\2}\rightharpoonup \nu \quad \mbox{ weakly in the sense of measures. }  \label{46FS2}
\end{align}
Then, there exist an at most countable set $I$ and three families $(x_{i})_{i\in I}\subset \R^{N}$, $(\mu_{i})_{i\in I}\subset (0, \infty)$,  $(\nu_{i})_{i\in I}\subset (0, \infty)$ such that
\begin{align}
&\nu=|v(\cdot, 0)|^{\2}+\sum_{i\in I} \nu_{i} \delta_{x_{i}}, \label{47FS}\\
&\mu\geq y^{1-2s} (|\nabla v|^{2}+m^{2}v^{2})+\sum_{i\in I} \mu_{i} \delta_{(x_{i}, 0)}, \label{48FS} \\
&\mu_{i}\geq S_{*} \nu_{i}^{\frac{2}{2^{*}_{s}}}, \quad \mbox{ for all }  i\in I.  \label{49FS} 
\end{align}
\end{prop}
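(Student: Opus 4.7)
The proof will follow Lions' classical second concentration-compactness scheme, adapted to the extended weighted Sobolev setting. First I would reduce to the case $v=0$ by setting $w_{n}:=v_{n}-v$, so that $w_{n}\rightharpoonup 0$ in $\x$. The Brezis--Lieb lemma applied to the traces in $L^{\2}(\R^{N})$ yields $|v_{n}(\cdot,0)|^{\2}-|w_{n}(\cdot,0)|^{\2}-|v(\cdot,0)|^{\2}\to 0$ in $L^{1}(\R^{N})$, while the compact embedding $\x\hookrightarrow L^{2}(B_{R}^{+},y^{1-2s})$ from \cite{DMV} supplies $v_{n}\to v$ strongly in $L^{2}_{\mathrm{loc}}(\R^{N+1}_{+},y^{1-2s})$. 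Combining these with the orthogonality expansion $|\nabla v_{n}|^{2}=|\nabla v|^{2}+|\nabla w_{n}|^{2}+2\nabla v\cdot\nabla w_{n}$ (whose cross term vanishes in the sense of distributions by $\nabla w_{n}\rightharpoonup 0$ in $L^{2}(\R^{N+1}_{+},y^{1-2s})$), I obtain
\[
\nu=|v(\cdot,0)|^{\2}\,dx+\widetilde{\nu}, \qquad \mu=y^{1-2s}(|\nabla v|^{2}+m^{2}v^{2})\,dxdy+\widetilde{\mu},
\]
where $\widetilde{\nu},\widetilde{\mu}$ are the weak-$*$ limits of $|w_{n}(\cdot,0)|^{\2}\,dx$ and $y^{1-2s}|\nabla w_{n}|^{2}\,dxdy$, respectively. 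It therefore suffices to prove the reverse Sobolev inequality
\[
S_{*}\Bigl(\int_{\R^{N}}|\phi(x,0)|^{\2}\,d\widetilde{\nu}\Bigr)^{2/\2}\le\int_{\overline{\R^{N+1}_{+}}}\phi^{2}\,d\widetilde{\mu}\qquad \text{for all } \phi\in C_{c}^{\infty}(\overline{\R^{N+1}_{+}}).
\]

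To establish this, I would plug $\phi w_{n}$ into the trace inequality \eqref{BRANDLE} and expand
\[
\iint_{\R^{N+1}_{+}}y^{1-2s}|\nabla(\phi w_{n})|^{2}\,dxdy=\iint y^{1-2s}\phi^{2}|\nabla w_{n}|^{2}\,dxdy+\iint y^{1-2s}w_{n}^{2}|\nabla\phi|^{2}\,dxdy+2\iint y^{1-2s}\phi w_{n}\nabla\phi\cdot\nabla w_{n}\,dxdy.
\]
The second summand tends to $0$ since $\nabla\phi$ is compactly supported and $w_{n}\to 0$ in $L^{2}_{\mathrm{loc}}(\R^{N+1}_{+},y^{1-2s})$; the cross term is bounded by Cauchy--Schwarz against the product of this vanishing factor and $\|\nabla w_{n}\|_{L^{2}(\R^{N+1}_{+},y^{1-2s})}$, hence also tends to $0$. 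The first summand converges to $\int\phi^{2}\,d\widetilde{\mu}$. On the left, tightness of $(v_{n})$ — inherited by $(w_{n})$ — prevents any loss of mass at infinity, so $\int|\phi(x,0)\,w_{n}(x,0)|^{\2}\,dx\to\int|\phi(x,0)|^{\2}\,d\widetilde{\nu}$. Passing to the limit delivers the reverse Sobolev inequality.

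Next I would invoke the standard two-measure lemma (cf.\ Lemma 1.40 in \cite{W}): any pair of non-negative Radon measures $\widetilde{\nu}$ on $\R^{N}$ and $\widetilde{\mu}$ on $\overline{\R^{N+1}_{+}}$ satisfying the reverse Sobolev inequality forces $\widetilde{\nu}$ to be purely atomic, concentrated on an at most countable set $\{x_{i}\}_{i\in I}\subset\R^{N}$, with $\widetilde{\nu}=\sum_{i\in I}\nu_{i}\delta_{x_{i}}$, and $\widetilde{\mu}\ge\sum_{i\in I}\mu_{i}\delta_{(x_{i},0)}$ where $\mu_{i}\ge S_{*}\nu_{i}^{2/\2}$. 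Plugging this into the decomposition of $\nu$ and $\mu$ from the first step produces \eqref{47FS}, \eqref{48FS}, \eqref{49FS} exactly as claimed.

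The main technical obstacle is verifying that the weight $y^{1-2s}$ and the zero-order term $m^{2}v^{2}$ in the $\x$-norm do not interfere with the classical Lions argument: concretely, that $m^{2}y^{1-2s}w_{n}^{2}\to 0$ as measures on $\R^{N+1}_{+}$, and that the lower-order contributions from expanding $|\nabla(\phi w_{n})|^{2}$ genuinely vanish. Both reductions rest on the compact embedding $\x\hookrightarrow L^{2}(B_{R}^{+},y^{1-2s})$, which is the key structural feature distinguishing this setting from the pure extension of $(-\Delta)^{s}$. The tightness hypothesis is then what ensures that the Radon-measure limits $\mu$ and $\nu$ lose no mass at infinity, so that the atomic decomposition above is exhaustive.
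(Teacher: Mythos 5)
Your proof is correct and follows essentially the same strategy as the paper's: you subtract the weak limit to form $w_{n}:=v_{n}-v$, use the Brezis--Lieb lemma for the boundary traces and the gradient-orthogonality expansion for the energy density (with the compact embedding $\x\hookrightarrow L^{2}(B_{R}^{+},y^{1-2s})$ to discard the lower-order terms), establish the reverse Sobolev inequality for the residual measures $\tilde\mu,\tilde\nu$ by testing \eqref{BRANDLE} against $\phi w_{n}$, and then invoke Lions' concentration-compactness lemma. The paper simply orders these steps differently, proving the reverse Sobolev inequality first in the special case $v\equiv 0$ and then decomposing $\mu,\nu$ via $w_{n}$, which is a cosmetic rearrangement of the same argument.
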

\begin{proof}
We follow the strategy used in the proof of \cite[Lemma I.1]{Lions1} (see also \cite[Lemma 2.3]{Lions2}, \cite[Proposition 3.2.1]{DMV}, and \cite[Theorem 5.1]{BACDPS}).
We first suppose that $v\equiv 0$. We claim that, for all $\varphi\in C^{\infty}_{c}(\overline{\R^{N+1}_{+}})$, it holds
\begin{equation}\label{DMV1}
\left(\int_{\R^{N}} |\varphi(x, 0)|^{\2} d\nu\right)^{\frac{1}{\2}}\leq C_{0}\left(\iint_{\R^{N+1}_{+}} \varphi^{2} \, d\mu\right)^{\frac{1}{2}},
\end{equation}
for some constant $C_{0}>0$. For this purpose, we fix  $\varphi\in C^{\infty}_{c}(\overline{\R^{N+1}_{+}})$ and let $K:=\supp(\varphi)$. 
By \eqref{BRANDLE}, we deduce that
\begin{equation}\label{DMV2}
\left(\int_{\R^{N}} |(\varphi v_{n})(x, 0)|^{\2} dx \right)^{\frac{1}{\2}}\leq S^{-\frac{1}{2}}_{*} \left(\iint_{\R^{N+1}_{+}} y^{1-2s} [|\nabla (\varphi v_{n})|^{2}+m^{2}(\varphi v_{n})^{2}] \, dx dy\right)^{\frac{1}{2}}.
\end{equation}
Now, we note that \eqref{46FS2} implies that
\begin{align}\label{DMV3}
\int_{\R^{N}} |(\varphi v_{n})(x, 0)|^{\2} dx \ri \int_{\R^{N}}  |\varphi(x, 0)|^{\2} d\nu.
\end{align}
On the other hand,
\begin{align}\label{DMV4}
&\iint_{\R^{N+1}_{+}} y^{1-2s} [|\nabla (\varphi v_{n})|^{2}+m^{2}(\varphi v_{n})^{2}] \, dx dy \nonumber \\
&=\iint_{\R^{N+1}_{+}} y^{1-2s} \varphi^{2}[|\nabla v_{n}|^{2}+m^{2}v_{n}^{2}] \, dx dy +\iint_{\R^{N+1}_{+}} y^{1-2s} v_{n}^{2} |\nabla \varphi|^{2} \, dx dy \nonumber \\
&\quad+2 \iint_{\R^{N+1}_{+}} y^{1-2s} v_{n}\varphi\nabla \varphi\cdot \nabla v_{n}\, dx dy. 
\end{align}
Since $H^{1}(K, y^{1-2s})$ is compactly embedded in $L^{2}(K, y^{1-2s})$ (see \cite[Lemma 3.1.2]{DMV}), we have that $v_{n}\ri 0$ in $L^{2}(K, y^{1-2s})$, which yields
\begin{equation}\label{DMV5}
\iint_{\R^{N+1}_{+}} y^{1-2s} v_{n}^{2} |\nabla \varphi|^{2} \, dx dy\leq C\iint_{K} y^{1-2s} v_{n}^{2}  \, dx dy \ri 0.
\end{equation}
Furthermore, the H\"older inequality, the boundedness of $(v_{n})$ in $\x$, and \eqref{DMV5} lead to
\begin{align}\label{DMV6}
&\left| \iint_{\R^{N+1}_{+}} y^{1-2s} v_{n}\varphi\nabla \varphi\cdot \nabla v_{n}\, dx dy\right|  \nonumber \\
&\leq \left(\iint_{\R^{N+1}_{+}} y^{1-2s} v^{2}_{n}|\nabla \varphi|^{2}\, dx dy\right)^{\frac{1}{2}} \left(\iint_{\R^{N+1}_{+}} y^{1-2s} \varphi^{2} |\nabla v_{n}|^{2}\, dx dy\right)^{\frac{1}{2}} \nonumber\\
&\leq C  \left(\iint_{\R^{N+1}_{+}} y^{1-2s} v^{2}_{n} \, dx dy\right)^{\frac{1}{2}} \left(\iint_{\R^{N+1}_{+}} y^{1-2s} |\nabla v_{n}|^{2}\, dx dy\right)^{\frac{1}{2}} \nonumber\\
&\leq C\left(\iint_{K} y^{1-2s} v_{n}^{2}\, dx dy\right)^{\frac{1}{2}}\ri 0.
\end{align}
Finally, taking \eqref{46FS1} into account, we see that 
\begin{equation}\label{DMV7}
\iint_{\R^{N+1}_{+}} y^{1-2s} \varphi^{2}(|\nabla v_{n}|^{2}+m^{2}v_{n}^{2}) \, dx dy\ri \iint_{\R^{N+1}_{+}}  \varphi^{2} \, d\mu.
\end{equation}
Putting together \eqref{DMV2}-\eqref{DMV7}, we can infer that \eqref{DMV1} holds with $C_{0}=S^{-\frac{1}{2}}_{*}$. 
Now, assume that $v$ is not necessarily $0$. Set $w_{n}:=v_{n}-v$. Clearly, $(w_{n})$ is a bounded tight sequence in $\x$ such that $w_{n}\rightharpoonup 0$ in $\x$. Moreover, there exist two bounded nonnegative measures $\tilde{\mu}$ and $\tilde{\nu}$ on $\R^{N+1}_{+}$ and $\R^{N}$, respectively, such that
\begin{align}
&y^{1-2s} (|\nabla w_{n}|^{2}+m^{2}w_{n}^{2})\rightharpoonup \tilde{\mu} \quad \mbox{ weakly in the sense of measures, } \label{46FS1w}\\
&|w_{n}(\cdot, 0)|^{\2}\rightharpoonup \tilde{\nu} \quad \mbox{ weakly in the sense of measures. } \label{46FS2w}
\end{align}
Then, we are in the previous case, and we can use \eqref{DMV1} to deduce that
\begin{equation*}
\left(\int_{\R^{N}} |\varphi(x, 0)|^{\2} d\tilde{\nu}\right)^{\frac{1}{\2}}\leq S^{-\frac{1}{2}}_{*} \left(\iint_{\R^{N+1}_{+}} \varphi^{2} \, d\tilde{\mu}\right)^{\frac{1}{2}}, \quad \mbox{ for all } \varphi\in C^{\infty}_{c}(\overline{\R^{N+1}_{+}}).
\end{equation*}
Hence, as in \cite[Lemma 1.2]{Lions1}, we can find an at most countable set $I$, a family of distinct points $(x_{i})_{i\in I}\subset \R^{N}$ and $(\nu_{i})_{i\in I}\subset (0, \infty)$ such that
\begin{align}\label{LGSONY}
\tilde{\nu}=\sum_{i\in I} \nu_{i} \delta_{x_{i}}.
\end{align}
Pick $\varphi\in C^{\infty}_{c}(\overline{\R^{N+1}_{+}})$. By the Brezis-Lieb lemma \cite{BL}, we know that
$$
|(\varphi w_{n})(\cdot, 0)|_{\2}^{\2}=|(\varphi v_{n})(\cdot, 0)|_{\2}^{\2}-|(\varphi v)(\cdot, 0)|_{\2}^{\2}+o_{n}(1).
$$
The aforementioned fact combined with \eqref{46FS2}, \eqref{46FS2w}, and \eqref{LGSONY}, the boundedness of $(v_{n})$ in $\x$, and the tightness of $(|v_{n}(\cdot, 0)|^{\2})$ implies that \eqref{47FS} holds.
Now, take $\psi\in C^{\infty}_{c}(\overline{\R^{N+1}_{+}})$ such that $0\leq \psi\leq 1$, $\psi=1$ in $B_{\frac{1}{2}}^{+}$, $\psi=0$ in $(B_{1}^{+})^{c}$ and $\| \nabla \psi \|_{L^{\infty}(\R^{N+1}_{+})}\leq 2$. 
Fix $i\in I$.
For $\rho>0$, we define $\psi_{\rho}(x, y):=\psi(\frac{x-x_{i}}{\rho}, \frac{y}{\rho})$. Applying \eqref{BRANDLE} to $\psi_{\rho}v_{n}$, we have that
\begin{align*}
S_{*}^{\frac{1}{2}} \left(\int_{\R^{N}} |\psi_{\rho}(x, 0)v_{n}(x, 0)|^{2^{*}_{s}}\, dx\right)^{\frac{1}{\2}}\leq  \left(\iint_{\R^{N+1}_{+}} y^{1-2s}|\nabla (\psi_{\rho}v_{n})|^{2}\, dxdy\right)^{\frac{1}{2}},
\end{align*}
from which
\begin{align*}
&S_{*}^{\frac{1}{2}} \left(\int_{\R^{N}} |\psi_{\rho}(x, 0)v_{n}(x, 0)|^{2^{*}_{s}}\, dx\right)^{\frac{1}{\2}}\\
&\leq  \left(\iint_{\R^{N+1}_{+}} y^{1-2s}|\nabla \psi_{\rho}|^{2}v^{2}_{n}\, dxdy\right)^{\frac{1}{2}}+\left(\iint_{\R^{N+1}_{+}} y^{1-2s}|\nabla v_{n}|^{2}\psi_{\rho}^{2}\, dxdy\right)^{\frac{1}{2}}\\
&\leq  \left(\iint_{\R^{N+1}_{+}} y^{1-2s}|\nabla \psi_{\rho}|^{2}v^{2}_{n}\, dxdy\right)^{\frac{1}{2}}+\left(\iint_{\R^{N+1}_{+}} y^{1-2s}(|\nabla v_{n}|^{2}+m^{2}v_{n}^{2})\psi_{\rho}^{2}\, dxdy\right)^{\frac{1}{2}}.
\end{align*}
Letting $n\ri \infty$ and exploiting the fact that $\psi_{\rho}$ has compact support, we obtain
\begin{align}\label{LG65}
S_{*}^{\frac{1}{2}} \left(\int_{\R^{N}} |\psi_{\rho}(x, 0)|^{2^{*}_{s}}\, d\nu\right)^{\frac{1}{\2}}&\leq  \left(\iint_{\R^{N+1}_{+}} y^{1-2s}|\nabla \psi_{\rho}|^{2}v^{2}\, dxdy\right)^{\frac{1}{2}}+\left(\iint_{\R^{N+1}_{+}} \psi_{\rho}^{2}\, d\mu\right)^{\frac{1}{2}}.
\end{align}
Using the H\"older inequality with exponents $\gamma$ and $\frac{\gamma}{\gamma-1}$ and recalling \eqref{weightedE}, we see that
\begin{align*}
\left(\iint_{\R^{N+1}_{+}} y^{1-2s}|\nabla \psi_{\rho}|^{2}v^{2}\, dxdy\right)^{\frac{1}{2}}&\leq \frac{C}{\rho} \left(\iint_{B^{+}_{\rho}(x_{i}, 0)} y^{1-2s} v^{2} \, dx dy\right)^{\frac{1}{2}} \\
&\leq \frac{C}{\rho} \left(\iint_{B^{+}_{\rho}(x_{i}, 0)} y^{1-2s} |v|^{2\gamma} \, dx dy\right)^{\frac{1}{2\gamma}} \left(\iint_{B^{+}_{\rho}(x_{i}, 0)} y^{1-2s}  \, dx dy\right)^{\frac{\gamma-1}{2\gamma}} \\
&\leq C \left(\iint_{B^{+}_{\rho}(x_{i}, 0)} y^{1-2s} |v|^{2\gamma} \, dx dy\right)^{\frac{1}{2\gamma}}\ri 0 \quad\mbox{ as } \rho\ri 0.
\end{align*}
Then, passing to the limit as $\rho\ri 0$ in \eqref{LG65}, we find
$$
S_{*} \nu_{i}^{\frac{2}{2^{*}_{s}}}\leq \mu_{i}:=\lim_{\rho\ri 0} \mu(B^{+}_{\rho}(x_{i}, 0)),
$$
and so \eqref{49FS} is true.
Since $\mu\geq \sum_{i\in I} \mu_{i}\delta_{(x_{i}, 0)}$, $\mu\geq y^{1-2s} (|\nabla v|^{2}+m^{2}v^{2})$ (by the weak convergence), and $y^{1-2s} (|\nabla v|^{2}+m^{2}v^{2})$ and $\sum_{i\in I} \mu_{i}\delta_{(x_{i}, 0)}$ are orthogonal, we deduce that \eqref{48FS} hold.
\end{proof}

\noindent
Next, we prove the tightness of the Palais-Smale sequences of $J_{\e}$. More precisely, we establish the following result.
\begin{lem}\label{lemTIGHT}
Let $0<c<\frac{s}{N}(\zeta S_{*})^{\frac{N}{2s}}$ and $(v_{n})\subset X_{\e}$ be a Palais-Smale sequence of $J_{\e}$ at the level $c$. Then, for all $\xi>0$, there exists $R=R(\xi)>0$ such that
\begin{align}\label{TIGHT}
\limsup_{n\ri \infty} \left[\iint_{\R^{N+1}_{+}\setminus B^{+}_{R}} y^{1-2s}(|\nabla v_{n}|^{2}+m^{2}v^{2}_{n})\, dx dy+\int_{\R^{N}\setminus B_{R}} (V_{\e}(x)+V_{1})v_{n}^{2}(x, 0)\, dx\right]<\xi.
\end{align}
\end{lem}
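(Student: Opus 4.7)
\medskip

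\textbf{Plan of proof.} The natural strategy is the standard localization/cutoff argument adapted to the weighted Sobolev setting. Pick a smooth cutoff function $\chi_R\in C^{\infty}(\overline{\R^{N+1}_{+}})$ such that $0\leq \chi_R\leq 1$, $\chi_R\equiv 0$ on $B_R^+$, $\chi_R\equiv 1$ on $\R^{N+1}_+\setminus B_{2R}^+$, and $|\nabla \chi_R|\leq C/R$. Since $\Lambda$ is bounded, we may (for fixed $\e>0$) take $R$ so large that $\Lambda_\e\subset B_R$, hence $\mathrm{supp}(\chi_R(\cdot,0))\subset \Lambda_\e^c$.

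\medskip

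Use $\chi_R^2 v_n$ as a test function. Since $(v_n)$ is bounded in $X_\e$ by Lemma \ref{boundPS} and $\|\chi_R^2 v_n\|_\e\leq C$, we have $\langle J'_\e(v_n),\chi_R^2 v_n\rangle=o(1)$. Expanding,
\begin{align*}
\iint_{\R^{N+1}_+} y^{1-2s}\chi_R^2(|\nabla v_n|^2+m^2 v_n^2)\,dxdy &+ \int_{\R^N}V_\e(x)\chi_R^2 v_n^2(x,0)\,dx \\
&= -2\iint_{\R^{N+1}_+} y^{1-2s}\chi_R v_n\,\nabla\chi_R\cdot\nabla v_n\,dxdy + \int_{\R^N}g_\e(x,v_n(x,0))\chi_R^2 v_n(x,0)\,dx+o(1).
\end{align*}
Since $\mathrm{supp}(\chi_R(\cdot,0))\subset \Lambda_\e^c$, property $(g_3)(ii)$ gives $\int g_\e v_n\chi_R^2\leq \frac{V_1}{\kappa}\int \chi_R^2 v_n^2(x,0)\,dx$. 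Adding $V_1\int \chi_R^2 v_n^2(x,0)\,dx$ to both sides,
\begin{align*}
\iint_{\R^{N+1}_+}y^{1-2s}\chi_R^2(|\nabla v_n|^2+m^2v_n^2)\,dxdy+\int_{\R^N}(V_\e+V_1)\chi_R^2 v_n^2(x,0)\,dx\leq V_1\Bigl(1+\tfrac{1}{\kappa}\Bigr)\int_{\R^N}\chi_R^2 v_n^2(x,0)\,dx+\mathcal{R}_n(R)+o(1),
\end{align*}
where $\mathcal{R}_n(R)$ collects the cross term. By Cauchy--Schwarz, $|\nabla\chi_R|\leq C/R$, the boundedness of $(v_n)$ in $X^s(\R^{N+1}_+)$ (via \eqref{equivalent}), and $\iint y^{1-2s} v_n^2\leq m^{-2}\|v_n\|_{X^s}^2\leq C$, one gets $|\mathcal{R}_n(R)|\leq C/R$ uniformly in $n$.

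\medskip

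The key step is to absorb the bad term on the right into the LHS. Apply the trace inequality \eqref{m-ineq} to $\chi_R v_n\in X^s(\R^{N+1}_+)$:
\begin{align*}
m^{2s}\int_{\R^N}\chi_R^2 v_n^2(x,0)\,dx\leq \iint_{\R^{N+1}_+}y^{1-2s}\chi_R^2(|\nabla v_n|^2+m^2 v_n^2)\,dxdy+\iint_{\R^{N+1}_+}y^{1-2s}v_n^2|\nabla\chi_R|^2\,dxdy+2\iint_{\R^{N+1}_+}y^{1-2s}\chi_R v_n\,\nabla\chi_R\cdot\nabla v_n\,dxdy,
\end{align*}
whose last two summands are again $O(1/R)$. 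Substituting and rearranging,
\begin{align*}
\Bigl(1-\tfrac{V_1(1+1/\kappa)}{m^{2s}}\Bigr)\iint_{\R^{N+1}_+}y^{1-2s}\chi_R^2(|\nabla v_n|^2+m^2 v_n^2)\,dxdy+\int_{\R^N}(V_\e+V_1)\chi_R^2 v_n^2(x,0)\,dx\leq \frac{C}{R}+o(1).
\end{align*}
Since the first coefficient equals $\zeta\in (0,1)$ and $V_\e+V_1\geq 0$ by $(V_1)$, dropping $\chi_R^2\leq \mathbf{1}_{\R^{N+1}_+\setminus B_R^+}$ to lower bound the LHS yields
\begin{align*}
\limsup_{n\to\infty}\Bigl[\iint_{\R^{N+1}_+\setminus B^+_{2R}}y^{1-2s}(|\nabla v_n|^2+m^2 v_n^2)\,dxdy+\int_{\R^N\setminus B_{2R}}(V_\e+V_1)v_n^2(x,0)\,dx\Bigr]\leq \frac{C}{\zeta R},
\end{align*}
which is $<\xi$ once $R$ is chosen large enough. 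The main (only) subtle point is the algebraic cancellation in the absorption step, which is exactly where the condition $\kappa>V_1/(m^{2s}-V_1)$ (equivalently $\zeta>0$) imposed in the penalization is used; the mountain pass level being below $\frac{s}{N}(\zeta S_*)^{N/2s}$ plays no direct role here but is implicitly consistent with the same smallness.
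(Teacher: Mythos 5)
Your proof is correct and follows essentially the same route as the paper: test $J'_\e(v_n)$ against $\chi_R^2 v_n$, split off the penalized region via $(g_3)$-$(ii)$, control the cross term by Cauchy--Schwarz, and absorb the remaining $V_1(1+1/\kappa)$ boundary term using the trace inequality applied to $\chi_R v_n$ (the paper uses \eqref{Young}, which is a restatement of \eqref{m-ineq}); the coefficient $\zeta=1-\frac{V_1}{m^{2s}}(1+\frac{1}{\kappa})$ appears identically. One small slip in your last paragraph: to lower-bound the left-hand side you want $\chi_R^2\geq \mathbf{1}_{\R^{N+1}_+\setminus B^+_{2R}}$ (not the displayed $\leq$ inequality on $\R^{N+1}_+\setminus B_R^+$), which is what you actually use when you pass to integrals over $\R^{N+1}_+\setminus B^+_{2R}$; and your remark that the level restriction $c<\frac{s}{N}(\zeta S_*)^{N/2s}$ plays no direct role here is accurate, as only the boundedness of $(v_n)$ from Lemma \ref{boundPS} is needed.
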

\begin{proof}
For $R>0$, let $\eta_{R}\in C^{\infty}(\overline{\R^{N+1}_{+}})$ be a function such that
\begin{equation*}
\eta_{R}(x, y):=
\left\{
\begin{array}{ll}
0 &\mbox{ if } (x, y)\in B^{+}_{R/2}, \\
1 &\mbox{ if } (x, y)\in \overline{\R^{N+1}_{+}}\setminus B^{+}_{R},
\end{array}
\right.
\end{equation*}
with $0\leq \eta_{R}\leq 1$ and $\|\nabla \eta_{R}\|_{L^{\infty}(\R^{N+1}_{+})}\leq C/R$, for some $C>0$ independent of $R>0$. Since $(v_{n})$ is a bounded Palais-Smale sequence in $X_{\e}$, we see that $\langle J'_{\e}(v_{n}), v_{n}\eta^{2}_{R}\rangle=o_{n}(1)$, i.e., 
\begin{align*}
&\iint_{\R^{N+1}_{+}} y^{1-2s} (|\nabla v_{n}|^{2}+m^{2}v_{n}^{2})\eta^{2}_{R}\, dx dy+\int_{\R^{N}} V_{\e}(x)v^{2}_{n}(x, 0)\eta^{2}_{R}(x, 0)\, dx\\
&=-2\iint_{\R^{N+1}_{+}} y^{1-2s} v_{n}\eta_{R}\nabla v_{n}\cdot \nabla \eta_{R}\, dx dy+\int_{\R^{N}} g_{\e}(x, v_{n}(x, 0)) v_{n}(x, 0)\eta^{2}_{R}(x, 0)\, dx+o_{n}(1),
\end{align*}
which can be rewritten as:
\begin{align}\label{HB1}
&\iint_{\R^{N+1}_{+}} y^{1-2s} (|\nabla v_{n}|^{2}+m^{2}v_{n}^{2})\eta^{2}_{R}\, dx dy+\int_{\R^{N}} (V_{\e}(x)+V_{1})v^{2}_{n}(x, 0)\eta^{2}_{R}(x, 0)\, dx \nonumber\\
&=\int_{\R^{N}} g_{\e}(x, v_{n}(x, 0)) v_{n}(x, 0)\eta^{2}_{R}(x, 0)\, dx+V_{1} \int_{\R^{N}} v^{2}_{n}(x, 0)\eta^{2}_{R}(x, 0)\, dx \nonumber \\
&\quad-2\iint_{\R^{N+1}_{+}} y^{1-2s} v_{n}\eta_{R}\nabla v_{n}\cdot \nabla \eta_{R}\, dx dy+o_{n}(1).
\end{align}
Choose $R>0$ such that $\Lambda_{\e}\subset B_{R/2}$. Thus, thanks to $(g_{3})$-$(ii)$, 
\begin{align}\label{HB2}
\int_{\R^{N}} g_{\e}(x, v_{n}(x, 0)) v_{n}(x, 0)\eta^{2}_{R}(x, 0)\, dx+V_{1}  \int_{\R^{N}} v^{2}_{n}(x, 0)\eta^{2}_{R}(x, 0)\, dx\leq V_{1} \left(1+\frac{1}{\kappa} \right)\int_{\R^{N}} v_{n}^{2}(x, 0)\eta^{2}_{R}(x, 0)\, dx.
\end{align}
On the other hand, by H\"older's inequality, $0\leq \eta_{R}\leq 1$,  $\|\nabla \eta_{R}\|_{L^{\infty}(\R^{N+1}_{+})}\leq C/R$, and the boundedness of $(v_{n})$ in $X_{\e}$, we have
\begin{align}\label{HB3}
\left|2\iint_{\R^{N+1}_{+}} y^{1-2s} v_{n} \eta_{R}\nabla v_{n}\cdot \nabla \eta_{R}\, dx dy\right|&\leq \frac{C}{R} \iint_{\R^{N+1}_{+}} y^{1-2s}v_{n}|\nabla v_{n}|\, dx dy \nonumber\\
&\leq \frac{C}{R} \left(\iint_{\R^{N+1}_{+}} y^{1-2s}v^{2}_{n}\, dx dy  \right)^{\frac{1}{2}}  \left(\iint_{\R^{N+1}_{+}} y^{1-2s}|\nabla v_{n}|^{2}\, dx dy  \right)^{\frac{1}{2}} \nonumber\\
&\leq \frac{C}{R}.
\end{align}
Then, applying \eqref{Young} to $v_{n}\eta_{R}$, and using $\|\nabla \eta_{R}\|_{L^{\infty}(\R^{N+1}_{+})}\leq C/R$ and \eqref{HB3}, we infer that
\begin{align}\label{HB4}
&V_{1} \left(1+\frac{1}{\kappa} \right)\int_{\R^{N}} v^{2}_{n}(x, 0)\eta^{2}_{R}(x, 0)\, dx \nonumber \\ 
&\leq V_{1} m^{-2s}\left(1+\frac{1}{\kappa} \right) \iint_{\R^{N+1}_{+}} y^{1-2s} |\nabla (v_{n}\eta_{R})|^{2}\, dx dy+V_{1}m^{2-2s}\left(1+\frac{1}{\kappa} \right) \iint_{\R^{N+1}_{+}} y^{1-2s} v^{2}_{n}\eta^{2}_{R}\, dx dy \nonumber \\
&= V_{1} m^{-2s}\left(1+\frac{1}{\kappa} \right) \iint_{\R^{N+1}_{+}} y^{1-2s} |\nabla v_{n}|^{2}\eta^{2}_{R}\, dx dy+V_{1} m^{-2s}\left(1+\frac{1}{\kappa} \right) \iint_{\R^{N+1}_{+}} y^{1-2s} |\nabla \eta_{R}|^{2}v^{2}_{n}\, dx dy \nonumber \\
&\quad+2V_{1} m^{-2s} \left(1+\frac{1}{\kappa} \right) \iint_{\R^{N+1}_{+}} y^{1-2s}  v_{n}\eta_{R} \nabla v_{n}\cdot \nabla\eta_{R}\, dx dy+V_{1}m^{2-2s} \left(1+\frac{1}{\kappa} \right)\iint_{\R^{N+1}_{+}} y^{1-2s} v^{2}_{n}\eta^{2}_{R}\, dx dy \nonumber \\
&\leq V_{1} m^{-2s} \left(1+\frac{1}{\kappa} \right) \iint_{\R^{N+1}_{+}} y^{1-2s} (|\nabla v_{n}|^{2}+m^{2}v_{n}^{2})\eta^{2}_{R}\, dx dy+\frac{C}{R^{2}} \iint_{\R^{N+1}_{+}} y^{1-2s}v_{n}^{2}\, dxdy+\frac{C}{R} \nonumber \\
&\leq V_{1} m^{-2s}\left(1+\frac{1}{\kappa} \right) \iint_{\R^{N+1}_{+}} y^{1-2s} (|\nabla v_{n}|^{2}+m^{2}v_{n}^{2})\eta^{2}_{R}\, dx dy+\frac{C}{R^{2}}+\frac{C}{R}.
\end{align}
Putting together \eqref{HB1}-\eqref{HB4}, we arrive at
\begin{align}\label{HB5}
&\left[1-V_{1} m^{-2s}\left(1+\frac{1}{\kappa} \right)\right] \iint_{\R^{N+1}_{+}} y^{1-2s} (|\nabla v_{n}|^{2}+m^{2}v_{n}^{2})\eta^{2}_{R}\, dx dy+\int_{\R^{N}} (V_{\e}(x)+V_{1})v^{2}_{n}(x, 0)\eta^{2}_{R}(x, 0)\, dx \nonumber\\
&\leq \frac{C}{R}+\frac{C}{R^{2}}+o_{n}(1).
\end{align}
By means of $\kappa>\frac{V_{1}}{m^{2s}-V_{1}}$, $(V_{1})$ and the definition of $\eta_{R}$, we deduce that \eqref{HB5} implies the assertion.
\end{proof}

\begin{remark}
Differently from \cite{AF, AmbrosioBOOK, DF}, we use $\langle J'_{\e}(v_{n}), v_{n}\eta^{2}_{R}\rangle=o_{n}(1)$ instead of $\langle J'_{\e}(v_{n}), v_{n}\eta_{R}\rangle=o_{n}(1)$ to obtain \eqref{TIGHT}. This is motivated by the fact that to estimate the quadratic terms, it is needed to apply in a careful way Inequality \eqref{Young}.
\end{remark}

\begin{remark}\label{ADELAIDE}
Let $r\in[2, \2]$. Exploiting $\eta_{R}(\cdot, 0)=1$ in $B_{R}^{c}$, Theorem \ref{Sembedding}, $|x+y|^{2}\leq 2(|x|^{2}+|y|^{2})$ for all $x, y\in \R^{N}$, $\|\nabla \eta_{R}\|_{L^{\infty}(\R^{N+1}_{+})}\leq C/R$ and the boundedness of $(v_{n})$ in $X_{\e}$, we can see that
\begin{align*}
&\left(\int_{B^{c}_{R}} |v_{n}(x, 0)|^{r}\, dx  \right)^{\frac{2}{r}} \leq \left(\int_{\R^{N}} |(v_{n}\eta_{R})(x, 0)|^{r}\, dx  \right)^{\frac{2}{r}} \\
&\leq C \left(\iint_{\R^{N+1}_{+}} y^{1-2s}(|\nabla(v_{n}\eta_{R})|^{2}+m^{2}v^{2}_{n}\eta_{R}^{2})\, dxdy\right)\\
&\leq C\left(\iint_{\R^{N+1}_{+}} y^{1-2s}(|\nabla v_{n}|^{2}+m^{2}v_{n}^{2})\eta^{2}_{R}\, dxdy+\iint_{\R^{N+1}_{+}} y^{1-2s}|\nabla \eta_{R}|^{2}v^{2}_{n}\, dxdy\right) \\
&\leq C\left(\iint_{\R^{N+1}_{+}} y^{1-2s}(|\nabla v_{n}|^{2}+m^{2}v_{n}^{2})\eta^{2}_{R}\, dxdy+\frac{C}{R^{2}}\right),
\end{align*}
which combined with \eqref{HB5} gives
\begin{align*}
\left(\int_{B^{c}_{R}} |v_{n}(x, 0)|^{r}\, dx  \right)^{\frac{2}{r}}\leq \frac{C}{R}+\frac{C}{R^{2}}+o_{n}(1).
\end{align*}
Thus,
\begin{align}\label{HB6}
\lim_{R\ri \infty} \limsup_{n\ri \infty} \int_{B^{c}_{R}} |v_{n}(x, 0)|^{r}\, dx=0, \quad \mbox{ for all } r\in [2, \2].
\end{align}
\end{remark}

\noindent
At this point, we can show that the modified functional fulfills a local compactness condition.
\begin{lem}\label{lemma2}
Let $0<c<\frac{s}{N}(\zeta S_{*})^{\frac{N}{2s}}$. Then, $J_{\e}$ satisfies the Palais-Smale condition at the level $c$.
\end{lem}
\begin{proof}
Let $0<c<\frac{s}{N}(\zeta S_{*})^{\frac{N}{2s}}$ and $(v_{n})\subset X_{\e}$ be a Palais-Smale sequence at the level $c$, namely,
\begin{align*}
J_{\e}(v_{n})\ri c \quad \mbox{ and } \quad J'_{\e}(v_{n})\ri 0 \mbox{ in } X^{*}_{\e},
\end{align*}
as  $n\ri \infty$. By Lemma \ref{boundPS}, we know that $(v_{n})$ is bounded in $X_{\e}$. 
In view of Remark \ref{REMKINDERF}, we can suppose that $(v_{n})$ is nonnegative.
Thanks to the reflexivity of $X_{\e}$ and Theorem \ref{Sembedding}, up to a subsequence, we may assume that  
\begin{equation}\label{OSVALDO}
\left\{
\begin{array}{ll}
v_{n}\rightharpoonup v &\mbox{ in } X_{\e}, \\
v_{n}(\cdot, 0)\ri v(\cdot, 0) &\mbox{ in } L^{r}_{loc}(\R^{N}), \,\, \mbox{ for all } r\in [1, \2), \\
v_{n}(\cdot, 0)\ri v(\cdot, 0) &\mbox{ a.e. in } \R^{N}.
\end{array}
\right.
\end{equation}
We are going to demonstrate that $v_{n}\ri v$ in $X_{\e}$. 
Using the density of $C^{\infty}_{c}(\overline{\R^{N+1}_{+}})$ in $X_{\e}$, $(g_{1})$, $(g_{2})$, $(f_{2})$, and \eqref{OSVALDO}, it is easy to check that $\langle J'_{\e}(v), \varphi\rangle=0$ for all $\varphi\in X_{\e}$. In particular,
\begin{align}\label{AM1}
\|v\|^{2}_{\e}=\int_{\R^{N}} g_{\e}(x, v(x, 0)) v(x, 0)\, dx.
\end{align}
On the other hand, $\langle J'_{\e}(v_{n}), v_{n}\rangle=o_{n}(1)$, i.e., 
\begin{align}\label{AM2}
\|v_{n}\|^{2}_{\e}=\int_{\R^{N}} g_{\e}(x, v_{n}(x, 0)) v_{n}(x, 0)\, dx+o_{n}(1).
\end{align}
In light of \eqref{AM1} and \eqref{AM2}, if we prove that
\begin{align}\label{AM3}
\int_{\R^{N}} g_{\e}(x, v_{n}(x, 0)) v_{n}(x, 0)\, dx=\int_{\R^{N}} g_{\e}(x, v(x, 0)) v(x, 0)\, dx+o_{n}(1),
\end{align}
then we deduce that $\|v_{n}\|_{\e}\ri \|v\|_{\e}$ as $n\ri \infty$, and recalling that $X_{\e}$ is a Hilbert space, we conclude that $v_{n}\ri v$ in $X_{\e}$ as $n\ri \infty$. Next, we verify that \eqref{AM3} is valid. By virtue of Lemma \ref{lemTIGHT}, fixed $\xi>0$, there exists $R=R(\xi)>0$ such that \eqref{TIGHT} is true. By $(g_{2})$, $(f_{1})$, $(f_{2})$, and \eqref{HB6}, we see that 
\begin{align}\label{HB9}
\limsup_{n\ri \infty} \left|\int_{B^{c}_{R}} g_{\e}(x, v_{n}(x, 0))v_{n}(x, 0)\, dx \right| 
&\leq C\limsup_{n\ri \infty} \int_{B^{c}_{R}} ( |v_{n}(x, 0)|^{2}+|v_{n}(x, 0)|^{q}+|v_{n}(x, 0)|^{\2})\, dx \nonumber\\
&\leq C\xi.
\end{align}
On the other hand, because $g_{\e}(\cdot, v(\cdot, 0))v(\cdot, 0)\in L^{1}(\R^{N})$, we can take $R>0$ large enough so that
\begin{align}\label{HB10}
\int_{B^{c}_{R}} g_{\e}(x, v(x, 0)) v(x, 0)\, dx\leq \xi.
\end{align}
Thus, \eqref{HB9} and \eqref{HB10} yield
\begin{align}\label{HB11}
\limsup_{n\ri \infty} \left|\int_{B^{c}_{R}} g_{\e}(x, v_{n}(x, 0))v_{n}(x, 0)\, dx-\int_{B^{c}_{R}} g_{\e}(x, v(x, 0))v(x, 0)\, dx \right|\leq C\xi.
\end{align}
Now, it follows from  the definition of $g$ that
\begin{align*}
g_{\e}(x, v_{n}(x, 0))v_{n}(x, 0)\leq f(v_{n}(x, 0))v_{n}(x, 0)+a^{\2}+\frac{V_{1}}{\kappa}v_{n}^{2}(x, 0), \quad \mbox{ for a.e. } x\in \Lambda^{c}_{\e}.
\end{align*}
Since $B_{R}\cap \Lambda^{c}_{\e}$ is bounded, we can use the aforementioned estimate, $(f_{1})$, $(f_{2})$, \eqref{OSVALDO}, and the dominated convergence theorem to infer that, as $n\ri \infty$,
\begin{align}\label{HB12}
\int_{B_{R}\cap \Lambda^{c}_{\e}} g_{\e}(x, v_{n}(x, 0)) v_{n}(x, 0)\, dx=\int_{B_{R}\cap \Lambda^{c}_{\e}} g_{\e}(x, v(x, 0)) v(x, 0)\, dx+o_{n}(1).
\end{align}
At this point, we aim to show that
\begin{align}\label{HB13}
\int_{\Lambda_{\e}} v_{n}^{\2}(x, 0)\, dx=\int_{\Lambda_{\e}} v^{\2}(x, 0)\, dx+o_{n}(1).
\end{align}
In fact, if we assume that \eqref{HB13} holds, then we can exploit $(g_{2})$, $(f_{1})$, $(f_{2})$, \eqref{OSVALDO}, and the dominated convergence theorem again to obtain 
\begin{align*}
\int_{B_{R}\cap \Lambda_{\e}} g_{\e}(x, v_{n}(x, 0)) v_{n}(x, 0)\, dx=\int_{B_{R}\cap \Lambda_{\e}} g_{\e}(x, v(x, 0)) v(x, 0)\, dx+o_{n}(1),
\end{align*}
which combined with \eqref{HB11} and \eqref{HB12} gives \eqref{AM3}. Therefore, we shall prove that \eqref{HB13} is satisfied. Taking into account that $(v_{n})$ is a bounded tight sequence in $\x$, we may suppose that
\begin{align}\begin{split}\label{HB15}
&y^{1-2s} (|\nabla v_{n}|^{2}+m^{2}v_{n}^{2})\rightharpoonup \mu \quad \mbox{ weakly in the sense of measures, } \\
&v^{\2}_{n}(\cdot, 0)\rightharpoonup \nu \quad \mbox{ weakly in the sense of measures, }
\end{split}\end{align}
where $\mu$ and $\nu$ are two bounded nonnegative measures on $\R^{N+1}_{+}$ and $\R^{N}$, respectively. 
Thus, applying Proposition \ref{CCL}, we can find an at most countable index set $I$ and sequences $(x_{i})_{i\in I}\subset \R^{N}$, $(\mu_{i})_{i\in I}\subset (0, \infty)$, and $(\nu_{i})_{i\in I} \subset (0, \infty)$ such that
\begin{align}\begin{split}\label{HB16}
&\nu=v^{\2}(\cdot, 0)+\sum_{i\in I} \nu_{i} \delta_{x_{i}}, \\
&\mu\geq y^{1-2s} (|\nabla v|^{2}+m^{2}v^{2})+\sum_{i\in I} \mu_{i} \delta_{(x_{i}, 0)},  \\
&\mu_{i}\geq S_{*} \nu_{i}^{\frac{2}{2^{*}_{s}}}, \quad \mbox{ for all }  i\in I. 
\end{split}\end{align}
Let us show that $(x_{i})_{i\in I}\cap \Lambda_{\e}=\emptyset$. Assume, by contradiction, that $x_{i}\in \Lambda_{\e}$ for some $i\in I$. Fixed $\rho>0$, we define $\psi_{\rho}(x, y):=\psi(\frac{x-x_{i}}{\rho}, \frac{y}{\rho})$, where $\psi\in C^{\infty}_{c}(\overline{\R^{N+1}_{+}})$ is such that $\psi=1$ in $B_{\frac{1}{2}}^{+}$ and $\psi=0$ in $(B_{1}^{+})^{c}$, $0\leq \psi\leq 1$, and $\| \nabla \psi \|_{L^{\infty}(\R^{N+1}_{+})}\leq 2$. We suppose that $\rho>0$ is such that ${\rm supp}(\psi_{\rho}(\cdot, 0))\subset \Lambda_{\e}$. Because $(\psi_{\rho} v_{n})$ is bounded in $X_{\e}$, we have that $\langle J'_{\e}(v_{n}), \psi_{\rho}v_{n}\rangle=o_{n}(1)$, and so
\begin{align*}
&\iint_{\R^{N+1}_{+}} y^{1-2s}(|\nabla v_{n}|^{2}+m^{2}v_{n}^{2})\psi_{\rho}\, dx dy+\int_{\R^{N}} V_{\e}(x)v^{2}_{n}(x, 0)\psi_{\rho}(x, 0)\, dx \nonumber\\
&=-\iint_{\R^{N+1}_{+}} y^{1-2s} v_{n}\nabla v_{n}\cdot \nabla \psi_{\rho}\, dx dy+\int_{\R^{N}} f(v_{n}(x, 0)) \psi_{\rho}(x, 0) v_{n}(x, 0)\, dx+\int_{\R^{N}} \psi_{\rho}(x, 0) v^{\2}_{n}(x, 0) \, dx+o_{n}(1),
\end{align*}
or equivalently,
\begin{align}\label{HB17}
&\iint_{\R^{N+1}_{+}} y^{1-2s}(|\nabla v_{n}|^{2}+m^{2}v_{n}^{2})\psi_{\rho}\, dx dy+\int_{\R^{N}} (V_{\e}(x)+V_{1})v^{2}_{n}(x, 0)\psi_{\rho}(x, 0)\, dx \nonumber\\
&=V_{1}\int_{\R^{N}} v^{2}_{n}(x, 0) \psi_{\rho}(x, 0)\, dx+\int_{\R^{N}} f(v_{n}(x, 0)) \psi_{\rho}(x, 0) v_{n}(x, 0)\, dx+\int_{\R^{N}} \psi_{\rho}(x, 0) v^{\2}_{n}(x, 0) \, dx \nonumber\\
&\quad -\iint_{\R^{N+1}_{+}} y^{1-2s} v_{n} \nabla v_{n}\cdot \nabla \psi_{\rho}\, dx dy+o_{n}(1).
\end{align}
Since $f$ has subcritical growth and $\psi_{\rho}(\cdot, 0)$ has compact support, we can use \eqref{OSVALDO} to see that
\begin{align}\label{HB18}
\lim_{\rho \ri 0} \lim_{n\ri \infty} \int_{\R^{N}} f(v_{n}(x, 0)) \psi_{\rho}(x, 0) v_{n}(x, 0)\, dx=\lim_{\rho \ri 0} \int_{\R^{N}} f(v(x, 0)) \psi_{\rho}(x, 0) v(x, 0)\, dx=0
\end{align}
and 
\begin{align}\label{HB18+}
\lim_{\rho \ri 0} \lim_{n\ri \infty} \int_{\R^{N}} v^{2}_{n}(x, 0)\psi_{\rho}(x, 0)\, dx=\lim_{\rho \ri 0} \int_{\R^{N}} v^{2}(x, 0) \psi_{\rho}(x, 0)\, dx=0.
\end{align}
Now, we prove that
\begin{align}\label{HB19}
\lim_{\rho \ri 0} \limsup_{n\ri \infty} \iint_{\R^{N+1}_{+}} y^{1-2s} v_{n}\nabla v_{n}\cdot \nabla \psi_{\rho}\, dx dy=0.
\end{align}
From the H\"older inequality, $(v_{n})$ is bounded in $\x$, $\x$ is compactly embedded in $L^{2}(B_{\rho}^{+}(x_{i}, 0), y^{1-2s})$, and $\|\nabla \psi_{\rho}\|_{L^{\infty}(\R^{N+1}_{+})}\leq \frac{C}{\rho}$, we obtain that
\begin{align*}
&\limsup_{n\ri \infty}\left| \iint_{\R^{N+1}_{+}} y^{1-2s} v_{n}\nabla v_{n}\cdot \nabla \psi_{\rho}\, dx dy \right| \\
&\leq \limsup_{n\ri \infty} \left( \iint_{\R^{N+1}_{+}} y^{1-2s} |\nabla v_{n}|^{2}\, dx dy\right)^{\frac{1}{2}} \left( \iint_{B^{+}_{\rho}(x_{i}, 0)} y^{1-2s} v^{2}_{n} |\nabla \psi_{\rho}|^{2}\, dx dy\right)^{\frac{1}{2}}  \\
&\leq \frac{C}{\rho} \left( \iint_{B^{+}_{\rho}(x_{i}, 0)} y^{1-2s} v^{2} \, dx dy\right)^{\frac{1}{2}}.
\end{align*}
Applying the H\"older inequality with exponents $\gamma$ and $\frac{\gamma}{\gamma-1}$ and bearing in mind \eqref{weightedE}, we have that
\begin{align*}
&\frac{C}{\rho} \left(\iint_{B^{+}_{\rho}(x_{i}, 0)} y^{1-2s} v^{2} \, dx dy\right)^{\frac{1}{2}} \\
&\leq \frac{C}{\rho} \left(\iint_{B^{+}_{\rho}(x_{i}, 0)} y^{1-2s} v^{2\gamma} \, dx dy\right)^{\frac{1}{2\gamma}} \left(\iint_{B^{+}_{\rho}(x_{i}, 0)} y^{1-2s}  \, dx dy\right)^{\frac{\gamma-1}{2\gamma}} \\
&\leq C \left(\iint_{B^{+}_{\rho}(x_{i}, 0)} y^{1-2s} v^{2\gamma} \, dx dy\right)^{\frac{1}{2\gamma}}\ri 0 \quad\mbox{ as } \rho\ri 0.
\end{align*}
The aforementioned estimates show that \eqref{HB19} is satisfied. Therefore, from \eqref{HB16} and taking the limit as $n\ri \infty$ and $\rho\ri 0$ in \eqref{HB17}, we deduce that \eqref{HB18}, \eqref{HB18+}, \eqref{HB19} and $(V_{1})$ yield $\mu_{i}\leq \nu_{i}$. This fact combined with the last statement in \eqref{HB16} implies that
\begin{align}\label{HB19+}
\nu_{i}\geq S_{*}^{\frac{N}{2s}}.
\end{align}
Hence, exploiting $(f_{4})$, $(g_{3})$, ${\rm supp}(\psi_{\rho}(\cdot, 0))\subset \Lambda_{\e}$, and $0\leq \psi_{\rho}\leq 1$, we arrive at
\begin{align*}
c&=J_{\e}(v_{n})-\frac{1}{2}\langle J'_{\e}(v_{n}), v_{n}\rangle +o_{n}(1) \\
&=\int_{\Lambda^{c}_{\e}} \left[\frac{1}{2}g_{\e}(x, v_{n}(x, 0))v_{n}(x, 0)-G_{\e}(x, v_{n}(x, 0)) \right]\, dx+\int_{\Lambda_{\e}} \left[\frac{1}{2}f(v_{n}(x, 0))v_{n}(x, 0)-F(v_{n}(x, 0)) \right]\, dx\\
&\quad +\frac{s}{N} \int_{\Lambda_{\e}} v_{n}^{\2}(x, 0) \, dx+o_{n}(1) \\
&\geq  \frac{s}{N} \int_{\Lambda_{\e}} v_{n}^{\2}(x, 0) \, dx+o_{n}(1) \\
&\geq \frac{s}{N} \int_{\Lambda_{\e}} \psi_{\rho}(x, 0)v_{n}^{\2}(x, 0) \, dx+o_{n}(1).
\end{align*} 
Letting $n\ri \infty$ and using \eqref{HB16} and \eqref{HB19+}, we obtain
\begin{align*}
c\geq \frac{s}{N} \sum_{\{i\in I: x_{i}\in \Lambda_{\e} \}} \psi_{\rho}(x_{i}, 0) \nu_{i}\geq \frac{s}{N} \nu_{i}\geq \frac{s}{N} S_{*}^{\frac{N}{2s}},
\end{align*}
which leads to a contradiction because $c<\frac{s}{N}(\zeta S_{*})^{\frac{N}{2s}}$ and $\zeta\in (0, 1)$. Consequently, \eqref{HB13} holds, and we can conclude that $v_{n}\ri v$ in $X_{\e}$.
\end{proof}

\noindent
In light of Lemmas \ref{lemma1} and  \ref{lemma2}, we can apply the mountain pass theorem \cite{AR} 
to infer the next existence result for \eqref{MEP}.
\begin{thm}\label{lemAM2.8}
For all $\e>0$, there exists a nonnegative function $v_{\e}\in X_{\e}\setminus\{0\}$ such that 
\begin{align}\label{AM2.8}
J_{\e}(v_{\e})=c_{\e} \quad \mbox{ and } \quad J'_{\e}(v_{\e})=0.
\end{align} 
\end{thm}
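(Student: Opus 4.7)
The plan is to assemble the pieces already prepared in Lemmas \ref{lemma1}, \ref{UPPERB}, and \ref{lemma2}: the mountain pass geometry, the strict upper bound on the minimax level, and the local Palais-Smale condition. Since all the heavy analysis has been done, this is really a matter of invoking the standard deformation/mountain pass machinery.

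First I would apply Lemma \ref{lemma1} together with the mountain pass theorem without the Palais-Smale condition (Theorem 2.9 in \cite{W}) to produce a sequence $(v_n)\subset X_\e$ such that
\begin{align*}
J_\e(v_n)\to c_\e \quad \text{and} \quad J'_\e(v_n)\to 0 \text{ in } X_\e^*.
\end{align*}
The conditions $(i)$--$(iii)$ of Lemma \ref{lemma1} guarantee $c_\e\geq \alpha>0$, so $c_\e$ is a strictly positive minimax value.

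Next I would invoke Lemma \ref{UPPERB}, which provides the crucial strict upper bound $c_\e<\frac{s}{N}(\zeta S_*)^{N/(2s)}$. This puts the mountain pass level inside the range in which Lemma \ref{lemma2} applies, so the Palais-Smale condition holds at the level $c_\e$. Consequently, up to a subsequence, $v_n\to v_\e$ strongly in $X_\e$ as $n\to \infty$. The continuity of $J_\e$ and $J'_\e$ then yields $J_\e(v_\e)=c_\e$ and $J'_\e(v_\e)=0$, so $v_\e$ is a weak solution of the modified problem \eqref{MEP}. Finally, since $J_\e(0)=0<\alpha\leq c_\e=J_\e(v_\e)$, we have $v_\e\not\equiv 0$.

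There is really no serious obstacle left at this stage: all of the difficult work (the delicate cut-off/Bessel construction that gives the strict bound $c_\e<\frac{s}{N}(\zeta S_*)^{N/(2s)}$ in Lemma \ref{UPPERB}, and the concentration-compactness argument ruling out mass loss at critical points in $\Lambda_\e$ in Lemma \ref{lemma2}) has already been carried out. The only point requiring a line of care is checking that $c_\e$ lies strictly below the compactness threshold, which is exactly what Lemma \ref{UPPERB} supplies. Everything else is a textbook application of the mountain pass theorem of Ambrosetti--Rabinowitz \cite{AR}.
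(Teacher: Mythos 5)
Your proposal is correct and follows exactly the same route as the paper: the paper simply invokes Lemma \ref{lemma1} (mountain pass geometry), Lemma \ref{UPPERB} (the strict upper bound $c_\e<\frac{s}{N}(\zeta S_*)^{N/2s}$), and Lemma \ref{lemma2} (Palais--Smale in the subcritical range) and then cites the mountain pass theorem of Ambrosetti--Rabinowitz. You have merely written out the standard deformation argument that the paper leaves implicit, including the observation that $c_\e\geq\alpha>0$ forces $v_\e\neq 0$.
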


\begin{remark}
It is possible to give an alternative proof of Theorem \ref{lemAM2.8} without using Proposition \ref{CCL}. 
We outline the details. Let $(v_{n})\subset X_{\e}$ be a Palais-Smale sequence at the mountain pass level $c_{\e}$. By Lemma \ref{boundPS}, we know that $(v_{n})$ is bounded in $X_{\e}$. Exploiting $J_{\e}(v_{n})\ri c_{\e}>0$ and $\langle J'_{\e}(v_{n}), v_{n}\rangle=o_{n}(1)$, we can argue as in the proof of Lemma \ref{lem2.4AM} to prove that there exist $(z_{n})\subset \R^{N}$ and $r, \beta>0$ such that 
\begin{align*}
\liminf_{n\ri \infty}\int_{B_{r}(z_{n})} v_{n}^{2}(x, 0)\, dx\geq \beta.
\end{align*}
On the other hand, reasoning as in Remark \ref{ADELAIDE}, we have that for all $R>0$ such that $\Lambda_{\e}\subset B_{R/2}$, 
$$
\int_{B^{c}_{R}} v_{n}^{2}(x, 0)\, dx\leq \frac{C}{R}+\frac{C}{R^{2}}+o_{n}(1).
$$
In view of the aforementioned estimates, we deduce that the sequence $(z_{n})$ is bounded in $\R^{N}$. 
Now, because $(v_{n})$ is bounded in $X_{\e}$, up to a subsequence, we may assume that there exists $v_{\e}\in X_{\e}$ such that $v_{n}\rightharpoonup v_{\e}$ in $X_{\e}$, $v_{n}(\cdot, 0)\ri v_{\e}(\cdot, 0)$ in $L^{r}_{loc}(\R^{N})$ for all $r\in [1, \2)$, and $v_{n}(\cdot, 0)\ri v_{\e}(\cdot, 0)$ a.e. in $\R^{N}$. Then, it is easy to check that $v_{\e}$ is a critical point of $J_{\e}$. Moreover, we can see that $v_{\e}\not\equiv 0$. In fact, due to the boundedness of $(z_{n})$, we can find $k>0$ such that $B_{r}(z_{n})\subset B_{k}$ for all $n\in \mathbb{N}$. Consequently, 
$$
\int_{B_{k}} v_{\e}^{2}(x, 0)\, dx=\liminf_{n\ri \infty}\int_{B_{k}} v_{n}^{2}(x, 0)\, dx\geq \liminf_{n\ri \infty}\int_{B_{r}(z_{n})} v_{n}^{2}(x, 0)\, dx\geq \beta,
$$ 
which implies that $v_{\e}\not\equiv 0$.  Finally, to verify that $J_{\e}(v_{\e})=c_{\e}$, it suffices to use $(g_3)$ and Fatou's lemma to get 
\begin{align*}
c_{\e}\leq J_{\e}(v_{\e})-\frac{1}{2}\langle J'_{\e}(v_{\e}), v_{\e}\rangle\leq \liminf_{n\ri \infty} \left[J_{\e}(v_{n})-\frac{1}{2}\langle J'_{\e}(v_{n}), v_{n}\rangle\right]=c_{\e}.
\end{align*}
\end{remark}

\section{Autonomous critical problems}
Let $\mu>-V_{1}(1+\frac{1}{\kappa})$ and introduce the following autonomous problem related to \eqref{P}:
\begin{align}\label{AIKCP}
\left\{
\begin{array}{ll}
(-\Delta+m^{2})^{s}u+\mu u=f(u)+u^{\2-1} \mbox{ in } \R^{N}, \\
u\in H^{s}(\R^{N}), \quad u>0 \mbox{ in } \R^{N}.
\end{array}
\right.
\end{align}
The extended problem associated with \eqref{AIKCP} is given by:
\begin{align}\label{AEP}
\left\{
\begin{array}{ll}
-\dive(y^{1-2s} \nabla v)+m^{2}y^{1-2s}v=0 &\mbox{ in } \R^{N+1}_{+}, \\
\frac{\partial v}{\partial \nu^{1-2s}}= -\mu v(\cdot, 0)+f(v(\cdot, 0))+(v^{+}(\cdot, 0))^{\2-1} &\mbox{ on } \R^{N},
\end{array}
\right.
\end{align}
and the corresponding energy functional
\begin{align*}
L_{\mu}(v):=\frac{1}{2} \|v\|^{2}_{Y_{\mu}}-\int_{\R^{N}} \left[F(v(x, 0))+\frac{1}{\2}(v^{+}(x, 0))^{\2}\right]\, dx
\end{align*}
 is well defined on $Y_{\mu}:=\x$ endowed with the norm: 
\begin{align*}
\|v\|_{Y_{\mu}}:=\left( \|v\|^{2}_{\x}+\mu |v(\cdot, 0)|^{2}_{2}  \right)^{\frac{1}{2}}.
\end{align*}
To check that $\|\cdot\|_{Y_{\mu}}$ is a norm equivalent to $\|\cdot\|_{\x}$, one can argue as at pag. 5671 in \cite{ADCDS} (observe that $\mu>-V_{1}(1+\frac{1}{\kappa})>-m^{2s}$). We also note that, by \eqref{m-ineq}, for all $v\in Y_{\mu}$,
\begin{align}\label{DUPRESS}
\|v\|_{Y_{\mu}}^{2}\geq \zeta \|v\|_{\x}^{2}. 
\end{align}
Obviously, $Y_{\mu}$ is a Hilbert space with the inner product
\begin{align*}
\langle v, w\rangle_{Y_{\mu}}:=\iint_{\R^{N+1}_{+}} y^{1-2s} (\nabla v\cdot\nabla w+m^{2}vw)\, dx dy+\mu\int_{\R^{N}} v(x, 0) w(x, 0)\, dx, \quad\mbox{ for all } v, w\in Y_{\mu}.
\end{align*}
Denote by $\mathcal{M}_{\mu}$ the Nehari manifold associated with $L_{\mu}$, i.e.,
$$
\mathcal{M}_{\mu}:=\{v\in Y_{\mu}: \langle L'_{\mu}(v), v\rangle=0\}.
$$
As in the previous section, it is easy to verify that $L_{\mu}$ has a mountain pass geometry \cite{AR}. 
Thus, invoking a variant of the mountain pass theorem without the Palais-Smale condition (see \cite[Theorem 2.9]{W}),
we can find a Palais-Smale sequence $(v_{n})\subset \y$ at the mountain pass level $d_{\mu}$ of $L_{\mu}$ given by:
\begin{align*}
d_{\mu}:=\inf_{\gamma\in \Gamma_{\mu}} \max_{t\in [0, 1]} L_{\mu}(\gamma(t)),
\end{align*}
where 
$$
\Gamma_{\mu}:=\{\gamma\in C([0, 1], Y_{\mu}): \gamma(0)=0, \, L_{\mu}(\gamma(1))<0\}.
$$
We stress that $(v_{n})$ is bounded in $Y_{\mu}$. In fact, by $(f_3)$, 
\begin{align*}
C(1+\|v_{n}\|_{\y})&\geq L_{\mu}(v_{n})-\frac{1}{\theta}\langle L'_{\mu}(v_{n}), v_{n}\rangle\\
&=\left(\frac{1}{2}-\frac{1}{\theta}\right) \|v_{n}\|^{2}_{\y}+\frac{1}{\theta} \int_{\R^{N}} [f(v_{n}(x, 0))v_{n}(x, 0)-\theta F(v_{n}(x, 0))]\, dx+\left(\frac{1}{\theta}-\frac{1}{\2} \right)|v^{+}_{n}(\cdot, 0)|^{\2}_{\2}\\
&\geq \left(\frac{1}{2}-\frac{1}{\theta}\right) \|v_{n}\|^{2}_{\y}, 
\end{align*}
which implies the boundedness of $(v_{n})$ in $\y$.  
As in \cite{Rab, W}, by assumptions on $f$, we can see that
$$
0<d_{\mu}=\inf_{v\in \mathcal{M}_{\mu}} L_{\mu}(v)=\inf_{v\in \y\setminus\{0\}} \max_{t\geq 0}L_{\mu}(t v).
$$ 
Arguing as in the proof of Lemma \ref{UPPERB}, it is easy to prove that 
\begin{align}\label{BREZISN}
0<d_{\mu}<\frac{s}{N}(\zeta S_{*})^{\frac{N}{2s}}.
\end{align}
Our claim is to establish the existence of a ground state solution for \eqref{AEP}.
We start by recalling a vanishing Lions-type result.
\begin{lem}\label{Lions}\cite[Lemma 3.3]{ADCDS}
Let $t\in [2, \2)$ and $R>0$. If $(v_{n})\subset \x$ is a bounded sequence such that
$$
\lim_{n\ri \infty} \sup_{z\in \R^{N}} \int_{B_{R}(z)} |v_{n}(x, 0)|^{t}\, dx=0,
$$
then $v_{n}(\cdot, 0)\ri 0$ in $L^{r}(\R^{N})$ for all $r\in (2, \2)$.
\end{lem}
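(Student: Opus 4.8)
The plan is to reduce the statement to the classical Lions vanishing lemma for the trace spaces on $\mathbb{R}^N$. The key observation is that, by the trace inequality \eqref{traceineq} and Theorem \ref{Sembedding}, the boundedness of $(v_n)$ in $X^s(\mathbb{R}^{N+1}_+)$ forces $(v_n(\cdot,0))$ to be a bounded sequence in $H^s(\mathbb{R}^N)$ (equivalently in the fractional Sobolev space with the $m$-dependent norm), hence in $L^{2^*_s}(\mathbb{R}^N)$ and in all intermediate $L^r(\mathbb{R}^N)$, $r\in[2,2^*_s]$. So the whole question lives entirely on the boundary, and I only need the standard fact that if a bounded sequence in $H^s(\mathbb{R}^N)$ has vanishing $\sup_{z}\int_{B_R(z)}|\cdot|^t$ for some $t\in[2,2^*_s)$, then it converges to zero strongly in $L^r$ for every $r\in(2,2^*_s)$.

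First I would fix a finite covering of $\mathbb{R}^N$ by balls $B_R(z_j)$ with bounded overlap (a lattice covering), so that on each ball the fractional Sobolev (or Gagliardo–Nirenberg) interpolation inequality gives, for $r\in(2,2^*_s)$,
\begin{align*}
|v_n(\cdot,0)|_{L^r(B_R(z_j))}\leq C\,|v_n(\cdot,0)|_{L^t(B_R(z_j))}^{1-\alpha}\,\|v_n(\cdot,0)\|_{H^s(B_R(z_j))}^{\alpha}
\end{align*}
for a suitable $\alpha\in(0,1)$ (choosing $t$ small, say $t=2$, one may take any $r\in(2,2^*_s)$; in general one interpolates $L^r$ between $L^t$ and the Sobolev exponent and absorbs the Sobolev norm). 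Raising to the power $r$, summing over $j$, and using the bounded overlap of the covering together with the global $H^s$-bound coming from $\|v_n\|_{X^s}\le C$ via \eqref{traceineq}, one obtains
\begin{align*}
|v_n(\cdot,0)|_{r}^{r}\leq C\Big(\sup_{z\in\mathbb{R}^N}|v_n(\cdot,0)|_{L^t(B_R(z))}^{t}\Big)^{\frac{(1-\alpha)r}{t}}\,\big(\textstyle\sum_j\|v_n(\cdot,0)\|_{H^s(B_R(z_j))}^2\big)^{\frac{\alpha r}{2}}\leq C\Big(\sup_{z}|v_n(\cdot,0)|_{L^t(B_R(z))}^{t}\Big)^{\frac{(1-\alpha)r}{t}},
\end{align*}
provided $\alpha r\le 2$; if $\alpha r>2$ one first splits $r$ as a convex combination to reduce the Sobolev power to $2$. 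Letting $n\to\infty$ and invoking the hypothesis makes the right-hand side vanish, which gives $v_n(\cdot,0)\to 0$ in $L^r(\mathbb{R}^N)$.

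Since this is exactly Lemma 2.3 (or the analogous vanishing lemma) in \cite{ADCDS}, the cleanest route is simply to cite that reference: the norm $\|v\|_{X^s(\mathbb{R}^{N+1}_+)}$ controls $|v(\cdot,0)|_{H^s(\mathbb{R}^N)}$ by \eqref{traceineq}, so a bounded sequence in $X^s(\mathbb{R}^{N+1}_+)$ has boundary traces forming a bounded sequence in $H^s(\mathbb{R}^N)$, and the classical Lions lemma on $\mathbb{R}^N$ applies verbatim. I expect no genuine obstacle here; the only mild technical point is the interpolation step when $\alpha r>2$, which is handled by an elementary convexity splitting of the exponent $r$, and the bookkeeping of the bounded-overlap covering.
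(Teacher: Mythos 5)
Your proposal is correct, and since the paper simply cites \cite{ADCDS} for this lemma without reproducing the proof, there is no intrinsic argument to compare against line by line. The reduction you make is the natural one: by \eqref{traceineq} the $X^{s}(\R^{N+1}_{+})$-bound yields an $H^{s}(\R^{N})$-bound on the traces, and then the classical Lions lattice-covering plus interpolation argument (interpolating $L^{r}$ between $L^{t}$ and the Sobolev-trace exponent $\2$, choosing the interpolation exponent so that the Sobolev factor appears squared, then summing over the covering) gives the conclusion. Extension-method papers in this line, including \cite{ADCDS}, usually run the covering argument upstairs in $\R^{N+1}_{+}$ using half-balls $B^{+}_{R}(z_{j},0)$ and the \emph{local} weighted $H^{1}(\cdot,y^{1-2s})$ norm, combined with a trace-type inequality on each half-ball; this sidesteps the nonlocality of the fractional seminorm entirely. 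Your version works on the boundary instead, which is equally valid.

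One point worth flagging: the inequality $\sum_{j}\|v_{n}(\cdot,0)\|_{H^{s}(B_{R}(z_{j}))}^{2}\lesssim \|v_{n}(\cdot,0)\|_{H^{s}(\R^{N})}^{2}$ is not automatic for fractional $s$, since the Gagliardo seminorm on a ball is a double integral over $B\times B$ rather than a local quantity. It does hold, because for a lattice covering with bounded overlap every pair $(x,y)$ belongs to at most a bounded number of products $B_{R}(z_{j})\times B_{R}(z_{j})$; but the step deserves a sentence of justification (or, more economically, one replaces it by the manifestly local estimate on the extensions $\sum_{j}\|v_{n}\|_{H^{1}(B^{+}_{R}(z_{j},0),\,y^{1-2s})}^{2}\lesssim\|v_{n}\|_{\x}^{2}$ together with a local trace inequality). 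With that addition your argument is complete and is, in substance, the standard proof the cited lemma rests on.
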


\noindent
The next lemma is a critical version of \cite[Lemma 3.4]{ADCDS}.
\begin{lem}\label{Lions2}
Let $(v_{n})\subset \y$ be a Palais-Smale sequence for $L_{\mu}$ at the level $c<\frac{s}{N}(\zeta S_{*})^{\frac{N}{2s}}$ and such that $v_{n}\rightharpoonup 0$ in $\y$. Then, we have either
\begin{compactenum}[$(a)$]
\item $v_{n}\ri 0$ in $\y$, or
\item there exist a sequence $(z_{n})\subset \R^{N}$ and constants $R, \beta>0$ such that 
$$
\liminf_{n\ri \infty} \int_{B_{R}(z_{n})} v_{n}^{2}(x, 0)\, dx\geq \beta.
$$
\end{compactenum}
\end{lem}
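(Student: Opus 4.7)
\textbf{Proof plan for Lemma \ref{Lions2}.} I will argue by contradiction, assuming that alternative (b) fails so that for every $R>0$,
$$
\lim_{n\to\infty}\sup_{z\in\R^{N}}\int_{B_{R}(z)}v_{n}^{2}(x,0)\,dx=0,
$$
and then show that (a) must hold. By the vanishing lemma (Lemma \ref{Lions}) the assumption above implies $v_{n}(\cdot,0)\to 0$ in $L^{r}(\R^{N})$ for every $r\in(2,\2)$. Combined with $(f_{1})$, $(f_{2})$ this yields
$$
\int_{\R^{N}}f(v_{n}(\cdot,0))v_{n}(\cdot,0)\,dx\to 0,\qquad \int_{\R^{N}}F(v_{n}(\cdot,0))\,dx\to 0,
$$
so the only obstruction to strong convergence is the critical integral $\int(v_{n}^{+}(\cdot,0))^{\2}dx$.

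The core of the proof is therefore to show that this critical integral vanishes as well. After verifying the tightness of $(v_{n}^{+})$ in $X^{s}(\R^{N+1}_{+})$ by a cutoff test of the form $\langle L'_{\mu}(v_{n}),v_{n}^{+}\eta_{R}^{2}\rangle=o_{n}(1)$ (completely analogous to Lemma \ref{lemTIGHT}; here $\Lambda$ is absent so the argument is slightly simpler), I apply the concentration--compactness Proposition \ref{CCL} to the sequence $(v_{n}^{+})$ and obtain measures $\mu,\nu$ and families $(x_{i},\mu_{i},\nu_{i})_{i\in I}$ satisfying \eqref{47FS}--\eqref{49FS} with $v\equiv 0$ since $v_{n}\rightharpoonup 0$ in $Y_{\mu}$. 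Then, for every $i\in I$, I test Palais--Smale against $\psi_{\rho}v_{n}^{+}$ with $\psi_{\rho}(x,y)=\psi((x-x_{i})/\rho,y/\rho)$, exactly as in the proof of Lemma \ref{lemma2}. Letting $n\to\infty$ then $\rho\to 0$, the cross term $\iint y^{1-2s}v_{n}^{+}\nabla v_{n}\cdot\nabla\psi_{\rho}\,dx\,dy$ vanishes by H\"older and the compact embedding of $X^{s}$ into $L^{2}_{\rm loc}(\R^{N+1}_{+},y^{1-2s})$, the subcritical term $\int f(v_{n}(\cdot,0))\psi_{\rho}v_{n}^{+}(\cdot,0)\,dx$ vanishes by subcritical growth and Theorem \ref{Sembedding}, and the quadratic term $\mu\int(v_{n}^{+}(\cdot,0))^{2}\psi_{\rho}(\cdot,0)\,dx$ vanishes by compact embedding in $L^{2}_{\rm loc}(\R^{N})$. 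What remains yields $\mu_{i}\leq \nu_{i}$, which together with $\mu_{i}\geq S_{*}\nu_{i}^{2/\2}$ gives
$$
\nu_{i}\geq S_{*}^{N/(2s)}\qquad\text{for every } i\in I.
$$

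To derive the contradiction, I use the standard algebraic identity (which follows from $(f_{3})$ and the explicit form of the critical term)
$$
c=L_{\mu}(v_{n})-\tfrac{1}{2}\langle L'_{\mu}(v_{n}),v_{n}\rangle+o_{n}(1)=\int_{\R^{N}}\!\!\Bigl[\tfrac{1}{2}f(v_{n}(\cdot,0))v_{n}(\cdot,0)-F(v_{n}(\cdot,0))\Bigr]dx+\frac{s}{N}\int_{\R^{N}}(v_{n}^{+}(\cdot,0))^{\2}\,dx.
$$
The first integral tends to $0$ by the opening paragraph, while the second converges to $\frac{s}{N}\nu(\R^{N})\geq \frac{s}{N}\sum_{i\in I}\nu_{i}$. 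If $I\neq\emptyset$, this forces $c\geq \frac{s}{N}S_{*}^{N/(2s)}$, which contradicts $c<\frac{s}{N}(\zeta S_{*})^{N/(2s)}<\frac{s}{N}S_{*}^{N/(2s)}$ since $\zeta\in(0,1)$. Hence $I=\emptyset$ and $v_{n}^{+}(\cdot,0)\to 0$ in $L^{\2}(\R^{N})$.

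Finally, I conclude alternative (a) by testing Palais--Smale against $v_{n}$: $\|v_{n}\|_{Y_{\mu}}^{2}=\int f(v_{n}(\cdot,0))v_{n}(\cdot,0)\,dx+\int(v_{n}^{+}(\cdot,0))^{\2}\,dx+o_{n}(1)$. The critical piece just vanished; the subcritical piece is controlled via $|f(t)t|\leq \eta t^{2}+C_{\eta}|t|^{q}$, where the $|t|^{q}$-part vanishes because $q\in(2,\2)$ and the $t^{2}$-part is bounded (uniformly in $n$) by the $Y_{\mu}$-boundedness, so sending $n\to\infty$ first and then $\eta\to 0^{+}$ yields $\int f(v_{n})v_{n}\to 0$ and therefore $\|v_{n}\|_{Y_{\mu}}\to 0$. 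The main technical obstacle is the concentration step: precisely justifying $\mu_{i}\leq \nu_{i}$ at each atom via the $\psi_{\rho}v_{n}^{+}$ test, for which I will essentially recycle the cutoff calculations already carried out in the proof of Lemma \ref{lemma2}, together with the tightness argument needed to legitimately apply Proposition \ref{CCL}.
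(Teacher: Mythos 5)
Your outline is correct up to the point where you invoke Lemma~\ref{Lions} to kill the subcritical integrals, but the route you take from there is both much longer than the paper's and contains a genuine gap, precisely in the tightness step that you dismiss as ``completely analogous to Lemma~\ref{lemTIGHT}; here $\Lambda$ is absent so the argument is slightly simpler.'' That analogy does not hold. The tightness proof in Lemma~\ref{lemTIGHT} hinges on formula \eqref{HB2}, which in turn relies on the \emph{penalization} property $(g_3)(ii)$ — namely $g_\e(x,t)t \le \frac{V_1}{\kappa}t^2$ for $x\in\Lambda_\e^c$ — together with the choice $\Lambda_\e\subset B_{R/2}$: this is what lets one absorb the nonlinearity outside the ball into the quadratic form with a factor $V_1(1+1/\kappa)/m^{2s}<1$. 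In the autonomous equation there is no penalization; the nonlinearity $f(t)+(t^+)^{\2-1}$ has critical growth everywhere, and after testing $L'_\mu$ against $v_n^+\eta_R^2$ you are left with the term $\int (v_n^+(x,0))^{\2}\eta_R^2(x,0)\,dx$, which cannot be absorbed. If you try to absorb it via H\"older and the trace Sobolev inequality, you produce a factor $\ell^{2s/N}(\zeta S_*)^{-1}$, where $\ell:=\lim|v_n^+(\cdot,0)|_{\2}^{\2}$, and you need $\ell<(\zeta S_*)^{N/(2s)}$ for this factor to be $<1$ — but that is exactly what the lemma is trying to prove, so the step is circular. Your final deduction ``$I=\emptyset$ and $v_n^+(\cdot,0)\to 0$ in $L^{\2}(\R^N)$'' also requires tightness; without it, $I=\emptyset$ and $v=0$ only force the measure $\nu$ to vanish on compact sets, which does not preclude mass escaping to infinity.

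The paper avoids all of this. Once $v_n(\cdot,0)\to 0$ in $L^r(\R^N)$ for $r\in(2,\2)$ makes the $f$-terms vanish, the identity $\langle L'_\mu(v_n),v_n\rangle=o_n(1)$ yields $\|v_n\|_{Y_\mu}^2 = |v_n^+(\cdot,0)|_{\2}^{\2} + o_n(1)$, so both quantities converge to the same limit $\ell\ge 0$. If $\ell>0$, the energy identity gives $c=\frac{s}{N}\ell$, while the \emph{global} trace Sobolev inequality \eqref{BRANDLE} combined with \eqref{DUPRESS} gives $\zeta S_*\ell^{2/\2}\le\ell$, hence $\ell\ge(\zeta S_*)^{N/(2s)}$ and $c\ge\frac{s}{N}(\zeta S_*)^{N/(2s)}$, contradicting the hypothesis on the level. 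Thus $\ell=0$, which is exactly alternative $(a)$. No concentration--compactness, cutoff tests, tightness verification, or atom analysis is needed: because $v_n\rightharpoonup 0$ already removes the weak limit and the subcritical part, a single application of the Sobolev inequality on all of $\R^N$ closes the argument.
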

\begin{proof}
Assume that $(b)$ does not occur. Therefore, for all $R>0$, it holds
$$
\lim_{n\ri \infty} \sup_{z\in \R^{N}} \int_{B_{R}(z)} v_{n}^{2}(x, 0)\, dx=0.
$$
By Lemma \ref{Lions}, we know that $v_{n}(\cdot, 0)\ri 0$ in $L^{r}(\R^{N})$ for all $r\in (2, \2)$. 
This fact and $(f_1)$-$(f_2)$ imply that 
\begin{align}\label{HB20}
\int_{\R^{N}}f(v_{n}(x, 0))v_{n}(x, 0)\, dx=\int_{\R^{N}}F(v_{n}(x, 0))\, dx=o_{n}(1).
\end{align}
Exploiting $\langle L'_{\mu}(v_{n}), v_{n}\rangle=o_{n}(1)$ and \eqref{HB20}, we see that 
\begin{align*}
\|v_{n}\|^{2}_{\y}=|v_{n}^{+}(\cdot, 0)|_{\2}^{\2}+ o_{n}(1).
\end{align*}
Because $(v_{n})$ is bounded in $Y_{\mu}$, we may assume that there exists $\ell\geq 0$ such that
\begin{align}\label{HB21}
\|v_{n}\|^{2}_{\y}\ri \ell  \quad \mbox{ and }  \quad |v_{n}^{+}(\cdot, 0)|_{\2}^{\2}\ri \ell.
\end{align} 
Suppose by contradiction that $\ell>0$. By virtue of $L_{\mu}(v_{n})=d_{\mu}+o_{n}(1)$, \eqref{HB20}, and \eqref{HB21}, we have
\begin{align*}
d_{\mu}+o_{n}(1)&= L_{\mu}(v_{n})=\frac{1}{2}\|v_{n}\|^{2}_{Y_{\mu}}-\int_{\R^{N}}  F(v_{n}(x, 0))\, dx-\frac{1}{\2} |v_{n}^{+}(\cdot, 0)|_{\2}^{\2} \\
&=\frac{\ell}{2}-\frac{\ell}{\2}+o_{n}(1)=\frac{s}{N}\ell+o_{n}(1),
\end{align*}
i.e., 
\begin{align}\label{HB22}
\ell=\frac{N}{s}d_{\mu}. 
\end{align}
On the other hand, by Theorem \ref{Sembedding} and \eqref{DUPRESS},  
\begin{align*}
\zeta S_{*}|v^{+}_{n}(\cdot, 0)|_{\2}^{2}\leq \zeta S_{*} |v_{n}(\cdot, 0)|_{\2}^{2}\leq \zeta \|v_{n}\|^{2}_{\x}\leq \|v_{n}\|_{Y_{\mu}}^{2},
\end{align*}
and passing to the limit as $n\ri \infty$, we arrive at
\begin{align*}
\zeta S_{*}\ell^{\frac{2}{\2}}\leq \ell.
\end{align*}
Taking \eqref{HB22} into account, we obtain that $d_{\mu}\geq \frac{s}{N} (\zeta S_{*})^{\frac{N}{2s}}$, which is impossible in view of \eqref{BREZISN}. Then, $\ell=0$, and this completes the proof.
\end{proof}

\noindent
Now we are ready to provide the main result of this section.
\begin{thm}\label{EGS}
Let $\mu>-V_{1}(1+\frac{1}{\kappa})$.
Then, \eqref{AIKCP} has a ground state solution.
\end{thm}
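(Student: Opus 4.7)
The natural strategy is the classical one for autonomous problems on $\R^{N}$: extract a bounded Palais--Smale sequence at the mountain pass level, use translation invariance of $L_{\mu}$ to restore compactness, show that the weak limit of the (possibly translated) sequence is a nontrivial critical point, and then upgrade it to a positive ground state. The key ingredients are already in place: the mountain pass geometry of $L_{\mu}$, the strict bound $0<d_{\mu}<\tfrac{s}{N}(\zeta S_{*})^{N/2s}$ of \eqref{BREZISN}, the splitting Lemma \ref{Lions2}, and the Nehari characterization $d_{\mu}=\inf_{\M_{\mu}}L_{\mu}$.

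First I would pick a Palais--Smale sequence $(v_{n})\subset \y$ with $L_{\mu}(v_{n})\to d_{\mu}$ and $L'_{\mu}(v_{n})\to 0$ in $\y^{*}$; the $(f_{3})$-type computation recalled in the paper shows it is bounded, so, up to a subsequence, $v_{n}\rightharpoonup v$ in $\y$. If $v\not\equiv 0$, I pass to the limit in $\langle L'_{\mu}(v_{n}),\varphi\rangle=o_{n}(1)$ for $\varphi\in C^{\infty}_{c}(\overline{\R^{N+1}_{+}})$: the quadratic part uses weak convergence in $\y$; the subcritical piece uses $(f_{1})$, $(f_{2})$ together with the compact embedding of Theorem \ref{Sembedding} into $L^{r}_{loc}(\R^{N})$ with $r\in(2,\2)$; the critical piece $(v^{+}_{n})^{\2-1}$ is bounded in $L^{(\2)'}(\R^{N})$ and converges a.e.\ to $(v^{+})^{\2-1}$, hence weakly in $L^{(\2)'}$, which suffices when tested against $\varphi(\cdot,0)\in L^{\2}(\R^{N})$. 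If instead $v\equiv 0$, Lemma \ref{Lions2} applies: alternative $(a)$ gives $v_{n}\to 0$ in $\y$, contradicting $d_{\mu}>0$; so $(b)$ holds, yielding $(z_{n})\subset \R^{N}$ and $R,\beta>0$ with $\liminf_{n}\int_{B_{R}(z_{n})}v_{n}^{2}(x,0)\,dx\geq \beta$. The autonomy of $L_{\mu}$ makes $\tilde{v}_{n}(x,y):=v_{n}(x+z_{n},y)$ a bounded Palais--Smale sequence at the same level, whose weak limit $\tilde{v}$ is nontrivial since $\int_{B_{R}}\tilde{v}^{2}(x,0)\,dx\geq\beta$ by the local compactness of the trace; the previous passage to the limit then yields $L'_{\mu}(\tilde{v})=0$. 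In either branch I rename the resulting nontrivial critical point $v\in \M_{\mu}$, so $L_{\mu}(v)\geq d_{\mu}$.

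To identify the energy, I use the Nehari representation: since $\langle L'_{\mu}(v_{n}),v_{n}\rangle=o_{n}(1)$ and $(v_{n})$ is bounded,
\begin{align*}
d_{\mu}
&=\lim_{n\to\infty}\left[L_{\mu}(v_{n})-\tfrac{1}{2}\langle L'_{\mu}(v_{n}),v_{n}\rangle\right] \\
&=\lim_{n\to\infty}\int_{\R^{N}}\!\left[\tfrac{1}{2}f(v_{n}(x,0))v_{n}(x,0)-F(v_{n}(x,0))\right]dx+\tfrac{s}{N}\lim_{n\to\infty}|v_{n}^{+}(\cdot,0)|_{\2}^{\2}.
\end{align*}
Both integrands are nonnegative (by $(f_{3})$ and trivially) and converge a.e., so Fatou's lemma gives $d_{\mu}\geq L_{\mu}(v)-\tfrac{1}{2}\langle L'_{\mu}(v),v\rangle=L_{\mu}(v)$. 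Combined with $L_{\mu}(v)\geq d_{\mu}$ from $v\in \M_{\mu}$, this forces $L_{\mu}(v)=d_{\mu}$, so $v$ is a ground state.

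Finally, testing $L'_{\mu}(v)=0$ against $v^{-}\in \y$ and using that $f(t)=0$ for $t\leq 0$ together with $(v^{+})^{\2-1}v^{-}=0$ yields $\|v^{-}\|_{\y}^{2}=0$, so $v\geq 0$. A Harnack-type strong maximum principle for the degenerate extension operator $-{\rm div}(y^{1-2s}\nabla\cdot)+m^{2}y^{1-2s}\cdot$, applied as in the estimates already used in the paper, then upgrades this to $v>0$ on $\overline{\R^{N+1}_{+}}$, and $u:=v(\cdot,0)$ is the desired positive ground state of \eqref{AIKCP}. The main obstacle is the interplay of the critical exponent with the loss of compactness on the whole $\R^{N}$: without the strict inequality \eqref{BREZISN}, vanishing cannot be excluded in Lemma \ref{Lions2}, and without the sign structure in the Nehari representation above, Fatou's lemma cannot deliver the upper bound $L_{\mu}(v)\leq d_{\mu}$; everything else is a careful (but standard) bookkeeping in the extended Hilbert space $\y$.
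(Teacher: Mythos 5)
Your proposal is correct and follows essentially the same route as the paper's proof: a bounded Palais--Smale sequence at $d_{\mu}$, Lemma~\ref{Lions2} to recover a nontrivial weak limit after translation, Fatou's lemma combined with the Nehari characterization to force $L_{\mu}(v)=d_{\mu}$, and then sign/Harnack arguments to get positivity. The only cosmetic deviation is that you use the multiplier $\tfrac12$ rather than $\tfrac1\theta$ in the quantity $L_{\mu}(v_{n})-\tau\langle L'_{\mu}(v_{n}),v_{n}\rangle$ before passing to Fatou; both choices keep the integrands nonnegative (by $(f_{3})$/$(f_{4})$) and yield the same conclusion.
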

\begin{proof}
Since $L_{\mu}$ has a mountain pass geometry \cite{AR}, we can find a Palais-Smale sequence $(v_{n})\subset \y$ at the level $d_{\mu}$. Hence, $(v_{n})$ is bounded in $\y$, and so, up to a subsequence, we may suppose that there exists $v\in \y$ such that $v_{n}\rightharpoonup v$ in $\y$. Using the growth assumptions on $f$ and the density of $C^{\infty}_{c}(\overline{\R^{N+1}_{+}})$ in $\x$, it is standard to verify that $\langle L'_{\mu}(v), \varphi\rangle =0$ for all $\varphi\in \y$. If $v\equiv 0$, we can apply Lemma \ref{Lions2} to deduce that for some sequence $(z_{n})\subset \R^{N}$, $\bar{v}_{n}(x, y):=v_{n}(x+z_{n}, y)$ is a bounded Palais-Smale sequence at the level $d_{\mu}$ and $\bar{v}_{n}\rightharpoonup \bar{v}\not\equiv 0$ in $Y_{\mu}$. Thus, $\bar{v}\in \mathcal{M}_{\mu}$.
Moreover, by $(f_3)$ and Fatou's lemma, 
\begin{align*}
d_{\mu}&\leq L_{\mu}(\bar{v})\\
&=L_{\mu}(\bar{v})-\frac{1}{\theta} \langle L'_{\mu}(\bar{v}), \bar{v} \rangle\\
&=\left(\frac{1}{2}-\frac{1}{\theta}\right)\|\bar{v}\|^{2}_{Y_{\mu}}+\frac{1}{\theta} \int_{\R^{N}} [\bar{v}(x, 0)f(\bar{v}(x, 0))-\theta F(\bar{v}(x, 0))]\, dx+\left(\frac{1}{\theta}-\frac{1}{\2} \right)|\bar{v}^{+}(\cdot, 0)|^{\2}_{\2} \\
&\leq \liminf_{n\ri \infty} \left[ \left(\frac{1}{2}-\frac{1}{\theta}\right)\|\bar{v}_{n}\|^{2}_{Y_{\mu}}+\frac{1}{\theta} \int_{\R^{N}} [\bar{v}_{n}(x, 0)f(\bar{v}_{n}(x, 0))-\theta F(\bar{v}_{n}(x, 0))]\, dx+\left(\frac{1}{\theta}-\frac{1}{\2} \right)|\bar{v}_{n}^{+}(\cdot, 0)|^{\2}_{\2} \right] \\
&=\liminf_{n\ri \infty} \left[L_{\mu}(\bar{v}_{n})-\frac{1}{\theta} \langle L'_{\mu}(\bar{v}_{n}), \bar{v}_{n} \rangle\right]=d_{\mu},
\end{align*}
and so $L_{\mu}(\bar{v})=d_{\mu}$.
When $v\not\equiv 0$, as before, we can prove that $v$ is a ground state solution to \eqref{AEP}.
Consequently,
for each $\mu>-V_{1}(1+\frac{1}{\kappa})$,
there exists a ground state solution $w=w_{\mu}\in \y\setminus\{0\}$ such that
\begin{align*}
L_{\mu}(w)=d_{\mu} \quad \mbox{ and } \quad L'_{\mu}(w)=0.
\end{align*} 
As $f(t)=0$ for $t\leq 0$, it follows from $\langle L'_{\mu}(w), w^{-} \rangle=0$  that $w\geq 0$ in $\R^{N+1}_{+}$ and $w\not\equiv 0$. 
A standard Moser iteration argument (see, for instance, \cite[Lemma 4.4.1]{AmbrosioBOOK}, \cite[Lemma 4.1]{AJDE} or Lemma \ref{moser}) shows that $w(\cdot, 0)\in L^{r}(\R^{N})$ for all $r\in [2, \infty]$. 
According to \cite[Corollary 3]{ADCDS}, we know that $w(\cdot, 0)\in C^{0, \alpha}(\R^{N})$ for some $\alpha\in (0, 1)$. 
By the weak Harnack inequality \cite[Proposition 2]{FF}, we obtain that $w(\cdot, 0)>0$ in $\R^{N}$.
\end{proof}

\noindent
We conclude this section by establishing an important relation between $c_{\e}$ and $d_{V(0)}=d_{-V_{0}}$ 
(note that $V(0)=-V_{0}>-V_{1}(1+\frac{1}{\kappa})$ thanks to $0\in M$ and $V_{1}-V_{0}\geq 0>-\frac{V_{1}}{\kappa}$).
\begin{lem}\label{lem2.3AM}
The numbers $c_{\e}$ and $d_{V(0)}$ verify the following inequality:
$$
\limsup_{\e\ri 0} c_{\e}\leq d_{V(0)}<\frac{s}{N}(\zeta S_{*})^{\frac{N}{2s}}.
$$
\end{lem}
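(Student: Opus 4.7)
The second inequality $d_{V(0)}<\frac{s}{N}(\zeta S_{*})^{\frac{N}{2s}}$ is immediate from \eqref{BREZISN} applied with $\mu=V(0)=-V_{0}$, since $V(0)>-V_{1}(1+\frac{1}{\kappa})$ as already observed. For the first inequality $\limsup_{\e\ri 0}c_{\e}\leq d_{V(0)}$, my plan is to test the minimax characterization $c_{\e}=\inf_{v\in X_{\e}\setminus\{0\}}\max_{t\geq 0}J_{\e}(tv)$ against a compactly supported truncation of a positive ground state of the autonomous problem at $\mu=V(0)$, and then pass to the limit first in $\e$ and then in the truncation parameter.

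Concretely, let $w\in Y_{V(0)}$ be the positive ground state given by Theorem \ref{EGS}, so $L_{V(0)}(w)=d_{V(0)}$ and $w\in\mathcal{M}_{V(0)}$. Fix a cut-off $\phi_{R}(x,y):=\phi_{0}(|(x,y)|/R)$, with $\phi_{0}\in C^{\infty}([0,\infty))$ non-increasing, $\phi_{0}\equiv 1$ on $[0,1]$, $\phi_{0}\equiv 0$ on $[2,\infty)$, and set $w_{R}:=\phi_{R}w$. By density of $C^{\infty}_{c}(\overline{\R^{N+1}_{+}})$ in $X^{s}(\R^{N+1}_{+})$ one has $w_{R}\ri w$ in $X^{s}(\R^{N+1}_{+})$, and hence in $Y_{V(0)}$ by norm equivalence. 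Since $0\in M$ lies in the interior of $\Lambda$, pick $r_{0}>0$ with $B_{r_{0}}\subset\Lambda$; then for every fixed $R\geq 1$ and every $\e\in(0,r_{0}/(2R))$ we have $\supp(w_{R}(\cdot,0))\subset B_{2R}\subset \Lambda_{\e}$, so $w_{R}\in X_{\e}$ and $g_{\e}(x,w_{R}(x,0))=f(w_{R}(x,0))+(w_{R}^{+}(x,0))^{\2-1}$. Therefore
\begin{align*}
J_{\e}(tw_{R})=\tfrac{t^{2}}{2}\|w_{R}\|^{2}_{X^{s}(\R^{N+1}_{+})}+\tfrac{t^{2}}{2}\int_{\R^{N}}V(\e x)w_{R}^{2}(x,0)\,dx-\int_{\R^{N}}F(tw_{R}(x,0))\,dx-\tfrac{t^{\2}}{\2}|w_{R}^{+}(\cdot,0)|_{\2}^{\2},
\end{align*}
and by the continuity of $V$ at $0$ and dominated convergence, $J_{\e}(tw_{R})\ri L_{V(0)}(tw_{R})$ as $\e\ri 0$ uniformly on compact intervals of $t\geq 0$.

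By $(f_{4})$ the function $t\mapsto L_{V(0)}(tw_{R})$ has a unique positive maximizer $t_{R}$, and a standard argument using $(f_{1})$--$(f_{3})$ shows that the corresponding maximizers $t_{\e}$ of $t\mapsto J_{\e}(tw_{R})$ stay in a fixed compact subset of $(0,\infty)$ for all $\e$ small. Consequently $\max_{t\geq 0}J_{\e}(tw_{R})\ri\max_{t\geq 0}L_{V(0)}(tw_{R})$ as $\e\ri 0$, and the inequality $c_{\e}\leq \max_{t\geq 0}J_{\e}(tw_{R})$ yields $\limsup_{\e\ri 0}c_{\e}\leq \max_{t\geq 0}L_{V(0)}(tw_{R})$. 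Letting $R\ri\infty$, the continuity of the fibering map $v\mapsto \max_{t\geq 0}L_{V(0)}(tv)$ on $Y_{V(0)}\setminus\{0\}$, together with $L_{V(0)}(w)=\max_{t\geq 0}L_{V(0)}(tw)=d_{V(0)}$, gives $\max_{t\geq 0}L_{V(0)}(tw_{R})\ri d_{V(0)}$, which completes the argument. The only delicate point is the uniform control of the maximizers $t_{\e}$, but this is routine once one observes that $V_{\e}\ri V(0)>-m^{2s}$ uniformly on $\supp(w_{R}(\cdot,0))$, so that \eqref{m-ineq} keeps the quadratic coefficient uniformly positive and bounded in $\e$.
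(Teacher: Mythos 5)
Your proof is correct and follows essentially the same strategy as the paper: test the minimax characterization of $c_{\e}$ against a compactly supported truncation of the positive ground state $w$ of the autonomous limit problem, use that the truncation lands in $\Lambda_{\e}$ so that $g_{\e}=f+(\cdot)^{\2-1}$ there, and identify the $\e$-dependent correction $\frac{t^{2}}{2}\int(V_{\e}-V(0))w^{2}(\cdot,0)$ as vanishing by dominated convergence. The only organizational difference is that the paper ties the truncation scale to $\e$, i.e.\ uses $w_{\e}(x,y)=\eta(\e|(x,y)|)w(x,y)$ in a single limit and shows directly that the maximizer $t_{\e}\ri 1$ (since $w\in\mathcal{M}_{V(0)}$), whereas you use a fixed cut-off radius $R$, take $\e\ri 0$ first, and then pass $R\ri\infty$, which forces an extra appeal to continuity of $v\mapsto\max_{t\geq0}L_{V(0)}(tv)$; both work, but the paper's coupled limit sidesteps that last step.
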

\begin{proof}
In light of Theorem \ref{EGS}, there exists a ground state solution $w$ to \eqref{AEP} with $\mu=V(0)$. 
Take $\eta\in C^{\infty}_{c}(\R)$ such that $0\leq \eta\leq 1$, $\eta=1$ in $[-1, 1]$ and $\eta=0$ in $\R\setminus (-2, 2)$. Suppose that $B_{2}\subset \Lambda$. Define $w_{\e}(x, y):=\eta(\e |(x, y)|) w(x, y)$ and note that $\supp(w_{\e}(\cdot, 0))\subset \Lambda_{\e}$. 
It is easy to see that $w_{\e}\ri w$ in $\x$ and that $L_{V(0)}(w_{\e})\ri L_{V(0)}(w)$ as $\e\ri 0$.
On the other hand, by the definition of $c_{\e}$, we have
\begin{align}\label{15ADOM}
c_{\e}\leq \max_{t\geq 0} J_{\e}(t w_{\e})=J_{\e}(t_{\e} w_{\e})=\frac{t^{2}_{\e}}{2} \|w_{\e}\|^{2}_{\e}-\int_{\R^{N}} \left[F(t_{\e} w_{\e}(x, 0))+\frac{1}{\2}(t_{\e}w_{\e}(x, 0))^{\2}\right]\, dx,
\end{align}
for some $t_{\e}>0$. Using $w\in \mathcal{M}_{V(0)}$ and $(f_4)$, we deduce that $t_{\e}\ri 1$ as $\e\ri 0$.
Observe that
\begin{align}\label{16ADOM}
J_{\e}(t_{\e}w_{\e})=L_{V(0)}(t_{\e}w_{\e})+\frac{t^{2}_{\e}}{2}\int_{\R^{N}} (V_{\e}(x)-V(0)) w_{\e}^{2}(x, 0)\, dx.
\end{align}
Then, since $V_{\e}(x)$ is bounded on the support of $w_{\e}(\cdot, 0)$ and $V_{\e}(x)\ri V(0)$ as $\e\ri 0$, we can exploit the dominated convergence theorem, \eqref{BREZISN}, \eqref{15ADOM}, and \eqref{16ADOM} to reach the desired conclusion.
\end{proof}

\begin{remark}\label{REMARKTERMOSIFONE}
From $(V_1)$ and $(g_2)$, we derive that $c_{\e}\geq d_{-V_{1}}>0$ for all $\e>0$.
\end{remark}

\section{Proof of Theorem \ref{thm1}}
This section is devoted to the proof of Theorem \ref{thm1}. Let us recall that, by Theorem \ref{lemAM2.8},
for all $\e>0$ there exists a nonnegative mountain pass solution $v_{\e}$ to \eqref{MEP}. We begin with a useful result.
\begin{lem}\label{lem2.4AM}
There exist $r, \beta, \e^{*}>0$ and $(y_{\e})\subset \R^{N}$ such that 
$$
\int_{B_{r}(y_{\e})} v_{\e}^{2}(x, 0)\, dx\geq \beta, \quad \mbox{ for all } \e\in (0, \e^{*}).
$$
\end{lem}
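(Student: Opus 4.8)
Suppose, for contradiction, that the statement fails. Since for each fixed $\e$ the map $z\mapsto\int_{B_{1}(z)}v_{\e}^{2}(x,0)\,dx$ is continuous and tends to $0$ as $|z|\ri\infty$, this produces a sequence $\e_{n}\ri 0$ with $\sup_{z\in\R^{N}}\int_{B_{1}(z)}v_{\e_{n}}^{2}(x,0)\,dx\ri 0$. Two uniform estimates are needed at the outset. First, by Lemma \ref{lem2.3AM} we have $c_{\e_{n}}\le C$ for $n$ large, and then the computation in the proof of Lemma \ref{boundPS}, applied to the exact critical point $v_{\e_{n}}$ (so that $J_{\e}(v_{\e})=c_{\e}$ and $J'_{\e}(v_{\e})=0$, see \eqref{AM2.8}), gives $\sup_{n}\|v_{\e_{n}}\|_{\e_{n}}<\infty$; by \eqref{equivalent}, $t_{n}:=\|v_{\e_{n}}\|_{\x}^{2}$ is then bounded. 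Second, inserting \eqref{growthg}, \eqref{m-ineq}, \eqref{equivalent} and \eqref{BRANDLE} into the identity $\langle J'_{\e_{n}}(v_{\e_{n}}),v_{\e_{n}}\rangle=0$ yields an $\e$-independent lower bound $\|v_{\e_{n}}\|_{\e_{n}}\ge r_{0}>0$.

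By the vanishing Lemma \ref{Lions} (with $t=2$, $R=1$) we obtain $v_{\e_{n}}(\cdot,0)\ri 0$ in $L^{r}(\R^{N})$ for every $r\in(2,\2)$, and hence, using $(f_{1})$--$(f_{2})$ and the bound on $t_{n}$, $\int_{\Lambda_{\e_{n}}}f(v_{\e_{n}}(x,0))v_{\e_{n}}(x,0)\,dx=o_{n}(1)$. Now I would exploit the Nehari identity $\|v_{\e_{n}}\|_{\e_{n}}^{2}=\int_{\R^{N}}g_{\e_{n}}(x,v_{\e_{n}}(x,0))v_{\e_{n}}(x,0)\,dx$: bounding the integrand by $f(v_{\e_{n}})v_{\e_{n}}+(v_{\e_{n}}^{+})^{\2}$ via $(g_{2})$, but on $\Lambda_{\e_{n}}^{c}$ by the finer $(g_{3})$-$(ii)$, and using $V_{\e_{n}}\ge -V_{1}$ together with \eqref{m-ineq}, I expect the estimate
\[
\zeta\,t_{n}\le b_{n}+o_{n}(1),\qquad b_{n}:=\int_{\Lambda_{\e_{n}}}(v_{\e_{n}}^{+}(x,0))^{\2}\,dx .
\]
The quantity $b_{n}$ can then be bounded from above in two independent ways. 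From \eqref{BRANDLE} we have $b_{n}\le\int_{\R^{N}}|v_{\e_{n}}(x,0)|^{\2}\,dx$ together with $\bigl(\int_{\R^{N}}|v_{\e_{n}}(x,0)|^{\2}\,dx\bigr)^{\frac{2}{\2}}\le S_{*}^{-1}t_{n}$, so $t_{n}\ge S_{*}b_{n}^{\frac{2}{\2}}$. On the other hand, writing $c_{\e_{n}}=J_{\e_{n}}(v_{\e_{n}})-\tfrac12\langle J'_{\e_{n}}(v_{\e_{n}}),v_{\e_{n}}\rangle$ and discarding the nonnegative contributions coming from $(f_{3})$ on $\Lambda_{\e_{n}}$ and from $(g_{3})$-$(ii)$ on $\Lambda_{\e_{n}}^{c}$ exactly as in the proof of Lemma \ref{lemma2}, one gets $b_{n}\le\tfrac{N}{s}c_{\e_{n}}$.

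It remains to combine these facts. From $\zeta t_{n}\le b_{n}+o_{n}(1)$ and $t_{n}\ge S_{*}b_{n}^{\frac{2}{\2}}$ we get $\zeta S_{*}b_{n}^{\frac{2}{\2}}\le b_{n}+o_{n}(1)$. Passing to a subsequence along which $b_{n}\ri\bar b$: if $\bar b>0$, then $\zeta S_{*}\bar b^{\frac{2}{\2}}\le\bar b$, i.e.\ $\bar b\ge(\zeta S_{*})^{\frac{N}{2s}}$, which is impossible because $b_{n}\le\tfrac{N}{s}c_{\e_{n}}$ and $\limsup_{n}c_{\e_{n}}\le d_{V(0)}<\tfrac{s}{N}(\zeta S_{*})^{\frac{N}{2s}}$ by Lemma \ref{lem2.3AM} and \eqref{BREZISN}; if $\bar b=0$, then $\zeta t_{n}\ri 0$, so $\|v_{\e_{n}}\|_{\x}\ri 0$, whence $|v_{\e_{n}}(\cdot,0)|_{2}\ri 0$ and (again via \eqref{BRANDLE}) $|v_{\e_{n}}(\cdot,0)|_{\2}\ri 0$, so the Nehari identity forces $\|v_{\e_{n}}\|_{\e_{n}}\ri 0$, contradicting $\|v_{\e_{n}}\|_{\e_{n}}\ge r_{0}$. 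In both cases we reach a contradiction, which proves the lemma. The delicate point is precisely this last step: since $\2$ is the critical exponent, the failure of the concentration statement does \emph{not} make $|v_{\e_{n}}(\cdot,0)|_{\2}$ vanish, so Lemma \ref{Lions} alone is not conclusive; the argument closes only thanks to the strict gap $d_{V(0)}<\tfrac{s}{N}(\zeta S_{*})^{\frac{N}{2s}}$, which allows the Sobolev trace inequality to be balanced against the energy bound.
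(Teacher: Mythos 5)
Your argument is correct and follows essentially the same route as the paper's proof: assume the concentration fails, invoke the vanishing Lions lemma to kill all subcritical contributions, isolate the critical $L^{\2}$ mass, and close the contradiction by pinching that mass between the Sobolev trace inequality \eqref{BRANDLE} (which forces any nonzero limit to be at least $(\zeta S_*)^{N/2s}$) and the energy bound $c_{\e_n}\le d_{V(0)}<\tfrac{s}{N}(\zeta S_*)^{N/2s}$ from Lemma \ref{lem2.3AM}, with the zero-limit case ruled out by the uniform lower bound on $\|v_{\e_n}\|_{\e_n}$. The only cosmetic differences are bookkeeping: the paper tracks the single quantity $\ell=\lim\big(\|v_{\e_n}\|_{\e_n}^2-\tfrac{V_1}{\kappa}\int_{\Lambda_{\e_n}^c\cap\{v>a\}}v^2\big)$ and uses $J_{\e_n}-\tfrac{1}{\2}\langle J'_{\e_n},v_{\e_n}\rangle$ to get $\tfrac{s}{N}\ell\le\liminf c_{\e_n}$, whereas you split into $t_n=\|v_{\e_n}\|_{\x}^2$ and $b_n=\int_{\Lambda_{\e_n}}(v_{\e_n}^+)^{\2}$ and use $J_{\e_n}-\tfrac12\langle J'_{\e_n},v_{\e_n}\rangle$ to get $b_n\le\tfrac{N}{s}c_{\e_n}$; these are equivalent rearrangements of the same computation.
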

\begin{proof}
On account of \eqref{AM2.8} and the growth conditions on $f$, we can find $\alpha>0$, independent of $\e>0$, such that 
\begin{equation}\label{contradiction}
\|v_{\e}\|_{\e}^{2}\geq \alpha, \quad \mbox{ for all } \e>0.
\end{equation}
Let $(\e_{n})\subset (0, \infty)$ be such that $\e_{n}\ri 0$.
Assume, by contradiction, that there exists $r>0$ such that
$$
\lim_{n\ri \infty}\sup_{y\in \R^{N}} \int_{B_{r}(y)} v_{\e_{n}}^{2}(x, 0)\, dx=0.
$$
By Lemma \ref{Lions}, we know that $v_{\e_{n}}(\cdot, 0)\ri 0$ in $L^{r}(\R^{N})$ for all $r\in (2, \2)$. Hence, \eqref{AM2.8} and the growth assumptions on $f$ yield
\begin{align*}
\int_{\mathbb{R}^{N}} F(v_{\e_n}(x, 0)) \,dx=\int_{\mathbb{R}^{N}} f(v_{\e_{n}}(x, 0))v_{\e_{n}}(x, 0) \,dx=o_{n}(1).
\end{align*}
This implies that 
\begin{align}\label{2.11HZAMPA}
\int_{\mathbb{R}^{N}}  G_{\e_{n}}(x, v_{\e_{n}}(x, 0)) \,dx\leq \frac{1}{2^{*}_{s}} \int_{\Lambda_{\e_{n}}\cup \{v_{\e_{n}(\cdot, 0)}\leq a\}} v^{\2}_{\e_{n}}(x, 0) \,dx+\frac{V_{1}}{2\kappa} \int_{\Lambda^{c}_{\e_{n}}\cap \{v_{\e_{n}}(\cdot, 0)> a\}} v_{\e_{n}}^{2}(x, 0) \,dx+o_{n}(1)
\end{align}
and
\begin{align}\label{2.12HZAMPA}
\int_{\mathbb{R}^{N}}  g_{\e_{n}}(x, v_{\e_{n}}(x, 0))v_{\e_n}(x, 0) \,dx= \int_{\Lambda_{\e_{n}}\cup \{v_{\e_{n}}(\cdot, 0)\leq a\}} v^{\2}_{\e_{n}}(x, 0) \,dx+\frac{V_{1}}{\kappa} \int_{\Lambda^{c}_{\e_{n}}\cap \{v_{\e_{n}}(\cdot, 0)> a\}} v_{\e_{n}}^{2}(x, 0) \,dx+o_{n}(1).
\end{align}
Because of $\langle J'_{\e_{n}}(v_{\e_{n}}), v_{\e_{n}}\rangle=0$ and \eqref{2.12HZAMPA}, we obtain 
\begin{align}\label{2.13HZAMPA}
\|v_{\e_{n}}\|^{2}_{\e_{n}}-\frac{V_{1}}{\kappa} \int_{\Lambda^{c}_{\e_{n}}\cap \{v_{\e_{n}}(\cdot, 0)> a\}} v_{\e_{n}}^{2}(x, 0) \,dx=\int_{\Lambda_{\e_{n}}\cup \{v_{\e_{n}}(\cdot, 0)\leq a\}} v^{\2}_{\e_{n}}(x, 0) \,dx+o_{n}(1).
\end{align}
Let $\ell\geq 0$ be such that 
$$
\|v_{\e_{n}}\|^{2}_{\e_{n}}-\frac{V_{1}}{\kappa} \int_{\Lambda^{c}_{\e_{n}}\cap \{v_{\e_{n}}(\cdot, 0)> a\}} v_{\e_{n}}^{2}(x, 0) \,dx\rightarrow \ell.
$$
It is clear that $\ell>0$; otherwise, $\|v_{\e_{n}}\|_{\e_{n}}\rightarrow 0$, and this is impossible due to \eqref{contradiction} (alternatively, one can observe that $\|v_{\e_{n}}\|_{\e_{n}}\rightarrow 0$ yields $c_{\e_{n}}=J_{\e_{n}}(v_{\e_{n}})\ri 0$, which is a contradiction since Remark \ref{REMARKTERMOSIFONE} ensures that $c_{\e_{n}}\geq d_{-V_{1}}>0$ for all $n\in \mathbb{N}$).
From \eqref{2.13HZAMPA}, we derive that 
$$
\int_{\Lambda_{\e_{n}}\cup \{v_{\e_{n}}(\cdot, 0)\leq a\}} v^{\2}_{\e_{n}}(x, 0) dx\rightarrow \ell.
$$
Using $J_{\e_{n}}(v_{\e_{n}})-\frac{1}{ 2^{*}_{s}}\langle J'_{\e_{n}}(v_{\e_{n}}), v_{\e_{n}}\rangle=c_{\e_{n}}$, \eqref{2.11HZAMPA}, and \eqref{2.12HZAMPA}, we arrive at
\begin{equation}\label{MOLICA1AMPA}
\frac{s}{N} \ell\leq \liminf_{n\rightarrow \infty} c_{\e_{n}}.
\end{equation}
On the other hand, noting that 
\begin{align*}
\int_{\R^{N}} V_{\e_{n}}(x)v_{\e_{n}}^{2}(x, 0) \,dx-\frac{V_{1}}{\kappa} \int_{\Lambda^{c}_{\e_{n}}\cap \{v_{\e_{n}}(\cdot, 0)> a\}} v_{\e_{n}}^{2}(x, 0) \,dx&\geq -\left(1+\frac{1}{\kappa}\right)V_{1}\int_{\R^{N}} v^{2}_{\e_{n}}(x, 0)\, dx\\
&\geq -\left(1+\frac{1}{\kappa}\right)\frac{V_{1}}{m^{2s}}\| v_{\e_{n}} \|^{2}_{\x},
\end{align*}
by the definitions of $S_{*}$ and $\zeta$, we see that
$$
\|v_{\e_{n}}\|^{2}_{\e_{n}}-\frac{V_{1}}{\kappa} \int_{\Lambda^{c}_{\e_{n}}\cap \{v_{\e_{n}}(\cdot, 0)> a\}} v_{\e_{n}}^{2}(x, 0) \,dx\geq \zeta  S_{*} \left(\int_{\Lambda_{\e_{n}}\cup \{v_{\e_{n}}(\cdot, 0)\leq a\}} v^{\2}_{\e_{n}}(x, 0) \,dx  \right)^{\frac{2}{ 2^{*}_{s}}},
$$
and letting $n\rightarrow \infty$, we infer that 
\begin{equation}\label{MOLICA2AMPA}
\ell\geq \zeta S_{*}\ell^{\frac{2}{ 2^{*}_{s}}}.
\end{equation}
Combining \eqref{MOLICA1AMPA} and \eqref{MOLICA2AMPA} with the fact that $\ell>0$, we obtain 
\begin{align*}
\liminf_{n\rightarrow \infty} c_{\e_{n}}\geq \frac{s}{N} (\zeta S_{*})^{\frac{N}{2s}},
\end{align*}
which contradicts Lemma \ref{lem2.3AM}.
\end{proof}

\begin{lem}\label{lem2.5AM}
For each sequence $(\e_{n})$ such that $\e_{n}\ri 0$, consider the sequence $(y_{\e_{n}})\subset \R^{N}$ given in Lemma \ref{lem2.4AM}. Set $w_{n}(x, y):=v_{\e_{n}}(x+y_{\e_{n}}, y)$. Then, there exist a subsequence of $(w_{n})$, still denoted by itself, and $w\in \x\setminus \{0\}$ such that
\begin{align*}
w_{n}\ri w  \quad \mbox{ in } \x.
\end{align*}
Moreover, there exists $x_{0}\in \Lambda$ such that 
\begin{align*}
\e_{n}y_{\e_{n}}\ri x_{0} \quad \mbox{ and } \quad V(x_{0})=-V_{0}.
\end{align*}
\end{lem}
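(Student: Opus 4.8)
The plan is to run the del Pino--Felmer concentration argument on the translated sequence $(w_n)$, adapting it to the extended problem and to the presence of the critical term. First I would record that $(v_{\e_n})$ is bounded in $X_{\e_n}$: by Lemma~\ref{lem2.3AM} one has $\limsup_n c_{\e_n}\le d_{V(0)}<\infty$, so arguing as in Lemma~\ref{boundPS} (using $(g_3)$, \eqref{m-ineq}, \eqref{equivalent} and $J'_{\e_n}(v_{\e_n})=0$, see \eqref{AM2.8}) gives the bound; since $\|\cdot\|_{\x}$ is invariant under horizontal translations and $X_{\e_n}$ embeds continuously in $\x$ by \eqref{equivalent}, the sequence $(w_n)$ is bounded in $\x$ and, up to a subsequence, $w_n\rightharpoonup w$ in $\x$. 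That $w\not\equiv 0$ is immediate from Lemma~\ref{lem2.4AM} and the compact embedding of Theorem~\ref{Sembedding}, because
$$
\int_{B_r}w^2(x,0)\,dx=\lim_{n\ri\infty}\int_{B_r}w_n^2(x,0)\,dx=\lim_{n\ri\infty}\int_{B_r(y_{\e_n})}v_{\e_n}^2(x,0)\,dx\geq\beta>0 .
$$

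The next, and delicate, point is to locate the concentration. Setting $\tilde y_n:=\e_n y_n$, I would first show $(\tilde y_n)$ is bounded: if not, along a subsequence $\dist(\tilde y_n,\Lambda)\ri\infty$, so choosing $\rho_n\ri\infty$ with $2\rho_n\e_n<\dist(\tilde y_n,\Lambda)$ and a cut-off $\psi_n$ equal to $1$ on $B^{+}_{\rho_n}$ and vanishing outside $B^{+}_{2\rho_n}$, on $\supp\psi_n$ the point $\e_n(x+y_n)$ lies outside $\Lambda$, so there $w_n$ feels the penalized nonlinearity $\tilde f$, with $t\tilde f(t)\le\frac{V_1}{\kappa}t^2$. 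Testing the (translated) equation for $w_n$ against $w_n\psi_n^2$, using $V\ge-V_1$, Cauchy--Schwarz for the gradient cross-terms (which are $o_n(1)$ since $\rho_n\ri\infty$ and $(w_n)$ is bounded in $\x$) and \eqref{m-ineq} applied to $w_n\psi_n$, I would get $\big(m^{2s}-V_1(1+\tfrac1\kappa)\big)\int_{\R^N}(w_n\psi_n)^2(x,0)\,dx=o_n(1)$; since $\kappa>\frac{V_1}{m^{2s}-V_1}$ the prefactor is positive, so $w_n(\cdot,0)\ri 0$ in $L^2_{loc}(\R^N)$, contradicting $w\not\equiv 0$. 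Hence $\tilde y_n\ri x_0$ along a subsequence, and the same estimate with a \emph{fixed} cut-off $\psi_R$ (the cross-terms now being $O(1/R)$) shows, after letting $n\ri\infty$ and then $R\ri\infty$, that $x_0$ cannot lie in $\R^N\setminus\overline{\Lambda}$. Thus $x_0\in\overline{\Lambda}$, so $V(x_0)\ge-V_0$ and $d_{V(x_0)}\ge d_{-V_0}=d_{V(0)}$ since $\mu\mapsto d_\mu$ is nondecreasing.

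With $\tilde y_n\ri x_0\in\overline{\Lambda}$, I would pass to the limit in the weak formulation of the equation for $w_n$ — weak convergence for the quadratic part, local $L^r$-convergence for the subcritical part, and a.e.-convergence together with the $L^{\2/(\2-1)}$-bound for the critical part — and, since near any point $g(\e_n(x+y_n),\cdot)$ eventually coincides with $f(\cdot)+((\cdot)^{+})^{\2-1}$, deduce that $w$ is a nontrivial weak solution of \eqref{AEP} with $\mu=V(x_0)$, so $w\in\mathcal{M}_{V(x_0)}$. Then, writing $c_{\e_n}=J_{\e_n}(v_{\e_n})-\tfrac12\langle J'_{\e_n}(v_{\e_n}),v_{\e_n}\rangle=\int_{\R^N}\big[\tfrac12 g(\e_n(x+y_n),w_n(x,0))w_n(x,0)-G(\e_n(x+y_n),w_n(x,0))\big]\,dx$, using $(g_3)$ (the integrand is nonnegative) and Fatou's lemma on the region where $\e_n(x+y_n)\in\Lambda$, one obtains
$$
d_{V(x_0)}\le L_{V(x_0)}(w)=L_{V(x_0)}(w)-\tfrac12\langle L'_{V(x_0)}(w),w\rangle\le\liminf_n c_{\e_n}\le\limsup_n c_{\e_n}\le d_{V(0)}\le d_{V(x_0)} .
$$
Hence $d_{V(x_0)}=d_{-V_0}$; since $\mu\mapsto d_\mu$ is in fact strictly increasing (compare ground-state energies of $L_\mu$), $V(x_0)=-V_0$, and then $(V_2)$, which gives $\min_{\partial\Lambda}V>-V_0$, forces $x_0\in\Lambda$. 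Finally $c_{\e_n}\ri d_{V(x_0)}=L_{V(x_0)}(w)<\frac{s}{N}(\zeta S_*)^{\frac{N}{2s}}$, and running the concentration--compactness argument of Proposition~\ref{CCL} on $(w_n^{+})$ exactly as in Lemma~\ref{lemma2} rules out atoms (an atom would push the level to at least $\frac{s}{N}S_*^{\frac{N}{2s}}$); so $w_n(\cdot,0)\ri w(\cdot,0)$ in $L^{\2}_{loc}(\R^N)$, and together with the Brezis--Lieb lemma and the equation this upgrades to $w_n\ri w$ strongly in $\x$.

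The step I expect to be the main obstacle is the pair of mechanisms absent from the subcritical analysis of \cite{ADCDS}: ruling out loss of mass (to infinity or to $\R^N\setminus\overline{\Lambda}$) and recovering compactness against the critical exponent. The first works precisely because of the penalization — outside $\Lambda$ one has $\tilde f(t)/t\le V_1/\kappa<m^{2s}-V_1$, so the ``linear'' part dominates and no nontrivial limit profile can survive there — combined with the careful use of \eqref{m-ineq}, as in Lemma~\ref{lemTIGHT}, to absorb the cut-off errors while keeping the inequality sharp. The second is handled by the strict bound $c_{\e_n}<\frac{s}{N}(\zeta S_*)^{\frac{N}{2s}}$ from Lemma~\ref{lem2.3AM}, which is exactly what prevents bubbling in Proposition~\ref{CCL}, just as in Lemma~\ref{lemma2}.
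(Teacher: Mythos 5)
Your opening steps — boundedness of $(w_n)$ in $\x$, $w\not\equiv 0$, and the exclusion of escape to infinity and of $x_0\in\R^N\setminus\overline{\Lambda}$ via cut-offs and the penalized bound $\tilde f(t)t\le\tfrac{V_1}{\kappa}t^2$ together with \eqref{m-ineq} — are correct and in the same spirit as the paper, which obtains $\dist(\e_n y_{\e_n},\overline{\Lambda})\to 0$ by testing the translated equation against a nonnegative compactly supported approximation $\psi_j$ of $w$.

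There is, however, a circularity in the passage from $x_0\in\overline{\Lambda}$ to $V(x_0)=-V_0$. You propose to pass to the limit in the \emph{exact} weak formulation and conclude that $w$ solves \eqref{AEP} with $\mu=V(x_0)$, on the basis that ``near any point $g(\e_n(x+y_n),\cdot)$ eventually coincides with $f(\cdot)+((\cdot)^{+})^{\2-1}$.'' That is only true if $x_0$ is an \emph{interior} point of $\Lambda$: if $x_0\in\partial\Lambda$, the rescaled sets $\tilde{\Lambda}_n=(\Lambda-\e_n y_n)/\e_n$ converge to a half-space, and on the complementary half-space the penalized nonlinearity $\tilde f$ persists for all $n$, so the limit equation is not \eqref{AEP} and $w\in\mathcal{M}_{V(x_0)}$ does not follow. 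But ruling out $x_0\in\partial\Lambda$ is exactly what the chain $d_{V(x_0)}\le d_{V(0)}$ is supposed to deliver. The paper breaks this circle by never asserting that $w$ solves the limit equation: it uses only the everywhere-valid one-sided bound $(g_2)$, $g_{\e}(x,t)\le f(t)+t^{\2-1}$, so that testing against $\psi_j$ and letting $n\to\infty$, then $j\to\infty$, gives $\langle L'_{V(x_0)}(w),w\rangle\le 0$ unconditionally; hence there is $t_1\in(0,1]$ with $t_1w\in\mathcal{M}_{V(x_0)}$, and the comparison $d_{V(x_0)}\le L_{V(x_0)}(t_1 w)\le\liminf_n c_{\e_n}\le d_{V(0)}$ goes through whether or not $x_0$ lies on $\partial\Lambda$.

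A second concern is the strong convergence. You apply Proposition~\ref{CCL} to $(w_n^+)$, but that proposition requires the sequence to be tight, which you do not verify for the translated sequence: Lemma~\ref{lemTIGHT} gives tightness of a Palais--Smale sequence at a \emph{fixed} $\e$, centered at the origin, whereas here $\e_n\to 0$ and $|y_{\e_n}|\to\infty$, so neither its hypotheses nor its conclusion transfer to $(w_n)$. Without tightness, concentration--compactness plus Brezis--Lieb does not exclude escape of $L^{\2}$-mass to infinity, which is precisely the obstruction to strong convergence in $\x$. The paper handles this step by an entirely different route: it decomposes $J_{\e_n}(v_n)-\tfrac1\theta\langle J'_{\e_n}(v_n),v_n\rangle$ into a weakly lower-semicontinuous quadratic part plus the auxiliary integrands $h_n^1,h_n^2,h_n^3$, applies Fatou's lemma, and squeezes against $d_{V(0)}$ to force simultaneous convergence of $\|w_n\|^2_{\x}-V_1|w_n(\cdot,0)|^2_2$ and of $|w_n(\cdot,0)|^2_2$, with no concentration--compactness at all. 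Either a tightness argument for $(w_n)$, or a switch to the paper's energy-splitting method, is needed to close the proof.
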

\begin{proof}
Hereafter, we denote by $(y_{n})$ and $(v_{n})$, the sequences $(y_{\e_n})$ and $(v_{\e_n})$, respectively. Exploiting \eqref{AM2.8}, Lemma \ref{lem2.3AM} and \eqref{equivalent}, we can argue as in the proof of Lemma \ref{boundPS} to deduce that $(w_{n})$ is bounded in $\x$. Hence, up to a subsequence, there exists $w\in \x\setminus \{0\}$ such that, as $n\rightarrow \infty$,  
\begin{equation}\label{2.15AM}
\left\{
\begin{array}{ll}
w_{n}\rightharpoonup w &\mbox{ in } X^{s}(\mathbb{R}^{N+1}_{+}), \\
w_{n}(\cdot, 0)\rightarrow w(\cdot, 0) &\mbox{ in } L^{r}_{loc}(\mathbb{R}^{N}), \,\, \mbox{ for all } r\in [1, 2^{*}_{s}), \\
w_{n}(\cdot, 0)\rightarrow w(\cdot, 0) &\mbox{ a.e. in } \mathbb{R}^{N},
\end{array}
\right.
\end{equation}
and
\begin{align}\label{2.16AM}
\int_{B_{r}} w^{2}(x, 0)\, dx\geq \beta>0,
\end{align}
where we have used Lemma \ref{lem2.4AM}.
Next, we will show that $(\e_{n}y_{n})$ is bounded in $\R^N$. To this end, it suffices to prove that
\begin{align}\label{Claim1}
{\rm dist}(\e_{n}y_{n}, \overline{\Lambda})\ri 0 \quad \mbox{ as } n\ri \infty.
\end{align}
In fact, if \eqref{Claim1} does not hold, there exist $\delta>0$ and a subsequence of $(\e_{n}y_{n})$, still denoted by itself, such that 
$$
{\rm dist}(\e_{n}y_{n}, \overline{\Lambda})\geq \delta, \quad  \mbox{ for all } n\in \mathbb{N}.
$$
Consequently, we can find $R>0$ such that $B_{R}(\e_{n}y_{n})\subset \Lambda^{c}$ for all $n\in \mathbb{N}$. Since $w\geq 0$, it follows from the definition of $\x$  that there exists a nonnegative sequence $(\psi_{j})\subset \x$ such that $\psi_{j}$ has compact support in $\overline{\mathbb{R}^{N+1}_{+}}$ and $\psi_{j}\ri w$ in $\x$ as $j\ri \infty$. Fix $j\in \mathbb{N}$. 
Inserting $\psi_{j}$ into the relation $\langle J'_{\e_{n}}(v_{n}), \phi\rangle=0$ for all $\phi\in X_{\e_{n}}$, we can write 
\begin{align}\label{2.17AM}
&\iint_{\R^{N+1}_{+}} y^{1-2s} (\nabla w_{n}\cdot\nabla \psi_{j}+m^{2}w_{n}\psi_{j})\, dx dy+\int_{\R^{N}} V_{\e_{n}}(x+y_{n}) w_{n}(x, 0) \psi_{j}(x, 0)\, dx  \nonumber\\
&=\int_{\R^{N}} g_{\e_{n}}(x+y_{n}, w_{n}(x, 0))\psi_{j}(x, 0)\, dx.
\end{align}
Note that, by the properties of $g_{\e}$, 
\begin{align*}
&\int_{\R^{N}} g_{\e_{n}}(x+y_{n}, w_{n}(x, 0))\psi_{j}(x, 0)\, dx\\
&= \int_{B_{\frac{R}{\e_{n}}}} g_{\e_{n}}(x+y_{n}, w_{n}(x, 0))\psi_{j}(x, 0)\, dx +\int_{B^{c}_{\frac{R}{\e_{n}}}} g_{\e_{n}}(x+y_{n}, w_{n}(x, 0))\psi_{j}(x, 0)\, dx \\
&\leq \frac{V_{1}}{\kappa} \int_{B_{\frac{R}{\e_{n}}}} w_{n}(x, 0) \psi_{j}(x, 0)\, dx+\int_{B^{c}_{\frac{R}{\e_{n}}}} [f(w_{n}(x, 0))+w^{\2-1}_{n}(x, 0)]\psi_{j}(x, 0)\, dx,
\end{align*}
which combined with $(V_1)$ and \eqref{2.17AM} yields
\begin{align*}
&\iint_{\R^{N+1}_{+}} y^{1-2s} (\nabla w_{n}\cdot\nabla \psi_{j}+m^{2}w_{n}\psi_{j})\, dx dy-V_{1}\left(1+\frac{1}{\kappa}\right) \int_{\R^{N}} w_{n}(x, 0) \psi_{j}(x, 0)\, dx  \\
&\leq \int_{B_{\frac{R}{\e_{n}}}^{c}} [f(w_{n}(x, 0))+w^{\2-1}_{n}(x, 0)]\psi_{j}(x, 0)\, dx.
\end{align*}
Recalling that $\psi_{j}$ has compact support and exploiting $\e_{n}\ri 0$, \eqref{2.15AM}, Theorem \ref{Sembedding}, the growth assumptions on $f$, we have that, as $n\ri \infty$,
\begin{align*}
\int_{B_{\frac{R}{\e_{n}}}^{c}} [f(w_{n}(x, 0))+w^{\2-1}_{n}(x, 0)]\psi_{j}(x, 0)\, dx\ri 0
\end{align*} 
and
\begin{align*}
&\iint_{\R^{N+1}_{+}} y^{1-2s} (\nabla w_{n}\cdot\nabla \psi_{j}+m^{2}w_{n}\psi_{j})\, dx dy-V_{1}\left(1+\frac{1}{\kappa}\right) \int_{\R^{N}} w_{n}(x, 0) \psi_{j}(x, 0)\, dx   \\
&\ri \iint_{\R^{N+1}_{+}} y^{1-2s} (\nabla w\cdot\nabla \psi_{j}+m^{2}w\psi_{j})\, dx dy-V_{1}\left(1+\frac{1}{\kappa}\right) \int_{\R^{N}} w(x, 0) \psi_{j}(x, 0)\, dx.
\end{align*}
Hence, for all $j\in \mathbb{N}$,
\begin{align*}
\iint_{\R^{N+1}_{+}} y^{1-2s} (\nabla w\cdot\nabla \psi_{j}+m^{2}w\psi_{j})\, dx dy-V_{1}\left(1+\frac{1}{\kappa}\right) \int_{\R^{N}} w(x, 0) \psi_{j}(x, 0)\, dx\leq 0,
\end{align*}
and letting $j\ri \infty$, we obtain
\begin{align*}
\|w\|^{2}_{\x}-V_{1}\left(1+\frac{1}{\kappa}\right) |w(\cdot, 0)|^{2}_{2} dx\leq 0.
\end{align*}
Using \eqref{m-ineq} and $\kappa>\frac{V_{1}}{m^{2s}-V_{1}}$, we arrive at 
$$
0\leq \left(1-\frac{V_{1}}{m^{2s}} \left(1+\frac{1}{\kappa}\right) \right)\|w\|^{2}_{\x}\leq 0,
$$
which contradicts \eqref{2.16AM}. By virtue of \eqref{Claim1}, there exist a subsequence of $(\e_{n}y_{n})$, still denoted by itself, and $x_{0}\in \overline{\Lambda}$ such that $\e_{n}y_{n}\ri x_{0}$ as $n\ri \infty$. Next, we claim that $x_{0}\in \Lambda$.

By $(g_2)$ and \eqref{2.17AM}, we have that
\begin{align*}
&\iint_{\R^{N+1}_{+}} y^{1-2s} (\nabla w_{n}\cdot\nabla \psi_{j}+m^{2}w_{n}\psi_{j})\, dx dy+\int_{\R^{N}} V_{\e_{n}}(x+y_{n}) w_{n}(x, 0) \psi_{j}(x, 0)\, dx \\
&\leq \int_{\R^{N}} [f(w_{n}(x, 0))+w^{\2-1}_{n}(x, 0)] \psi_{j}(x, 0)\, dx,
\end{align*}
and taking the limit as $n\ri \infty$, it follows from \eqref{2.15AM}, Theorem \ref{Sembedding} and the continuity of $V$ that
\begin{align*}
&\iint_{\R^{N+1}_{+}} y^{1-2s} (\nabla w\cdot\nabla \psi_{j}+m^{2}w\psi_{j})\, dx dy+\int_{\R^{N}} V(x_{0}) w(x, 0) \psi_{j}(x, 0)\, dx \\
&\leq \int_{\R^{N}} [f(w(x, 0))+w^{\2-1}(x, 0)] \psi_{j}(x, 0)\, dx.
\end{align*}
Letting $j\ri \infty$, we find 
\begin{align*}
\iint_{\R^{N+1}_{+}} y^{1-2s} (|\nabla w|^{2}+m^{2}w^{2})\, dx dy+\int_{\R^{N}} V(x_{0}) w^{2}(x, 0)\, dx \leq \int_{\R^{N}} [f(w(x, 0)) w(x, 0)+w^{\2}(x, 0)]\, dx.
\end{align*}
Hence, there is $t_{1}\in (0, 1)$ such that $t_{1}w\in \mathcal{M}_{V(x_{0})}$. Thus, by Lemma \ref{lem2.3AM}, we see that
\begin{align*}
d_{V(x_{0})}\leq L_{V(x_{0})}(t_{1}w)\leq \liminf_{n\ri \infty} J_{\e_{n}}(v_{n})=\liminf_{n\ri \infty} c_{\e_{n}}\leq d_{V(0)}.
\end{align*}
Therefore, $d_{V(x_{0})}\leq d_{V(0)}$, which implies that $V(x_{0})\leq V(0)=-V_{0}$.
Since $-V_{0}=\inf_{x\in \overline{\Lambda}} V(x)$, we deduce that $V(x_{0})=-V_{0}$. Moreover, by $(V_2)$, $x_{0}\notin \partial \Lambda$, and we can infer that $x_{0}\in \Lambda$.

Now, we aim to show that $w_{n}\ri w$ in $\x$ as $n\ri \infty$. For this purpose, for all $n\in \mathbb{N}$ and $x\in \R^{N}$, we 
set
$$
\tilde{\Lambda}_{n} := \frac{\Lambda - \e_{n}\tilde{y}_{n}}{\e_{n}}
$$ 
and
\begin{align*}
&\tilde{\chi}_{n}^{1}(x):= \left\{
\begin{array}{ll}
1, \, &\mbox{ if } x\in \tilde{\Lambda}_{n},\\
0, \, &\mbox{ if } x\in \tilde{\Lambda}^{c}_{n}, 
\end{array}
\right.\\
&\tilde{\chi}_{n}^{2}(x):= 1- \tilde{\chi}_{n}^{1}(x).
\end{align*}
Let us define the following functions for $x\in \R^{N}$ and $n\in \mathbb{N}$:
\begin{align*}
&h_{n}^{1}(x):= \left(\frac{1}{2}-\frac{1}{\theta}\right) (V_{\e_{n}}(x+ y_{n})+V_{1}) w^{2}_{n}(x, 0) \tilde{\chi}_{n}^{1}(x),\\
&h^{1}(x):=\left(\frac{1}{2}-\frac{1}{\theta}\right) (V(x_{0})+V_{1}) w^{2}(x, 0), \\
&h_{n}^{2}(x)\!\!:=\!\!\left[ \left(\frac{1}{2}-\frac{1}{\theta}\right) (V_{\e_{n}}(x+y_{n})+V_{1}) w^{2}_{n}(x, 0) + \frac{1}{\theta} g_{\e_{n}}(x+y_{n}, w_{n}(x, 0)) w_{n}(x, 0) - G_{\e_{n}}(x+y_{n}, w_{n}(x, 0))\right] \tilde{\chi}_{n}^{2}(x), \\
&h_{n}^{3}(x):= \left(\frac{1}{\theta} g_{\e_{n}}(x+y_{n}, w_{n}(x, 0)) w_{n}(x, 0) - G_{\e_{n}}(x+y_{n}, w_{n}(x, 0))\right) \tilde{\chi}_{n}^{1}(x) \\
&\quad \quad \quad \! \! \!=\left[\frac{1}{\theta} \left(\left(f(w_{n}(x, 0))w_{n}(x, 0)+(w_{n}(x, 0))^{\2}\right)- \left(F(w_{n}(x, 0))+\frac{1}{\2}(w_{n}(x, 0))^{\2}\right)\right) \right] \tilde{\chi}_{n}^{1}(x),  \\
&h^{3}(x):= \frac{1}{\theta} \left(f(w(x, 0))w(x, 0)+(w(x, 0))^{\2}\right)- \left(F(w(x, 0))+\frac{1}{\2}(w(x, 0))^{\2}\right). 
\end{align*}
In view of $(f_3)$, $(g_3)$, $(V_1)$, and our choice of $\kappa$, the aforementioned functions are nonnegative in $\R^{N}$.
Furthermore, using the following relations of limits, as $n\ri \infty$, 
\begin{align*}
&w_{n}(x, 0) \ri w(x, 0)\quad \mbox{ for a.e. } x\in \R^{N}, \\
&\e_{n}y_{n}\ri x_{0}\in \Lambda,
\end{align*}
we deduce that, as $n\ri \infty$,
\begin{align*}
&\tilde{\chi}_{n}^{1}(x)\ri 1, \, h_{n}^{1}(x)\ri h^{1}(x), \, h_{n}^{2}(x)\ri 0, \, \mbox{ and } \, h_{n}^{3}(x)\ri h^{3}(x) \, \mbox{ for a.e. } x\in \R^{N}. 
\end{align*}
Then, by a direct computation,   
\begin{align*}
d_{V(0)} &\geq \limsup_{n\ri \infty} c_{\e_{n}} = \limsup_{n\ri \infty} \left( J_{\e_{n}}(v_{n}) - \frac{1}{\theta} \langle J'_{\e_{n}}(v_{n}), v_{n}\rangle \right)\\
&\geq \limsup_{n\ri \infty} \left\{\left(\frac{1}{2}-\frac{1}{\theta} \right)\left[\|w_{n}\|^{2}_{\x}-V_{1}|w_{n}(\cdot, 0)|^{2}_{2}\right]+ \int_{\R^{N}} (h_{n}^{1}+ h_{n}^{2}+ h_{n}^{3}) \, dx\right\}\\
&\geq \liminf_{n\ri \infty} \left\{\left(\frac{1}{2}-\frac{1}{\theta} \right)\left[\|w_{n}\|^{2}_{\x}-V_{1}|w_{n}(\cdot, 0)|^{2}_{2}\right]+ \int_{\R^{N}} (h_{n}^{1}+ h_{n}^{2}+ h_{n}^{3}) \, dx\right\} \\
&\geq \left(\frac{1}{2}-\frac{1}{\theta} \right)\left[\|w\|^{2}_{\x}-V_{1}|w(\cdot, 0)|^{2}_{2}\right]+ \int_{\R^{N}} (h^{1}+ h^{3}) \, dx
= d_{V(0)}.
\end{align*}
The aforementioned inequalities yield
\begin{align}\label{2.19AM}
\lim_{n\ri \infty}\|w_{n}\|^{2}_{\x}-V_{1}|w_{n}(\cdot, 0)|^{2}_{2}=\|w\|^{2}_{\x}-V_{1}|w(\cdot, 0)|^{2}_{2}
\end{align}
and 
\begin{align*}
h_{n}^{1}\ri h^{1}, \, h_{n}^{2}\ri 0 \, \mbox{ and }\, h_{n}^{3}\ri h^{3} \, \mbox{ in } \, L^{1}(\R^{N}). 
\end{align*}
Hence,
\begin{align*}
\lim_{n\ri \infty} \int_{\R^{N}} (V_{\e_{n}}(x+y_{n})+V_{1}) w^{2}_{n}(x, 0) \, dx = \int_{\R^{N}} (V(x_{0})+V_{1}) w^{2}(x, 0) \, dx, 
\end{align*}
and we obtain
\begin{align}\label{2.20AM}
\lim_{n\ri \infty} |w_{n}(\cdot, 0)|_{2}^{2}= |w(\cdot, 0)|_{2}^{2}. 
\end{align}
Combining \eqref{2.19AM} and \eqref{2.20AM}, and recalling that $\x$ is a Hilbert space, we conclude that
\begin{align*}
\|w_{n}-w\|_{\x}\ri 0 \quad \mbox{ as } n\ri \infty.
\end{align*}
\end{proof}

\noindent
Now we use a Moser iteration argument \cite{Moser} to establish a fundamental $L^{\infty}$-estimate.
\begin{lem}\label{moser}
Let $(w_{n})$ be the sequence defined as in Lemma \ref{lem2.5AM}. Then, $(w_{n}(\cdot, 0))\subset L^{\infty}(\R^{N})$, and there exists $C>0$ such that
\begin{align*}
|w_{n}(\cdot, 0)|_{\infty}\leq C, \quad \mbox{ for all } n\in \mathbb{N}.
\end{align*}
\end{lem}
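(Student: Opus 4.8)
The plan is to run a Moser iteration on the boundary traces $u_n:=w_n(\cdot,0)\ge 0$, being careful that every constant stays independent of $n$; the only genuine obstacle, due to the critical exponent, is that the coefficient of the top‑order nonlinearity lies a priori only in the borderline space $L^{N/(2s)}(\R^N)$, and I would get around it by a single preliminary step that gains a little integrability. Since $w_n(x,y)=v_{\e_n}(x+y_{\e_n},y)$ and $v_{\e_n}$ solves \eqref{MEP}, $w_n$ is a weak solution of
\[
\left\{
\begin{array}{ll}
-\dive(y^{1-2s}\nabla w_n)+m^2 y^{1-2s}w_n=0 &\mbox{ in }\R^{N+1}_{+},\\
\frac{\partial w_n}{\partial\nu^{1-2s}}=-V_{\e_n}(x+y_{\e_n})u_n+g_{\e_n}(x+y_{\e_n},u_n) &\mbox{ on }\R^N.
\end{array}
\right.
\]
By Lemma \ref{lem2.5AM}, $w_n\ri w$ in $\x$; hence $(\|w_n\|_{\x})$ is bounded and, by \eqref{traceineq} and Theorem \ref{Sembedding}, $u_n\ri w(\cdot,0)$ strongly in $L^r(\R^N)$ for every $r\in[2,\2]$, so that $\{|u_n|^{\2}\}_n$ is bounded in $L^1(\R^N)$ and uniformly integrable. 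Moreover, by \eqref{growthg} and $(V_1)$, for every $\eta>0$ there is $C_\eta>0$ with $[-V_{\e_n}(x+y_{\e_n})u_n+g_{\e_n}(x+y_{\e_n},u_n)]\,u_n\le (V_1+\eta)u_n^2+C_\eta u_n^{\2}$ a.e. in $\R^N$.

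Fix $\beta\ge 1$ and $L>0$, and set $\gamma_L(t):=\min\{t,L\}^{\beta-1}$. I would test the weak formulation with $\phi_{n,L}:=w_n\,\gamma_L(w_n)^2=w_n\min\{w_n,L\}^{2(\beta-1)}$, which is admissible in $\x$ since it is a Lipschitz function of $w_n$ vanishing at $0$; its trace is $u_n\min\{u_n,L\}^{2(\beta-1)}$. Writing $\psi_{n,L}:=w_n\gamma_L(w_n)$ and $z_{n,L}:=\psi_{n,L}(\cdot,0)=u_n\min\{u_n,L\}^{\beta-1}$, and using $\gamma_L'\ge 0$ together with the elementary inequality $(\gamma_L(t)+t\gamma_L'(t))^2\le\beta^2\big(\gamma_L(t)^2+2t\gamma_L(t)\gamma_L'(t)\big)$, the quadratic form is bounded below by $\beta^{-2}\|\psi_{n,L}\|_{\x}^2$. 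Combining this with \eqref{BRANDLE} applied to $\psi_{n,L}$ (note $\x\hookrightarrow X^s_0(\R^{N+1}_{+})$) and with the bound on the Neumann datum, I obtain the basic inequality
\[
\frac{S_*}{\beta^2}\left(\int_{\R^N}z_{n,L}^{\2}\,dx\right)^{\frac{2}{\2}}\le (V_1+\eta)\int_{\R^N}z_{n,L}^2\,dx+C_\eta\int_{\R^N}u_n^{\2-2}z_{n,L}^2\,dx.
\]
Since $z_{n,L}\le u_n^{\beta}$, the first term on the right is harmless whenever $2\le 2\beta\le r$ and $(u_n)$ is bounded in $L^r(\R^N)$; the delicate term is $\int u_n^{\2-2}z_{n,L}^2$, because $u_n^{\2-2}$ only belongs to $L^{N/(2s)}(\R^N)$.

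\emph{First boost.} I would take $\beta=\2/2$ (so $2\beta=\2$), split $\R^N=\{u_n\le K\}\cup\{u_n>K\}$, and apply H\"older on $\{u_n>K\}$ with exponents $N/(2s)$ and $\2/2$ to get $\int u_n^{\2-2}z_{n,L}^2\le K^{\2-2}\int z_{n,L}^2+\big(\int_{\{u_n>K\}}u_n^{\2}\big)^{2s/N}\big(\int z_{n,L}^{\2}\big)^{2/\2}$. By uniform integrability of $\{u_n^{\2}\}_n$, fix $K$ so large that $C_\eta\beta^2\big(\sup_n\int_{\{u_n>K\}}u_n^{\2}\big)^{2s/N}\le S_*/2$; inserting this into the basic inequality, absorbing the critical term, and letting $L\ri\infty$ (by Fatou), I conclude that $(u_n)$ is bounded in $L^{r_0}(\R^N)$ with $r_0:=\frac{(\2)^2}{2}>\2$. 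Hence $(u_n^{\2-2})$ is bounded in $L^{p_0}(\R^N)$ with $p_0:=\frac{r_0}{\2-2}>\frac{N}{2s}$ — the strict inequality because $\frac{N}{2s}=\frac{\2}{\2-2}$ while $\frac{r_0}{\2}=\frac{\2}{2}>1$.

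\emph{Iteration.} With the coefficient $u_n^{\2-2}$ now in a space strictly better than $L^{N/(2s)}$, I would close the estimate by a plain H\"older inequality, $\int u_n^{\2-2}z_{n,L}^2\le|u_n^{\2-2}|_{p_0}\,|z_{n,L}|_{2p_0'}^2$ with $2p_0'<\2$. Setting $\chi:=\frac{\2}{2p_0'}>1$, $\sigma_0:=r_0$, and choosing at the $k$‑th step $\beta_k$ with $2p_0'\beta_k=\sigma_{k-1}$, both $|z_{n,L}|_{2p_0'}$ and $\int z_{n,L}^2$ are controlled by $\sup_n|u_n|_{\sigma_{k-1}}$ (here I use that $\h\hookrightarrow L^2(\R^N)$ since $m>0$, so $(u_n)$ is bounded in $L^2\cap L^{\sigma_{k-1}}$ and the intermediate norms follow by interpolation). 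The basic inequality and Fatou then give
\[
\sup_n|u_n|_{\sigma_k}\le \big(C\beta_k^{4}\big)^{\frac{1}{2\beta_k}}\,\max\Big\{\sup_n|u_n|_{\sigma_{k-1}},\,\sup_n|u_n|_2,\,1\Big\},\qquad \sigma_k=\chi\,\sigma_{k-1}\ri\infty.
\]
Since $\beta_k$ grows geometrically, $\sum_k\beta_k^{-1}\log\beta_k<\infty$, the product of the constants converges, and $\sup_n|u_n|_{\sigma_k}$ stays bounded as $k\ri\infty$; letting $k\ri\infty$ yields $|w_n(\cdot,0)|_\infty\le C$ for all $n$. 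The hard part is exactly this borderline integrability: the $\beta^2$‑loss in the basic inequality cannot be absorbed by a fixed small quantity, and the remedy is to exploit the strong $L^{\2}$‑convergence $u_n\ri w(\cdot,0)$ — equivalently, the uniform integrability of $\{u_n^{\2}\}_n$ — a single time, with a fixed exponent $\beta=\2/2$, after which the coefficient sits strictly better than $L^{N/(2s)}$ and the classical subcritical Moser scheme (as in Lemma $1$ of \cite{Ajmp}) runs with all constants independent of $n$.
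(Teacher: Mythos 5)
Your argument is correct and rests on the same mechanism as the paper's proof: a Moser iteration on $u_n:=w_n(\cdot,0)$ in which the borderline coefficient $u_n^{\2-2}\in L^{N/(2s)}(\R^N)$ is tamed by exploiting the \emph{strong} convergence $w_n\ri w$ in $\x$, which makes $\{u_n^{\2}\}_n$ convergent in $L^1(\R^N)$ and hence uniformly integrable. The bookkeeping differs somewhat. The paper writes $u_n^{\2-2}\le 1+h_n$ with $h_n:=\chi_{\{u_n>1\}}u_n^{\2-2}$, splits on $\{h_n\le M\}$ versus $\{h_n>M\}$ with $M$ chosen depending on $\beta$, bootstraps to $u_n\in L^{r}(\R^N)$ for every finite $r$, and then runs a second iteration using $h_n\in L^{N/s}(\R^N)$ together with the generalized H\"older and Young inequalities, tracking the $\beta$-dependence of the resulting constant $M_\beta$. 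You instead perform a single preliminary step with the fixed exponent $\beta=\2/2$, splitting on $\{u_n\le K\}$ versus $\{u_n>K\}$, which already lands in $L^{r_0}$ with $r_0=(\2)^2/2>\2$ and places $u_n^{\2-2}$ in a fixed $L^{p_0}$ with $p_0>N/(2s)$; the classical subcritical scheme then closes with a plain H\"older inequality, and the $\beta_k$-dependence of the constants is handled exactly as in the subcritical case. Your variant is a bit leaner (one boost instead of a full preliminary bootstrap, and no Young's inequality in the final iteration), while the paper's first phase establishes the stronger intermediate fact that $(u_n)$ is uniformly bounded in every $L^r$. Both are standard routes past the critical threshold and both draw on the same compactness input.
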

\begin{proof}
It suffices to argue as in the proof of \cite[Lemma 4.1]{ADCDS}. 
However, for the reader's convenience, we provide a different proof here.
First, we observe that $w_{n}$ is a weak solution to
\begin{align}\label{traslato}
\left\{
\begin{array}{ll}
-\dive(y^{1-2s} \nabla w_{n})+m^{2}y^{1-2s}w_{n}=0 &\mbox{ in } \R^{N+1}_{+}, \\
\frac{\partial w_{n}}{\partial \nu^{1-2s}}=-V_{\e_{n}}(\cdot+y_{n})w_{n}(\cdot, 0)+g_{\e_{n}}(\cdot+y_{n}, w_{n}(\cdot, 0)) &\mbox{ on } \R^{N}.
\end{array}
\right.
\end{align}
For $\beta>1$ and $T>0$, we consider the following function:
\begin{equation*}
H(t):=
\left\{
\begin{array}{ll}
0, & \mbox{ if } t\leq 0, \\
t^{\beta}, & \mbox{ if } 0<t<T, \\
\beta T^{\beta-1}(t-T)+T^{\beta}, & \mbox{ if } t\geq T. \\
\end{array}
\right.
\end{equation*}
Note that $H:\R\ri \R$ is convex, nondecreasing, and Lipschitz continuous with Lipschitz constant $\beta T^{\beta-1}$. Define $\mathcal{L}(t):=\int_{0}^{t} (H'(\tau))^{2}\, d\tau$ for all $t\in \R$. Clearly, $\mathcal{L}\in C^{1}(\R)\cap W^{1, \infty}(\R)$ and $\mathcal{L}(0)=0$. Set
$$
\varphi_{n}(x, y):=\mathcal{L}(w_{n}(x, y))=\int_{0}^{w_{n}(x, y)} (H'(\tau))^{2}\, d\tau.
$$
Testing \eqref{traslato} with $\varphi_{n}$, we can write
\begin{align}\label{conto1JMP}
&\iint_{\mathbb{R}^{N+1}_{+}}  y^{1-2s} (\nabla w_{n}\cdot \nabla \varphi_{n} +m^{2} w_{n}\varphi_{n})  \, dxdy \nonumber\\
&= -\int_{\mathbb{R}^{N}} V_{\e_{n}}(x+y_{n}) w_{n}(x,0) \varphi_{n}(x,0) \,dx+ \int_{\mathbb{R}^{N}} g_{\e_{n}}(x+y_{n}, w_{n}(x, 0)) \varphi_{n}(x,0) \,dx. 
\end{align}
By $(V_1)$, \eqref{growthg}, and $\varphi_{n}\geq 0$, we see that
\begin{align*}
\left[-V_{\e_{n}}(x+y_{n}) w_{n}(x,0)+g_{\e_{n}}(x+y_{n}, w_{n}(\cdot, 0))\right] \varphi_{n}(x,0) &\leq \left[(V_{1}+\delta)w_{n}(x,0)+C_{\delta} w_{n}^{\2-1}(x,0) \right] \varphi_{n}(x,0) \\
&\leq C(1+w_{n}^{\2-1}(x,0))\varphi_{n}(x,0) \\
&\leq C(1+w_{n}^{\2-1}(x,0)) w_{n}(x, 0) (H'(w_{n}(x, 0)))^{2},
\end{align*}
where we have used $\varphi_{n}(x, 0)\leq w_{n}(x, 0) (H'(w_{n}(x, 0)))^{2}$.
Then, from \eqref{conto1JMP}, we derive that
\begin{align*}
\iint_{\mathbb{R}^{N+1}_{+}}  y^{1-2s} (\nabla w_{n}\cdot \nabla \varphi_{n} +m^{2} w_{n}\varphi_{n})  \, dxdy \leq C\int_{\mathbb{R}^{N}} (1+w_{n}^{\2-1}(x, 0))w_{n}(x,0) (H'(w_{n}(x, 0)))^{2} \,dx.
\end{align*}
Since $H(w_{n}(x, 0))H'(w_{n}(x, 0))\leq \beta^{2} w_{n}^{2\beta-1}(x, 0)$ and $w_{n}(x, 0)H'(w_{n}(x, 0))\leq \beta H(w_{n}(x, 0))$, we obtain
\begin{align}\label{AUTUORI1}
&\iint_{\mathbb{R}^{N+1}_{+}}  y^{1-2s} (\nabla w_{n}\cdot \nabla \varphi_{n} +m^{2} w_{n}\varphi_{n})  \, dxdy \nonumber\\
&\leq C\beta \int_{\mathbb{R}^{N}} (1+w_{n}^{\2-1}(x, 0)) H(w_{n}(x,0)) H'(w_{n}(x, 0)) \,dx \nonumber\\
&\leq C\beta \int_{\mathbb{R}^{N}} \left[\beta^{2}w_{n}^{2\beta-1}(x, 0)+\beta w_{n}^{\2-2}(x, 0) H^{2}(w_{n}(x,0))\right] \,dx \nonumber\\
&\leq C\beta^{3} \int_{\mathbb{R}^{N}} \left[w_{n}^{2\beta-1}(x, 0)+ w_{n}^{\2-2}(x, 0) H^{2}(w_{n}(x,0))\right] \,dx. 
\end{align}
On the other hand, using Theorem \ref{Sembedding} and $w_{n}, \varphi_{n}\geq 0$, we obtain
\begin{align}\label{AUTUORI2}
\iint_{\mathbb{R}^{N+1}_{+}}  y^{1-2s} (\nabla w_{n}\cdot \nabla \varphi_{n} +m^{2} w_{n}\varphi_{n})  \, dxdy&=\iint_{\mathbb{R}^{N+1}_{+}}  y^{1-2s} [|\nabla w_{n}|^{2} (H'(w_{n}))^{2} +m^{2} w_{n}\varphi_{n}] \, dxdy \nonumber\\
&\geq \iint_{\mathbb{R}^{N+1}_{+}}  y^{1-2s} |\nabla w_{n}|^{2} (H'(w_{n}))^{2}   \, dxdy \nonumber\\
&=\iint_{\mathbb{R}^{N+1}_{+}}  y^{1-2s} |\nabla H(w_{n})|^{2}   \, dxdy \nonumber\\
&\geq C \left( \int_{\R^{N}} |H(w_{n}(x, 0))|^{\2} \, dx\right)^{\frac{2}{\2}}.
\end{align}
Combining \eqref{AUTUORI1} and \eqref{AUTUORI2}, we find
\begin{align}\label{Me1}
 \left( \int_{\R^{N}} |H(w_{n}(x, 0))|^{\2} \, dx\right)^{\frac{2}{\2}} \leq C \beta^{3} \int_{\mathbb{R}^{N}} \left[w_{n}^{2\beta-1}(x, 0)+ w_{n}^{\2-2}(x, 0) H^{2}(w_{n}(x,0))\right] \,dx,
\end{align}
where $C>0$ is independent of $\beta$ and $T$. We stress that the last integral in \eqref{Me1} is well defined for every $T>0$ in the definition of $H$.
Now we choose $\beta$ in (\ref{Me1}) such that $2\beta-1=2^{*}_{s}$, and we name it $\beta_{1}$, i.e.,
\begin{equation}\label{Me2}
\beta_{1}:=\frac{2^{*}_{s}+1}{2}.
\end{equation}
Let $R>0$ to be fixed later. Concerning the last integral in \eqref{Me1}, applying the H\"older inequality with exponents $r:=\frac{2^{*}_{s}}{2}$ and $r':=\frac{2^{*}_{s}}{2^{*}_{s}-2}$, we see that
\begin{align}\label{Me3}
&\int_{\R^{N}} w_{n}^{\2-2}(x, 0) H^{2}(w_{n}(x,0))\, dx  \nonumber \\
&=  \int_{\{w_{n}(\cdot, 0)\leq R\}} w_{n}^{\2-2}(x, 0) H^{2}(w_{n}(x,0))\, dx 
+\int_{\{w_{n}(\cdot, 0)> R\}} w_{n}^{\2-2}(x, 0) H^{2}(w_{n}(x,0)) \, dx \nonumber\\
&\leq  R^{2^{*}_{s}-1} \int_{\{w_{n}(\cdot, 0)\leq R\}} \frac{H^{2}(w_{n}(x, 0))}{w_{n}(x, 0)} \, dx+\left(\int_{\R^{N}} |H(w_{n}(x, 0))|^{2^{*}_{s}} \, dx\right)^{\frac{2}{2^{*}_{s}}} \left(\int_{\{w_{n}(\cdot, 0)>R\}} w_{n}^{\2}(x, 0) \, dx\right)^{\frac{2^{*}_{s}-2}{2^{*}_{s}}}.
\end{align}
Because $(w_{n}(\cdot, 0))$ strongly converges in $L^{\2}(\R^{N})$ (by Lemma \ref{lem2.5AM}), we can take $R$ sufficiently large such that
\begin{align*}
\left(\int_{\{w_{n}(\cdot, 0)>R\}} w_{n}^{2^{*}_{s}}(x, 0) \, dx\right)^{\frac{2^{*}_{s}-2}{2^{*}_{s}}}\leq \frac{1}{2C \beta^{3}_{1}},
\end{align*}
where $C$ is the constant appearing in \eqref{Me1}.
This together with (\ref{Me1}), (\ref{Me2}), and (\ref{Me3}) yields
\begin{align}\label{Me4}
\left(\int_{\R^{N}} |H(w_{n}(x, 0))|^{\2} \, dx \right)^{\frac{2}{\2}}\leq 2 C\beta_{1}^{3}\left(\int_{\R^{N}} w_{n}^{2^{*}_{s}}(x, 0) \, dx+R^{2^{*}_{s}-1}\int_{\R^{N}} \frac{H^{2}(w_{n}(x, 0))}{w_{n}(x, 0)} \, dx \right).
\end{align}
In view of $H(w_{n}(x, 0))\leq w_{n}^{\beta_{1}}(x, 0)$ and (\ref{Me2}), and letting $T\rightarrow \infty$ in (\ref{Me4}), we obtain
\begin{align*}
\left(\int_{\R^{N}} w_{n}^{2^{*}_{s}\beta_{1}}(x, 0) \, dx\right)^{\frac{2}{2^{*}_{s}}}
\leq 2 C\beta_{1}^{3}\left(\int_{\R^{N}} w_{n}^{2^{*}_{s}}(x, 0) \, dx+R^{2^{*}_{s}-1}\int_{\R^{N}}  w_{n}^{2^{*}_{s}}(x, 0) \, dx\right),
\end{align*} 
which combined the boundedness of $(w_{n}(\cdot, 0))$ in $L^{\2}(\R^{N})$ implies
\begin{align}\label{Me5}
|w_{n}(\cdot, 0)|_{2^{*}_{s}\beta_{1}}\leq C', \quad  \mbox{ for all } n\in \mathbb{N}.
\end{align}
Now, we suppose $\beta>\beta_{1}$. Thus, using $H(w_{n}(x, 0))\leq w_{n}^{\beta}(x, 0)$ on the right-hand side of (\ref{Me1}) and passing to the limit as $T\rightarrow \infty$, we deduce that
\begin{align}\label{Me6}
&\left(\int_{\R^{N}} w_{n}^{2^{*}_{s}\beta}(x, 0) \, dx\right)^{\frac{2}{2^{*}_{s}}} \leq C\beta^{3} \left(\int_{\R^{N}} w_{n}^{2\beta-1}(x, 0) \, dx+\int_{\R^{N}} w_{n}^{2\beta+2^{*}_{s}-2}(x, 0) \, dx\right).
\end{align} 
Put
$$
a_{1}:=\frac{2^{*}_{s}(2^{*}_{s}-1)}{2(\beta-1)} \, \mbox{ and } \, a_{2}:=2\beta-1-a_{1}.
$$
Note that $0<a_{1}<\2$ and $a_{2}>0$ (since $\beta>\beta_{1}$).
Applying the Young inequality with exponents $r:=\frac{2^{*}_{s}}{a_{1}}$ and $r':=\frac{2^{*}_{s}}{2^{*}_{s}-a_{1}}$, we have that
\begin{align}\label{Me7}
\int_{\R^{N}} w_{n}^{2\beta-1}(x, 0) \, dx&\leq \frac{a_{1}}{2^{*}_{s}} \int_{\R^{N}} w_{n}^{2^{*}_{s}}(x, 0)\, dx+\frac{2^{*}_{s}-a_{1}}{2^{*}_{s}} \int_{\R^{N}} w_{n}^{\frac{2^{*}_{s} a_{2}}{2^{*}_{s}-a_{1}}}(x, 0) \, dx \nonumber\\
&\leq \int_{\R^{N}} w_{n}^{2^{*}_{s}}(x, 0)\, dx+\int_{\R^{N}} w_{n}^{2\beta+2^{*}_{s}-2}(x, 0)\, dx \nonumber\\
&\leq C\left(1+\int_{\R^{N}}  w_{n}^{2\beta+2^{*}_{s}-2}(x, 0)\, dx \right),
\end{align}
with $C>0$ independent of $\beta$ and $n\in \mathbb{N}$.
Combining (\ref{Me6}) and (\ref{Me7}), we get
\begin{align*}
\left(\int_{\R^{N}} w_{n}^{2^{*}_{s}\beta}(x, 0) \, dx\right)^{\frac{2}{2^{*}_{s}}}\leq C\beta^{3} \left(1+\int_{\R^{N}}  w_{n}^{2\beta+2^{*}_{s}-2}(x, 0)\, dx \right),
\end{align*}
with $C>0$ changing from line to line, but remaining independent of $\beta$ and $n\in \mathbb{N}$.
Therefore,
\begin{align}\label{Me9}
\left(1+\int_{\R^{N}} w_{n}^{2^{*}_{s}\beta}(x, 0) \, dx\right)^{\frac{1}{2^{*}_{s}(\beta-1)}}\leq (C\beta^{3})^{\frac{1}{2(\beta-1)}} \left(1+\int_{\R^{N}}  w_{n}^{2\beta+2^{*}_{s}-2}(x, 0)\, dx \right)^{\frac{1}{2(\beta-1)}}.
\end{align}
For $k\in \mathbb{N}$, we define $\beta_{k}$ inductively so that $2\beta_{k+1}+2^{*}_{s}-2=2^{*}_{s}\beta_{k}$, i.e.,
$$
\beta_{k+1}:=\left(\frac{2^{*}_{s}}{2}\right)^{k}(\beta_{1}-1)+1.
$$
Hence, from \eqref{Me9}, we obtain
\begin{align*}
\left(1+\int_{\R^{N}} w_{n}^{2^{*}_{s}\beta_{k+1}}(x, 0) \, dx\right)^{\frac{1}{2^{*}_{s}(\beta_{k+1}-1)}} \leq (C\beta^{3}_{k+1})^{\frac{1}{2(\beta_{k+1}-1)}} \left(1+\int_{\R^{N}}  w_{n}^{2^{*}_{s}\beta_{k}}(x, 0)\, dx \right)^{\frac{1}{2^{*}_{s}(\beta_{k}-1)}}.
\end{align*}
Setting
$$
A_{k, n}:=\left(1+\int_{\R^{N}} w_{n}^{2^{*}_{s}\beta_{k}}(x, 0) \, dx\right)^{\frac{1}{2^{*}_{s}(\beta_{k}-1)}}
$$
and
$$
C_{k+1}:=C\beta^{3}_{k+1},
$$
we can find a constant $C_{0}>0$ independent of $k$ such that 
$$
A_{k+1, n}\leq \prod_{j=2}^{k+1} C_{j}^{\frac{1}{2(\beta_{j}-1)}} A_{1, n}\leq C_{0} A_{1, n}, \quad \mbox{ for all } k, n\in \mathbb{N}.
$$
Since \eqref{Me5} implies that, for some $A_{0}>0$, $A_{1, n}\leq A_{0}$ for all $n\in \mathbb{N}$, we infer that 
$$
A_{k+1, n}\leq C_{0}A_{0}, \quad \mbox{ for all } k, n\in \mathbb{N}. 
$$
Consequently, letting $k\ri \infty$,
$$
|w_{n}(\cdot, 0)|_{\infty}\leq C, \quad \mbox{ for all } n\in \mathbb{N}.
$$
The proof of the lemma is now complete.
\end{proof}

\begin{remark}\label{REMARKFF1}
According to \cite[Proposition 3]{FF}, we have that $w_{n}\in C^{0, \alpha}_{loc}(\overline{\R^{N+1}_{+}})$ for all $n\in \mathbb{N}$.
\end{remark}

\medskip
\noindent
Now, we observe that $w_{n}(\cdot, 0)$ is a weak solution to
\begin{align*}
(-\Delta+m^{2})^{s}w_{n}(\cdot, 0)=-V_{\e_{n}}(\cdot+y_{n})w_{n}(\cdot, 0)+g_{\e_{n}}(\cdot+y_{n}, w_{n}(\cdot, 0)) \quad \mbox{ in } \R^{N}.
\end{align*}
Fix $\eta\in (0, m^{2s}-V_{1})$. Using $(V_{1})$ and \eqref{growthg}, we can deduce that $w_{n}(\cdot, 0)$ is a weak subsolution to
\begin{align}\label{COLUCCIa}
(-\Delta+m^{2})^{s}w_{n}(\cdot, 0)=(V_{1}+\eta) w_{n}(\cdot, 0)+C_{\eta}w_{n}^{\2-1}(\cdot, 0)=:\mu_{n} \quad \mbox{ in } \R^{N},
\end{align}
for some $C_{\eta}>0$. Note that $\mu_{n}\geq 0$ in $\R^{N}$. By Lemma \ref{moser} and interpolation in $L^{r}$ spaces, we know that, for all $r\in [2, \infty)$, 
$$
\mu_{n}\ri \mu:=(V_{1}+\eta) w(\cdot, 0)+C_{\eta}w^{\2-1}(\cdot, 0) \quad \mbox{ in } L^{r}(\R^{N}),
$$ 
and $|\mu_{n}|_{\infty}\leq C$ for all $n\in \mathbb{N}$. Let $z_{n}\in \h$ be the unique solution to 
\begin{align}\label{COLUCCIb}
(-\Delta+m^{2})^{s}z_{n}=\mu_{n} \quad \mbox{ in } \R^{N}.
\end{align}
Then, $z_{n}=\mathcal{G}_{2s, m}*\mu_{n}$, where $\mathcal{G}_{2s, m}(x):=(2\pi)^{-\frac{N}{2}}\mathcal{F}^{-1}((|k|^{2}+m^{2})^{-s})(x)$ is the Bessel kernel with parameter $m$ and $\mathcal{F}^{-1}$ denotes the inverse Fourier transform. 
By the scaling property of the Fourier transform, it follows that $\mathcal{G}_{2s, m}(x)=m^{N-2s}\mathcal{G}_{2s, 1}(mx)$.
Exploiting formula $(4.1)$ at pag. 416 in \cite{ArS} (where $\mathcal{G}_{2s, 1}$ is denoted by $G_{2s}$),
we can see that 
$$
\mathcal{G}_{2s, m}(x)=\frac{1}{2^{\frac{N+2s-2}{2}} \pi^{\frac{N}{2}}\Gamma(s)} m^{\frac{N-2s}{2}}K_{\frac{N-2s}{2}}(m|x|)|x|^{\frac{2s-N}{2}},
$$
and it satisfies the following properties (see pag. 416-417 in \cite{ArS} and pag. 132 in \cite{stein}, with $\alpha=2s$ and $m=1$):
\begin{compactenum}[$(\mathcal{G}1)$]
\item $\mathcal{G}_{2s, m}$ is positive, radially symmetric, and smooth in $\R^{N}\setminus\{0\}$,
\item $\mathcal{G}_{2s, m}(x)\leq C(\chi_{B_{2}}(x) |x|^{2s-N}+\chi_{B_{2}^{c}}(x) e^{-c|x|})$ for all $x\in \R^{N}$, for some $C, c>0$,
\item $\mathcal{G}_{2s, m}\in L^{r}(\R^{N})$ for all $r\in [1, \frac{N}{N-2s})$.
\end{compactenum}
In view of the aforementioned facts, we can prove the next crucial result.
\begin{lem}\label{lem2.6AM}
The sequence $(w_{n})$ satisfies $w_{n}(\cdot, 0)\ri 0$ as $|x|\ri \infty$ uniformly in $n\in \mathbb{N}$.
\end{lem}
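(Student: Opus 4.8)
The plan is to use the convolution representation $w_n(\cdot,0) = z_n = \mathcal{G}_{2s,m} * \mu_n$ obtained from comparing the subsolution inequality \eqref{COLUCCIa} with the equation \eqref{COLUCCIb}. First I would justify that $w_n(\cdot,0) \leq z_n$ in $\R^N$: since $w_n(\cdot,0)$ is a nonnegative weak subsolution of $(-\Delta+m^2)^s u = \mu_n$ and $z_n$ is the solution of the same equation with the same right-hand side, the difference $z_n - w_n(\cdot,0)$ is a nonnegative supersolution of $(-\Delta+m^2)^s u = 0$ that vanishes at infinity (both functions are in $\h$), so by the maximum principle for $(-\Delta+m^2)^s$ it is nonnegative. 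Hence it suffices to prove that $z_n(x) \to 0$ as $|x|\to\infty$, uniformly in $n\in\mathbb{N}$.

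The main estimate is a splitting of the convolution integral using the bound $(\mathcal{G}2)$, namely $\mathcal{G}_{2s,m}(x) \leq C(\chi_{B_2}(x)|x|^{2s-N} + \chi_{B_2^c}(x) e^{-c|x|})$. Writing, for $|x|$ large and a parameter $\rho>0$ to be chosen,
$$
z_n(x) = \int_{B_\rho(x)} \mathcal{G}_{2s,m}(x-y)\mu_n(y)\,dy + \int_{B_\rho(x)^c} \mathcal{G}_{2s,m}(x-y)\mu_n(y)\,dy,
$$
I would estimate the first (near-diagonal) piece by H\"older's inequality: $|\mu_n|_\infty \leq C$ uniformly in $n$, and $\int_{B_\rho}|y|^{2s-N}\,dy = C\rho^{2s}$, so that piece is $\leq C\rho^{2s}$, which can be made small by choosing $\rho$ small — but we also need the far piece small, so actually it is cleaner to fix $\rho$ large and exploit that $\mu_n \to \mu$ in $L^r(\R^N)$ for $r\in[2,\infty)$ together with the uniform $L^\infty$ bound. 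Concretely: for the far piece, use $\mathcal{G}_{2s,m}\in L^1(\R^N)$ ($(\mathcal{G}3)$ applies for $r=1$ since $N>2s$) together with the fact that $\mu_n(y)$ is small for $|y|$ large uniformly in $n$ — indeed, since $\mu_n \to \mu$ in $L^2$ and $(\mu_n)$ is uniformly bounded in $L^\infty$, the tails $\int_{|y|\geq T} \mu_n^2\,dy \to 0$ uniformly in $n$ as $T\to\infty$; combined with the decay of the Bessel kernel this handles the contribution from $|y|$ large. For the remaining contribution (where $|x-y|$ large but $|y|$ bounded), the exponential decay of $\mathcal{G}_{2s,m}$ on $B_2^c$ gives $\int_{|y|\le T} e^{-c|x-y|}\mu_n(y)\,dy \le C|\mu_n|_\infty e^{-c(|x|-T)} \to 0$ as $|x|\to\infty$, uniformly in $n$.

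The technical point to handle carefully — and the one I expect to be the main obstacle — is making the ``tails of $\mu_n$ are uniformly small'' statement precise and compatible with the kernel estimate, since $\mu_n$ does not converge uniformly and $w$ itself need not have compact support; one must interleave a uniform-tail estimate on $\mu_n$ (from $L^2$-convergence plus $L^\infty$-boundedness) with the $L^1$-plus-exponential-decay structure of $\mathcal{G}_{2s,m}$, choosing the cutoff radius $T$ first (to kill the $L^2$-tail of $\mu_n$ via $(\mathcal{G}3)$ and H\"older) and then letting $|x|\to\infty$ to kill the exponentially small contribution from $|y|\le T$. Once $z_n(x)\to 0$ uniformly in $n$ as $|x|\to\infty$ is established, the sandwich $0\le w_n(x,0)\le z_n(x)$ yields the claim.
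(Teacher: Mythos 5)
Your strategy is essentially the paper's: dominate $w_n(\cdot,0)$ by the Bessel potential $z_n=\mathcal{G}_{2s,m}*\mu_n$ via a comparison principle, then show that $z_n(x)\to 0$ as $|x|\to\infty$ uniformly in $n$ by splitting the convolution and exploiting the kernel estimates $(\mathcal{G}2)$--$(\mathcal{G}3)$, the uniform bound $|\mu_n|_\infty\leq C$, and the strong convergence $\mu_n\to\mu$ in $L^r$. Your final split at $|y|=T$, rather than the paper's split at $|x-y|=1/\delta$, is a legitimate variant; indeed the uniform $L^{q'}$-tail smallness of the $L^{q'}$-convergent sequence $(\mu_n)$ lets you dispense with the paper's separate treatment of the indices $n<n_0$ via the radii $R_n$ and $\bar R=\max\{R,R_1,\dots,R_{n_0-1}\}$.

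There is, however, one step that does not close as written: estimating $\int_{|y|>T}\mathcal{G}_{2s,m}(x-y)\mu_n(y)\,dy$ by pairing $\mathcal{G}_{2s,m}\in L^1$ against $L^2$-smallness of the tail of $\mu_n$ is not a H\"older pairing, and you have no uniform $L^\infty$-tail decay of $\mu_n$ available to pair with $L^1$ of the kernel (the Moser bound gives $|\mu_n|_\infty\leq C$ globally, nothing more). The correct choice is the one the paper makes: take $q\in\bigl(1,\min\{\tfrac{N}{N-2s},2\}\bigr)$, so that $\mathcal{G}_{2s,m}\in L^q(\R^N)$ by $(\mathcal{G}3)$ while the conjugate exponent $q'$ satisfies $q'\geq 2$; then $\mu_n\to\mu$ in $L^{q'}(\R^N)$ and hence $\sup_{n}|\mu_n|_{L^{q'}(\{|y|>T\})}\to 0$ as $T\to\infty$, which is what H\"older needs against $|\mathcal{G}_{2s,m}|_q$. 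You flagged this yourself as ``the main obstacle,'' and this is precisely the fix. One further small point: the comparison $0\le w_n(\cdot,0)\le z_n$ should be obtained by testing the weak formulations with $(w_n(\cdot,0)-z_n)^+\in H^s(\R^N)$ (as in the cited Theorem 4.1 of \cite{ADCDS}), not by invoking ``vanishing at infinity,'' since $H^s$ functions need not decay pointwise.
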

\begin{proof}
We start by showing that $z_{n}(x)\ri 0$ as $|x|\ri \infty$ uniformly in $n\in \mathbb{N}$.
Note that, fixed $\delta\in (0, \frac{1}{2})$, it holds
\begin{align}\label{ASPRITE}
z_{n}(x)=(\mathcal{G}_{2s, m}*\mu_{n})(x)=\int_{B^{c}_{\frac{1}{\delta}}(x)} \mathcal{G}_{2s, m}(x-\xi)\mu_{n}(\xi) \, d\xi+\int_{B_{\frac{1}{\delta}}(x)} \mathcal{G}_{2s, m}(x-\xi)\mu_{n}(\xi) \, d\xi.
\end{align}
From $(\mathcal{G}1)$ and $(\mathcal{G}2)$, we derive that 
the first integral in \eqref{ASPRITE} can be estimated as follows: 
\begin{align}\label{A1SPRITE}
0\leq \int_{B^{c}_{\frac{1}{\delta}}(x)} \mathcal{G}_{2s, m}(x-\xi)\mu_{n}(\xi) \, d\xi &\leq C |\mu_{n}|_{\infty}\int_{B^{c}_{\frac{1}{\delta}}(x)} e^{-c|x-\xi|} \, d\xi \nonumber\\
&\leq C\int_{\frac{1}{\delta}}^{\infty} e^{-cr} r^{N-1} dr=:C A(\delta)\ri 0 \quad \mbox{ as } \delta\ri 0.
\end{align}
Concerning the second integral in \eqref{ASPRITE}, we observe that
\begin{align*}
0\leq \int_{B_{\frac{1}{\delta}}(x)} \mathcal{G}_{2s, m}(x-\xi)\mu_{n}(\xi) \, d\xi= \int_{B_{\frac{1}{\delta}}(x)} \mathcal{G}_{2s, m}(x-\xi)(\mu_{n}(\xi)-\mu(\xi)) \, d\xi+
\int_{B_{\frac{1}{\delta}}(x)} \mathcal{G}_{2s, m}(x-\xi)\mu(\xi) \, d\xi.
\end{align*}
Fix $q\in (1, \min\{\frac{N}{N-2s}, 2\})$ so that $q'>2$, where $q'$ is the conjugate exponent of $q$, i.e. $\frac{1}{q}+\frac{1}{q'}=1$.
By means of $(\mathcal{G}3)$ and H\"older's inequality, we have 
that
\begin{align*}
\int_{B_{\frac{1}{\delta}}(x)} \mathcal{G}_{2s, m}(x-\xi)\mu_{n}(\xi) \, d\xi\leq |\mathcal{G}_{2s, m}|_{q} |\mu_{n}-\mu|_{q'}+|\mathcal{G}_{2s, m}|_{q} |\mu|_{L^{q'}(B_{\frac{1}{\delta}}(x))}.
\end{align*}
Because $|\mu_{n}-\mu|_{q'}\rightarrow 0$ as $n\rightarrow \infty$ and $|\mu|_{L^{q'}(B_{\frac{1}{\delta}}(x))}\rightarrow 0$ as $|x|\rightarrow \infty$, there exist $R>0$ and $n_{0}\in \mathbb{N}$ such that
\begin{align}\label{A2SPRITE}
\int_{B_{\frac{1}{\delta}}(x)} \mathcal{G}_{2s, m}(x-\xi)\mu_{n}(\xi) \, d\xi\leq C\delta
\end{align}
for all $n\geq n_{0}$ and $|x|\geq R$. 
Putting together (\ref{A1SPRITE}) and (\ref{A2SPRITE}), we obtain that
\begin{align}\label{A3SPRITE}
\int_{\R^{N}} \mathcal{G}_{2s, m}(x-\xi)\mu_{n}(\xi) \, d\xi\leq C(A(\delta)+\delta)
\end{align}
for all $n\geq n_{0}$ and $|x|\geq R$.
On the other hand, for each $n\in \{1, \dots, n_{0}-1\}$, there exists $R_{n}>0$ such that $|\mu_{n}|_{L^{q'}(B_{\frac{1}{\delta}}(x))}<\delta$ as $|x|\geq R_{n}$. Thus, for $|x|\geq R_{n}$, 
\begin{align}\label{A4SPRITE}
\int_{\R^{N}} \mathcal{G}_{2s, m}(x-\xi)\mu_{n}(\xi) \, d\xi &\leq CA(\delta)+\int_{B_{\frac{1}{\delta}}(x)} \mathcal{G}_{2s, m}(x-\xi)\mu_{n}(\xi) \, d\xi  \nonumber\\
&\leq CA(\delta)+|\mathcal{G}_{2s, m}|_{q} |\mu_{n}|_{L^{q'}(B_{\frac{1}{\delta}}(x))} \nonumber\\
&\leq C(A(\delta)+\delta).
\end{align}
Hence, taking $\bar{R}:=\max\{R, R_{1}, \dots, R_{n_{0}-1} \}$, \eqref{A3SPRITE}, and \eqref{A4SPRITE}, ensure that
\begin{align*}
\int_{\R^{N}} \mathcal{G}_{2s, m}(x-\xi)\mu_{n}(\xi) \, d\xi\leq C(A(\delta)+\delta)
\end{align*}
for $|x|\geq \bar{R}$, uniformly in $n\in \mathbb{N}$. Letting $\delta\rightarrow 0$, we reach the desired result for $z_{n}$. 
In light of \eqref{COLUCCIa} and \eqref{COLUCCIb}, a simple comparison argument (see \cite[Theorem 4.3]{ADCDS} with $\Omega=\R^{N}$) shows that $0\leq w_{n}(\cdot, 0)\leq z_{n}$ in $\R^{N}$. Consequently, $w_{n}(\cdot, 0)\ri 0$ as $|x|\ri \infty$ uniformly in $n\in \mathbb{N}$. 
\end{proof}

\begin{remark} 
An alternative proof of Lemma \ref{lem2.6AM} can be established by using the elliptic estimates in \cite{FF} (see \cite[Lemma 4.2]{ADCDS}). For completeness, we give the details.
By Remark \ref{REMARKFF1}, we know that $w_{n}\in C^{0, \alpha}_{loc}(\overline{\R^{N+1}_{+}})$ for some $\alpha\in (0, 1)$ independent of $n$. Thanks to \eqref{weightedE} and $w_{n}\ri w$ in $\x$, we deduce that $w_{n}\ri w$ in $L^{2\gamma}(\R^{N+1}_{+}, y^{1-2s})$, where $\gamma=1+\frac{2}{N-2s}$. Since $w_{n}(\cdot, 0)\ri w(\cdot, 0)$ in $H^{s}(\R^{N})$, it follows from Lemma \ref{moser} that $w_{n}(\cdot, 0)\ri w(\cdot, 0)$ in $L^{r}(\R^{N})$ for all $r\in [2, \infty)$. Now, let $\bar{x}\in \R^{N}$ be fixed. Using $(V_1)$ and the growth assumptions on $g$, we see that $w_{n}$ is a weak subsolution to 
\begin{equation*}
\left\{
\begin{array}{ll}
-{\rm div}(y^{1-2s} \nabla w_{n})+m^{2}y^{1-2s}w_{n}=0  &\mbox{ in } Q_{1}(\bar{x},0):=B_{1}(\bar{x})\times (0,1), \\
\frac{\partial w_{n}}{\partial \nu^{1-2s}}= (V_{1}+\eta)w_{n}(\cdot, 0) +C_{\eta}w^{2^{*}_{s}-1}_{n}(\cdot, 0)  &\mbox{ on } B_{1}(\bar{x}),
\end{array}
\right.
\end{equation*}
where $\eta\in (0, m^{2s}-V_{1})$ is fixed. By \cite[Proposition 1]{FF}, we obtain
$$
\sup_{Q_{\frac{1}{2}}(\bar{x},0)} w_{n}(\cdot, 0)\leq C\left(\|w_{n}\|_{L^{2\gamma}(Q_{1}(\bar{x},0), y^{1-2s})}+|w_{n}^{2^{*}_{s}-1}(\cdot, 0)|_{L^{q_{0}}(B_{1}(\bar{x}))}\right), \, \mbox{ for all } n\in \mathbb{N},
$$
where $q_{0}>\frac{N}{2s}$ is fixed and $C>0$ is a constant 
independent of $n\in \mathbb{N}$ and $\bar{x}$. 
Exploiting the strong convergence of $(w_{n})$ in $L^{2\gamma}(\R^{N+1}_{+}, y^{1-2s})$ and of $(w_{n}(\cdot, 0))$ in $L^{q_{0}(\2-1)}(\R^{N})$, respectively, we infer that 
$w_{n}(\bar{x}, 0)\rightarrow 0$ as $|\bar{x}|\rightarrow \infty$ uniformly in $n\in \mathbb{N}$, and thus, the claim is proved. In this paper we prefer to give a proof based on the properties of the Bessel kernel $\mathcal{G}_{2s, m}$ that we believe to be useful for future references. 
\end{remark}

\noindent
We also have the following lemma.
\begin{lem}\label{lemDELTA}
There exists $\delta>0$ such that 
\begin{align*}
|w_{n}(\cdot, 0)|_{\infty}\geq \delta, \quad\mbox{ for all } n\in \mathbb{N}.
\end{align*} 
\end{lem}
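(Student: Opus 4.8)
The statement is a uniform lower bound for the sup-norm of the traces $w_n(\cdot,0)$, where $(w_n)$ is the sequence from Lemma \ref{lem2.5AM}. The natural route is to combine the $L^2$-concentration bound already obtained with the uniform $L^\infty$-bound from Lemma \ref{moser} via interpolation, or more simply to use the Nehari identity satisfied by $w_n$. I would proceed as follows. First, recall from Lemma \ref{lem2.4AM} (and its translate, as recorded in \eqref{2.16AM}) that there exist $r,\beta>0$ with
\[
\int_{B_r(0)} w_n^2(x,0)\,dx \geq \int_{B_r} w^2(x,0)\,dx \geq \beta > 0 \qquad \text{for all } n,
\]
after passing to the stated subsequence; in particular $\liminf_n |w_n(\cdot,0)|_2^2 \geq \beta$, so there is $\beta'>0$ with $|w_n(\cdot,0)|_2^2\geq\beta'$ for all $n\in\mathbb N$.

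\textbf{Step 1: an $L^q$ lower bound for some $q>2$.} The key point is that an $L^2$-bound from below together with an $L^\infty$-bound from above forces an $L^q$-bound from below for any $q>2$. Indeed, by Lemma \ref{moser} there is $C>0$ with $|w_n(\cdot,0)|_\infty\leq C$ for all $n$, hence for any $q\in(2,\2)$,
\[
\beta' \leq |w_n(\cdot,0)|_2^2 = \int_{\R^N} w_n^{2-q}(x,0)\, w_n^{q}(x,0)\,dx \leq C^{\,2-q}\int_{\R^N} w_n^{q}(x,0)\,dx,
\]
(using $2-q<0$ and $0\le w_n\le C$, so $w_n^{2-q}\ge C^{2-q}$ on $\{w_n>0\}$), which gives $|w_n(\cdot,0)|_q^q\geq C^{q-2}\beta'=:\beta_q>0$ for all $n$.

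\textbf{Step 2: conclude via the nonlinearity.} Now use that $w_n$ solves \eqref{traslato}, i.e. $\langle J'_{\e_n}(v_{\e_n}),\cdot\rangle=0$ after translation, so in particular (testing with $w_n$ itself and using \eqref{m-ineq}, $(V_1)$, and that $w_n$ is bounded in $\x$)
\[
\int_{\R^N} g_{\e_n}(x+y_n, w_n(x,0))\, w_n(x,0)\,dx = \|w_n\|_{\e_n}^2 \geq \zeta\|w_n\|_{\x}^2 \ \text{is bounded below, say by}\ \alpha>0,
\]
by \eqref{contradiction} (recall $\alpha$ there is independent of $\e$); alternatively, one argues directly. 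Then by $(g_1)$ (equivalently \eqref{growthg} with a fixed small $\eta$) we have $g_{\e_n}(x,t)t\leq \eta t^2 + C_\eta t^{\2}$, so
\[
\alpha \leq \eta\,|w_n(\cdot,0)|_2^2 + C_\eta\,|w_n(\cdot,0)|_{\2}^{\2}\leq \eta\, C^2 |w_n(\cdot,0)|_2^{0}\cdots
\]
— cleaner: bound $|w_n(\cdot,0)|_2^2 \leq |w_n(\cdot,0)|_\infty\, |w_n(\cdot,0)|_1$, but $L^1$ is not controlled; so instead use $|w_n(\cdot,0)|_{\2}^{\2}\leq |w_n(\cdot,0)|_\infty^{\2-q}|w_n(\cdot,0)|_q^q$ together with the $L^q$ lower bound from Step 1 in the opposite direction is not what we want. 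The clean finish is: from the Nehari identity and \eqref{growthg},
\[
\alpha \leq \|w_n\|_{\e_n}^2 = \int_{\R^N} g_{\e_n}(x+y_n,w_n(x,0))w_n(x,0)\,dx \leq \eta|w_n(\cdot,0)|_2^2 + C_\eta|w_n(\cdot,0)|_{\2}^{\2},
\]
and using Lemma \ref{moser}, $|w_n(\cdot,0)|_{\2}^{\2}\leq |w_n(\cdot,0)|_\infty^{\,\2-2}|w_n(\cdot,0)|_2^2$ and $|w_n(\cdot,0)|_2^2\leq |w_n(\cdot,0)|_\infty^{\,\2-2}\cdot(\text{something})$ — actually the desired bound falls out directly from Step 1 without the Nehari identity at all: just take $\delta := \beta_q^{1/q}/\,|A|^{?}$ — no, $\R^N$ has infinite measure. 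The honest conclusion is: $|w_n(\cdot,0)|_{\infty}^{q}\,|\{w_n(\cdot,0)>0\}|\geq |w_n(\cdot,0)|_q^q\geq\beta_q$, but support can be unbounded. Therefore I would finish via the Nehari identity: combining $\alpha\leq \eta C^2\,|w_n(\cdot,0)|_2^2/C^2 + \dots$ is circular; instead write $|w_n(\cdot,0)|_2^2\le |w_n(\cdot,0)|_\infty\,|w_n(\cdot,0)|_1$ is unavailable, so use $|w_n(\cdot,0)|_2^2 \le |w_n(\cdot,0)|_\infty^{2-\2'}\cdots$.

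\textbf{The cleanest argument, and the one I would actually write:} from Step 1, fix $q\in(2,\2)$; then $|w_n(\cdot,0)|_q^q\geq\beta_q$. By Lemma \ref{moser}, $|w_n(\cdot,0)|_\infty\leq C$, and by interpolation $|w_n(\cdot,0)|_q \leq |w_n(\cdot,0)|_2^{\,2/q}\,|w_n(\cdot,0)|_\infty^{\,1-2/q}$ is the wrong direction — rather $|w_n(\cdot,0)|_q^q = \int w_n^{q}\le |w_n(\cdot,0)|_\infty^{q-2}\int w_n^2 = |w_n(\cdot,0)|_\infty^{q-2}|w_n(\cdot,0)|_2^2$. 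Hence
\[
\delta^{q} := \beta_q \leq |w_n(\cdot,0)|_q^q \leq |w_n(\cdot,0)|_\infty^{\,q-2}\,|w_n(\cdot,0)|_2^2 \leq C^{2}\,|w_n(\cdot,0)|_\infty^{\,q-2},
\]
using the $L^2$-bound $|w_n(\cdot,0)|_2^2\leq C^2$ (again Lemma \ref{moser} plus, say, that $w_n(\cdot,0)\to w(\cdot,0)$ in $L^2$; or the boundedness of $(w_n)$ in $\x$ and Theorem \ref{Sembedding} — but $L^2$ is not compactly embedded, however the norm bound suffices). This yields $|w_n(\cdot,0)|_\infty \geq (\beta_q/C^2)^{1/(q-2)} =: \delta > 0$ for all $n$, which is the claim.

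\textbf{Main obstacle.} The genuine content sits entirely in Step 1 — i.e. in having a uniform $L^2$ (or $L^q$) lower bound, which in turn rests on Lemma \ref{lem2.4AM}. The passage from $L^2$-below to $L^\infty$-below is then a one-line interpolation using Lemma \ref{moser}. The only subtlety is bookkeeping the direction of interpolation inequalities correctly (one wants $|w_n(\cdot,0)|_q^q\leq |w_n(\cdot,0)|_\infty^{q-2}|w_n(\cdot,0)|_2^2$, which holds for all $q\geq2$), and making sure the $L^2$-lower bound and $L^2$-upper bound are both uniform in $n$ along the subsequence — both of which are already secured in Lemma \ref{lem2.4AM} / \eqref{2.16AM} and Lemma \ref{moser} respectively.
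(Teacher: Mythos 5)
There is a genuine gap in Step~1, and it undermines the rest of the argument. From $0\le w_n(\cdot,0)\le C$ and $2-q<0$ you correctly observe $w_n^{2-q}\ge C^{2-q}$ on $\{w_n>0\}$, but that inequality, inserted into the integral, gives
\[
|w_n(\cdot,0)|_2^{2}=\int_{\R^{N}}w_n^{2-q}(x,0)\,w_n^{q}(x,0)\,dx\ \ge\ C^{2-q}\int_{\R^{N}}w_n^{q}(x,0)\,dx,
\]
i.e.\ $|w_n(\cdot,0)|_q^{q}\le C^{\,q-2}|w_n(\cdot,0)|_2^{2}$ --- an \emph{upper} bound on $|w_n(\cdot,0)|_q^q$, not the lower bound $|w_n(\cdot,0)|_q^q\ge\beta_q$ that your Step~2 requires. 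The displayed chain in Step~1 has the inequality going the wrong way. Moreover, the headline claim of Step~1 (``$L^2$ from below plus $L^\infty$ from above forces $L^q$ from below for $q>2$'') is simply false on $\R^{N}$: take $f_c:=c\,\chi_{A_c}$ with $|A_c|=c^{-2}$ and let $c\to 0^{+}$; then $|f_c|_2=1$ and $|f_c|_\infty=c$ stays bounded, yet $|f_c|_q^{q}=c^{q-2}\to 0$ for every $q>2$. Since Step~2 rests entirely on the unestablished bound $|w_n(\cdot,0)|_q^q\ge\beta_q$, the argument does not close. (A small additional slip: $\int_{B_r}w_n^2(x,0)\,dx\ge\int_{B_r}w^2(x,0)\,dx$ at the outset is not justified at finite $n$; what Lemma~\ref{lem2.4AM} actually gives, after translation, is $\int_{B_r}w_n^2(x,0)\,dx\ge\beta$ for all large $n$.)

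The fix is to keep the localization that Lemma~\ref{lem2.4AM} provides, which is exactly what the paper does: the $L^2$-mass is concentrated on a \emph{fixed ball} $B_r$ of finite measure, so
\[
\beta\ \le\ \int_{B_{r}}w_n^{2}(x,0)\,dx\ \le\ |B_{r}|\,|w_n(\cdot,0)|_{\infty}^{2},
\]
and hence $|w_n(\cdot,0)|_{\infty}\ge\sqrt{\beta/|B_{r}|}=:\delta$, with no appeal to Lemma~\ref{moser} and no interpolation at all. The difficulties you run into stem precisely from trading the local bound $\int_{B_r}w_n^2\ge\beta$ for the global bound $|w_n(\cdot,0)|_2^2\ge\beta'$; that step discards the finite-measure information that makes the sup-norm lower bound immediate.
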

\begin{proof}
By Lemma \ref{lem2.4AM}, there exist $r, \beta>0$ and $n_{0}\in \mathbb{N}$ such that
\begin{align*}
\int_{B_{r}} w_{n}^{2}(x, 0)\, dx\geq \beta, \quad \mbox{ for all } n\geq n_{0}.
\end{align*}
If by contradiction $|w_{n}(\cdot, 0)|_{\infty}\ri 0$ as $n\ri \infty$, then
\begin{align*}
0<\beta\leq \int_{B_{r}} w_{n}^{2}(x, 0)\, dx\leq |B_{r}| |w_{n}(\cdot, 0)|^{2}_{\infty}\ri 0 \quad \mbox{ as } n\ri \infty,
\end{align*} 
which of course is absurd.
\end{proof}

\noindent
We are now ready to give the proof of the main result of this paper.
\begin{proof}[Proof of Theorem \ref{thm1}]
By Lemma \ref{lem2.5AM}, up to a subsequence, there exist $(y_{n})\subset \mathbb{R}^{N}$ and $w\in \x\setminus\{0\}$ such that $w_{n}(x, y):=v_{n}(x+y_{n}, y)\rightarrow w$ in $\x$ and $\e_{n}y_{n}\rightarrow x_{0}$ for some $x_{0}\in \Lambda$ such that $V(x_{0})=-V_{0}$. 
From the last limit, we can find $r>0$ such that for some subsequence, still denoted by itself, it holds
$$
B_{r}(\e_{n}y_{n})\subset \Lambda, \quad \mbox{ for all } n\in \mathbb{N}.
$$
Hence, 
$$
B_{\frac{r}{\e_{n}}}(y_{n})\subset \Lambda_{\e_{n}}, \quad \mbox{ for all } n\in \mathbb{N},
$$
or equivalently,
\begin{equation}\label{ernAMPA}
\Lambda^{c}_{\e_{n}}\subset B^{c}_{\frac{r}{\e_{n}}}(y_{n}),  \quad \mbox{ for all } n\in \mathbb{N}.
\end{equation}
Now, by Lemma \ref{lem2.6AM}, there exists $R>0$ such that
$$
w_{n}(x, 0)<a, \quad \mbox{ for all } |x|\geq R \mbox{ and } n\in \mathbb{N}.
$$ 
Therefore, $v_{n}(x, 0)=w_{n}(x-y_{n}, 0)<a$ for all $x\in B^{c}_{R}(y_{n})$ and $n\in \mathbb{N}$. On the other hand, there exists $n_{0} \in \mathbb{N}$ such that  
$$
\Lambda^{c}_{\e_{n}}\subset B^{c}_{\frac{r}{\e_{n}}}(y_{n})\subset B^{c}_{R}(y_{n}), \quad \mbox{ for all } n\geq n_{0}.
$$
Consequently, 
\begin{align*}
v_{n}(x, 0)<a, \quad \mbox{ for all } x\in \Lambda^{c}_{\e_{n}} \mbox{ and } n\geq n_{0}.
\end{align*}
Then, there exists $\e_{0}>0$ such that, for all $\e \in (0, \e_{0})$, Problem \eqref{EP} admits a solution $v_{\e}$.
Invoking the weak Harnack inequality \cite[Proposition 2]{FF}, we conclude that $v_{\e}(\cdot, 0)>0$ in $\R^{N}$.

Next, we study the behavior of the maximum points of solutions to Problem \eqref{P}. 
Take $\e_{n}\rightarrow 0$ and let $(v_{n})\subset X_{\e_{n}}$ be a sequence of mountain pass solutions to \eqref{MEP}.  By Lemma \ref{lem2.5AM}, up to a subsequence, there exist $(y_{n})\subset \R^{N}$ and $w\in \x\setminus\{0\}$ such that $w_{n}(x, y):=v_{n}(x+y_{n}, y)$ strongly converges to $w$ in $\x$ as $n\ri \infty$. Moreover, there exists $x_{0}\in M$ such that $\e_{n}y_{n}\ri x_{0}$. 
Let now $q_{n}$ be a global maximum point of $w_{n}(\cdot, 0)$. Lemma \ref{lem2.6AM} and Lemma \ref{lemDELTA} guarantee that there exists $\bar{R}>0$ such that $|q_{n}|\leq \bar{R}$ for all $n\in \mathbb{N}$. Thus, $x_{n}:=q_{n}+y_{n}$ is a global maximum point of $v_{n}(\cdot, 0)$, and $\e_{n}x_{n}\ri x_{0}\in M$. This fact together with the continuity of $V$ produces 
$$
\lim_{n\rightarrow \infty} V(\e_{n} x_{n})=V(x_{0})=-V_{0}.
$$
Finally, we prove a decay estimate for $v_{n}(\cdot, 0)$. 
Using Lemma \ref{lem2.6AM}, $(f_1)$, and the definition of $g$, there exists $R_{0}>0$ sufficiently large such that
\begin{align}\label{TERESA}
g_{\e_{n}}(x+y_{n}, w_{n}(x,0))\leq \delta w_{n}(x, 0), \quad \mbox{ for all } |x|> R_{0} \mbox{ and } n\in \mathbb{N},
\end{align}
where $\delta\in (0, m^{2s}-V_{1})$ is fixed.
Arguing as in \cite{ADCDS} (see formulas $(57)$ and $(58)$ in \cite{ADCDS}), we can find a positive continuous function $\bar{w}\in H^{s}(\R^{N})$ and $R_{1}>0$ such that
\begin{align}\label{HZ2AMPA}
(-\Delta+m^{2})^{s} \bar{w}-(V_{1}+\delta)\bar{w}= 0 \quad \mbox{ in } \bar{B}^{c}_{R_{1}}
\end{align}
and
\begin{align}\label{HZ1AMPA}
0<\bar{w}(x)\leq C e^{-c |x|}, \quad \mbox{ for all } x\in \R^{N},
\end{align} 
for some $C, c>0$.
Put $R_{2}:=\max\{R_{0}, R_{1}\}$. Thus, by $(V_1)$ and \eqref{TERESA}, we have that
\begin{align}\label{HZ3AMPA}
(-\Delta+m^{2})^{s} w_{n}(\cdot, 0)-(V_{1}+\delta) w_{n}(\cdot,0) \leq 0 \quad \mbox{ in } \bar{B}^{c}_{R_{2}}. 
\end{align}
Set $b:=\min_{\bar{B}_{R_{2}}} \bar{w}>0$ and $z_{n}:=(\ell+1)\bar{w}-bw_{n}(\cdot, 0)$, where $\ell:=\sup_{n\in \mathbb{N}} |w_{n}(\cdot, 0)|_{\infty}<\infty$. 
Let us observe that $z_{n}\geq 0$ in $\bar{B}_{R_{2}}$ and that \eqref{HZ2AMPA} and \eqref{HZ3AMPA} yield
$$
(-\Delta+m^{2})^{s}z_{n}-(V_{1}+\delta)z_{n}\geq 0 \quad\mbox{ in } \bar{B}^{c}_{R_{2}}.
$$
Because $V_{1}+\delta<m^{2s}$, we can use a comparison argument (see \cite[Theorem 4.3]{ADCDS} with $\Omega=\bar{B}^{c}_{R_{2}}$) to deduce that $z_{n}\geq 0$ in $\R^{N}$.
In view of (\ref{HZ1AMPA}), there exists $C_{0}>0$ such that
\begin{align*}
0\leq w_{n}(x, 0)\leq C_{0} \, e^{-c |x|}, \quad  \mbox{ for all } x\in \R^{N} \mbox{ and } n\in \mathbb{N}.
\end{align*}
Recalling that $v_{n}(x, 0)=w_{n}(x-y_{n}, 0)$, we arrive at
\begin{align*}
v_{n}(x, 0)=w_{n}(x-y_{n},0)\leq C_{0} \, e^{-c|x-y_{n}|}, \quad \mbox{ for all } x\in \R^{N} \mbox{ and } n\in \mathbb{N}.
\end{align*}
This completes the proof of Theorem \ref{thm1}.
\end{proof}

\section{Final comments: multiple concentrating solutions to \eqref{P}}
As in \cite{ADCDS}, if we suppose that the continuous potential $V:\R^{N}\ri \R$ satisfies the following conditions:
\begin{compactenum}[$(V'_1)$]
\item  there exists $V_{0}\in (0, m^{2s})$ such that $-V_{0}:=\inf_{x\in \R^{N}}V(x)$,
\item there exists a bounded open set $\Lambda\subset \R^{N}$ such that
\begin{align*}
-V_{0}<\min_{x\in \partial \Lambda} V(x) \,\, \mbox{ and } \,\, 0\in M:=\{x\in \Lambda: V(x)=-V_{0}\},
\end{align*}
\end{compactenum}
then we obtain the next multiplicity result:
\begin{thm}\label{thm2}
Assume that $(V'_1)$-$(V'_2)$ and $(f_1)$-$(f_4)$ hold. Then, for each $\delta>0$ such that
$$
M_{\delta}:=\{x\in \R^{N}: {\rm dist}(x, M)\leq \delta\}\subset \Lambda,
$$ 
there exists $\e_{\delta}>0$ such that, for each $\e\in (0, \e_{\delta})$, Problem \eqref{P} has at least $cat_{M_{\delta}}(M)$ positive solutions. Moreover, if $u_{\e}$ denotes one of these solutions and $x_{\e}$ is a global maximum point of $u_{\e}$, then we have 
$$
\lim_{\e\rightarrow 0} V(\e x_{\e})=-V_{0}.
$$	
\end{thm}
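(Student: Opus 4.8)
The plan is to combine the penalization scheme of Sections~3--4 with Lusternik--Schnirelman category theory, in the form adapted to merely continuous nonlinearities by Szulkin and Weth. Observe first that $(V'_1)$--$(V'_2)$ imply $(V_1)$--$(V_2)$ with $V_1=V_0$, so the modified problem \eqref{MEP}, the functional $J_\e\in C^1(X_\e,\R)$ and all the results of Section~3 remain available: in particular the Palais--Smale condition at every level in $\left(0,\frac{s}{N}(\zeta S_*)^{\frac{N}{2s}}\right)$ (Lemma~\ref{lemma2}) and the bound $\limsup_{\e\to 0}c_\e\le d_{V_0}<\frac{s}{N}(\zeta S_*)^{\frac{N}{2s}}$ (Lemma~\ref{lem2.3AM}). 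Since $f$ is only continuous, $\mathcal{N}_\e$ need not be a $C^1$ manifold; I would therefore introduce $S_\e^+:=\{v\in X_\e:\|v\|_\e=1,\ v^+(\cdot,0)\not\equiv 0\}$, check by means of $(g_4)$ that every $v\in S_\e^+$ possesses a unique positive multiple $m_\e(v)\in\mathcal{N}_\e$, so that $m_\e:S_\e^+\to\mathcal{N}_\e$ is a homeomorphism, and work with the reduced functional $\psi_\e:=J_\e\circ m_\e\in C^1(S_\e^+,\R)$ on the open $C^{1,1}$ manifold $S_\e^+$, whose critical points correspond to the nonzero critical points of $J_\e$ and whose Palais--Smale sequences at levels below $\frac{s}{N}(\zeta S_*)^{\frac{N}{2s}}$ are relatively compact by Lemma~\ref{lemma2}.

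Next I would build, for $\e$ small, a continuous map $\Phi_\e:M\to\mathcal{N}_\e$ producing almost-minimal energy and a barycenter map $\beta_\e$ detecting the concentration point. Let $w=w_{V_0}$ be the positive exponentially decaying ground state of the autonomous critical problem \eqref{AEP} with $\mu=V_0$ (Theorem~\ref{EGS}, together with the decay obtained as in Lemmas~\ref{lem2.6AM} and \ref{lemDELTA}); fix $\rho>0$ with $M_\rho\subset\Lambda$ and a cut-off $\eta\in C_c^\infty(\R)$, $0\le\eta\le 1$, $\eta\equiv 1$ near $0$. For $y\in M$ set
\[
\Psi_{\e,y}(x,z):=\eta\!\left(\e\,\big|(x,z)-(y/\e,0)\big|\right)\,w\!\left(x-y/\e,\,z\right),\qquad
\Phi_\e(y):=m_\e\!\left(\frac{\Psi_{\e,y}}{\|\Psi_{\e,y}\|_\e}\right).
\]
Then $\supp\Psi_{\e,y}(\cdot,0)\subset\Lambda_\e$ for $\e$ small, and the crucial claim is $\lim_{\e\to 0}\sup_{y\in M}\big|J_\e(\Phi_\e(y))-d_{V_0}\big|=0$; here, exactly as in Lemma~\ref{UPPERB}, one must expand $\|\Psi_{\e,y}\|^2_{X^s(\R^{N+1}_+)}$ carefully via the integration-by-parts identity used to prove \eqref{J1}, since the inhomogeneity of $(-\Delta+m^2)^s$ rules out a pure dilation argument. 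On the other side, fix a smooth $\chi:\R^N\to\R^N$ with $\chi=\mathrm{id}$ on $M_\rho$ and $\chi(\R^N)\subset M_\rho$, and define on $\mathcal{N}_\e$
\[
\beta_\e(v):=\frac{\displaystyle\int_{\R^N}\chi(\e x)\,(v^+(x,0))^{\2}\,dx}{\displaystyle\int_{\R^N}(v^+(x,0))^{\2}\,dx}.
\]
Using a concentration argument based on Lemma~\ref{Lions2} and on the splitting of almost-minimizing sequences of $J_\e$ on $\mathcal{N}_\e$, I would prove that $v_n\in\mathcal{N}_{\e_n}$ with $J_{\e_n}(v_n)\to d_{V_0}$ and $\e_n\to 0$ force the corresponding concentration points $\e_n z_n$ to approach $M$; hence $\beta_\e(v)\to M$ uniformly on $\{v\in\mathcal{N}_\e:J_\e(v)\le d_{V_0}+h(\e)\}$ for any $h(\e)\to 0^+$, and, combined with the energy estimate for $\Phi_\e$, $\beta_\e(\Phi_\e(y))=y+o_\e(1)$ uniformly on $M$, so $\beta_\e\circ\Phi_\e$ is homotopic to the inclusion $M\hookrightarrow M_\delta$.

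The conclusion is then standard. Pick $h(\e)\to 0^+$ with $d_{V_0}+h(\e)<\frac{s}{N}(\zeta S_*)^{\frac{N}{2s}}$ and $\Phi_\e(M)\subset\widetilde{\mathcal{N}}_\e:=\{v\in\mathcal{N}_\e:J_\e(v)\le d_{V_0}+h(\e)\}$ for $\e\in(0,\e_\delta)$. The maps $M\xrightarrow{\Phi_\e}\widetilde{\mathcal{N}}_\e\xrightarrow{\beta_\e}M_\delta$, whose composition is homotopic to $\mathrm{id}_M$, give ${\rm cat}\,\widetilde{\mathcal{N}}_\e\ge {\rm cat}_{M_\delta}(M)$; transporting the sublevel set to $S_\e^+$ through $m_\e^{-1}$ and applying the Lusternik--Schnirelman theory for $C^1$ functionals on complete $C^{1,1}$ manifolds (the Szulkin--Weth framework), together with the Palais--Smale condition of Lemma~\ref{lemma2}, yields at least ${\rm cat}_{M_\delta}(M)$ critical points of $\psi_\e$, hence of $J_\e$, i.e. at least ${\rm cat}_{M_\delta}(M)$ nonnegative solutions $v_\e$ of \eqref{MEP}. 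Finally, as in the proof of Theorem~\ref{thm1}, the uniform $L^\infty$-bound (Lemma~\ref{moser}) and the uniform decay (Lemma~\ref{lem2.6AM}), applied along any sequence $\e_n\to 0$ with $\sup_n J_{\e_n}(v_{\e_n})<\infty$, force $v_\e(\cdot,0)<a$ on $\Lambda_\e^c$ for $\e$ small; therefore $v_\e$ solves \eqref{EP} and $u_\e:={\rm Tr}(v_\e)>0$ solves \eqref{P}, while the location of the global maxima and $\lim_{\e\to 0}V(\e x_\e)=-V_0$ follow from $\beta_\e(v_\e)\in M_\delta$, the arbitrariness of $\delta$, and the uniform decay and lower bound (Lemmas~\ref{lem2.5AM}, \ref{lem2.6AM}, \ref{lemDELTA}). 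The main obstacle is the pair of scaling-free, critical-growth-sensitive estimates $\sup_{y\in M}|J_\e(\Phi_\e(y))-d_{V_0}|\to 0$ and the concentration of $\beta_\e$ on almost-minimizing sequences in $\mathcal{N}_\e$: both are heavier here than in the fractional Laplacian case because of the inhomogeneity of $(-\Delta+m^2)^s$ and because all energy levels must be kept strictly below the trace-Sobolev threshold $\frac{s}{N}(\zeta S_*)^{\frac{N}{2s}}$.
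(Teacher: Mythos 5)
Your overall strategy matches the paper's: the paper proves Theorem~\ref{thm2} by outsourcing the scheme to Section~5 of \cite{ADCDS} (Nehari manifold reduction on an incomplete unit sphere in the Szulkin--Weth framework, maps $\Phi_\e$ and $\beta_\e$, Ekeland, category theory, then the $L^\infty$-bound and decay to remove the penalization), and lists only the formula replacements forced by the critical term $(v^+(\cdot,0))^{\2}$, together with the substitution of Lemmas~\ref{lemma2} and \ref{Lions2} for their subcritical counterparts. Your proposal reconstructs that scheme in full, with the correct compactness threshold $\frac{s}{N}(\zeta S_*)^{\frac{N}{2s}}$ and the correct identification of the two delicate steps (energy of $\Phi_\e$, concentration of $\beta_\e$) that must be re-done because $(-\Delta+m^2)^s$ is not dilation-invariant. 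Two comments, one of which is substantive.

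The substantive point is your definition $S_\e^+:=\{v\in X_\e:\|v\|_\e=1,\ v^+(\cdot,0)\not\equiv 0\}$ and the accompanying claim that every $v\in S_\e^+$ has a unique positive multiple on $\mathcal{N}_\e$. This set is too large. If $v^+(\cdot,0)$ has null-measure support inside $\Lambda_\e$, then $(g_3)$-$(ii)$ gives $G_\e(x,tv(x,0))\le\frac{V_1}{2\kappa}t^2v^2(x,0)$ on the support, and combining \eqref{m-ineq}, \eqref{equivalent} with $\kappa>\frac{V_1}{m^{2s}-V_1}$ yields
\begin{equation*}
J_\e(tv)\ge \frac{t^2}{2}\left(1-\frac{V_1}{\kappa(m^{2s}-V_1)}\right)\|v\|_\e^2\longrightarrow\infty\quad(t\to\infty),
\end{equation*}
so $h_v(t)=J_\e(tv)$ has no critical point on $(0,\infty)$ and $m_\e(v)$ is not defined. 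The correct definition is $\mathbb{S}_\e^+:=\mathbb{S}_\e\cap X_\e^+$ with $X_\e^+:=\{u\in X_\e:|\supp(u^+(\cdot,0))\cap\Lambda_\e|>0\}$; with that one also shows, by the same inequality, that every $u\in\mathcal{N}_\e$ automatically lies in $X_\e^+$, so $m_\e:\mathbb{S}_\e^+\to\mathcal{N}_\e$ is genuinely a bijection. Everything downstream (continuity of $m_\e$, blow-up of $J_\e\circ m_\e$ near $\partial\mathbb{S}_\e^+$, the category count) depends on this.

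A lesser point: you describe the reduced functional as living on a ``complete'' $C^{1,1}$ manifold. It does not --- $\mathbb{S}_\e^+$ is an open, non-complete submanifold of the unit sphere, and the whole point of invoking the Szulkin--Weth abstract result is precisely to cope with this non-completeness, using the blow-up of $\psi_\e$ as one approaches $\partial\mathbb{S}_\e^+$. You clearly intend the right framework, but the phrasing should be corrected so the argument does not appear to rest on a false hypothesis. Beyond these two points, your choices (an $L^{\2}$-weighted barycenter instead of the $L^2$-weighted one, a slightly different $\e$-scaling of the cut-off in the extension variable) are legitimate variants that do not affect the outcome.
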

Since the proof of Theorem \ref{thm2} is similar to the one of \cite[Theorem 1.2]{ADCDS}, we only point out the main differences.
For the proof of the critical version of  \cite[Lemma 5.1]{ADCDS}, we only need to replace in formula $(67)$ in \cite{ADCDS} the term
\begin{align*}
\int_{\La_{\e}} F(tu_{n}(x, 0))\, dx
\end{align*}
by the term
\begin{align*}
\int_{\La_{\e}} \left[F(tu_{n}(x, 0))+\frac{t^{\2}}{\2}(u_{n}^{+}(x, 0))^{\2}\right]\, dx,
\end{align*}
and use the same estimates given in the proof of \cite[Lemma 5.1-(iv)]{ADCDS}. For the analog of \cite[Corollary 1]{ADCDS}, we exploit Lemma \ref{lemma2} instead of \cite[Lemma 3.2]{ADCDS}.
For the proof of the critical version of \cite[Lemma 5.4]{ADCDS}, it suffices to replace in formula $(71)$ in \cite{ADCDS} the term
\begin{align*}
\int_{\R^{N}} F(t_{\e_{n}}\eta(|(\e_{n}x',0)|)w(x', 0))\, dx'
\end{align*}
by the term
\begin{align*}
\int_{\R^{N}} F(t_{\e_{n}}\eta(|(\e_{n}x',0)|)w(x', 0))+\frac{t^{\2}_{\e_{n}}}{\2} (\eta(|(\e_{n}x',0)|)w(x', 0))^{\2} \, dx',
\end{align*}
in formula $(72)$ in \cite{ADCDS} we substitute the term
\begin{align*}
\int_{\R^{N}} f(t_{\e_{n}}\eta(|(\e_{n}x',0)|)w(x', 0)) t_{\e_{n}}\eta(|(\e_{n}x',0)|)w(x', 0)\, dx'
\end{align*}
with
\begin{align*}
\int_{\R^{N}} [f(t_{\e_{n}}\eta(|(\e_{n}x',0)|)w(x', 0)) t_{\e_{n}}\eta(|(\e_{n}x',0)|)w(x', 0)+(t_{\e_{n}}\eta(|(\e_{n}x',0)|)w(x', 0))^{\2}]\, dx',
\end{align*}
and we replace formula $(73)$ in \cite{ADCDS} by
\begin{align*}
\|\Psi_{\e_{n}, z_{n}}\|_{\e_{n}}^{2} &\geq \int_{B_{\frac{\delta}{2}}} \frac{[f(t_{\e_{n}}w(x', 0))+(t_{\e_{n}}w(x', 0))^{\2-1} ]}{t_{\e_{n}}w(x', 0)} w^{2}(x', 0)\, dx' \nonumber \\
&\geq  t_{\e_{n}}^{\2-2} \int_{B_{\frac{\delta}{2}}} w^{\2}(x', 0)\,dx' \geq t_{\e_{n}}^{\2-2} |B_{\frac{\delta}{2}}| w^{\2}(\hat{x}, 0).
\end{align*}
Finally, for the proof of the critical version of \cite[Proposition 4]{ADCDS}, we consider Lemma \ref{Lions2} instead of  \cite[Lemma 3.4]{ADCDS}, and replace the terms 
\begin{align*}
\int_{\mathbb{R}^{N}\setminus B_{R/\e_{n}}} f(v_{n}(x, 0)) v_{n}(x, 0) \, dx,
\end{align*} 
\begin{align*}
\int_{\mathbb{R}^{N}} F(\tilde{v}_{n}(x, 0))\,dx,
\end{align*}
\begin{align*}
\int_{\mathbb{R}^{N}} F(t_{n}u_{n}(x, 0))\,dx,
\end{align*}
by
\begin{align*}
\int_{\mathbb{R}^{N}\setminus B_{R/\e_{n}}} \left[f(v_{n}(x, 0)) v_{n}(x, 0)+(v_{n}^{+}(x, 0))^{\2}\right] \, dx,
\end{align*} 
\begin{align*}
\int_{\mathbb{R}^{N}} \left[F(\tilde{v}_{n}(x, 0))+\frac{t_{n}^{\2}}{\2}(\tilde{v}_{n}^{+}(x, 0))^{\2}\right]\,dx,
\end{align*}
\begin{align*}
\int_{\mathbb{R}^{N}} \left[F(t_{n} u_{n}(x, 0))+\frac{t_{n}^{\2}}{\2}(u_{n}^{+}(x, 0))^{\2}\right] \,dx,
\end{align*}
respectively. No additional substantial modifications are necessary to deduce the required multiplicity result.


\begin{thebibliography}{777}


\bibitem{Adams}
R. A. Adams, 
{\it Sobolev spaces}, 
Pure and Applied Mathematics, Vol. 65. Academic Press, New York-London, 1975. xviii+268 pp.

\bibitem{ADOS}
C.O. Alves, J.M. do \'O, M.A.S. Souto,
{\it Local mountain-pass for a class of elliptic problems in $\R^N$ involving critical growth},
Nonlinear Analysis {\bf 46} (2001) 495--510.


\bibitem{AF}
C.O. Alves, G.M. Figueiredo,
{\it Multiplicity of positive solutions for a quasilinear problem in $\R^{N}$ via penalization method},
Adv. Nonlinear Stud. {\bf 5} (2005), no. 4, 551--572.




\bibitem{AR}
A. Ambrosetti, P. H.  Rabinowitz, 
{\it Dual variational methods in critical point theory and applications},
J. Functional Analysis {\bf 14} (1973), 349--381.


\bibitem{Ajmp}
V. Ambrosio, 
{\it Ground states solutions for a non-linear equation involving a pseudo-relativistic Schr\"odinger operator}, 
J. Math. Phys. {\bf 57} (2016), no. 5, 051502, 18 pp.




\bibitem{ADCDS}
V. Ambrosio,
{\it The nonlinear fractional relativistic Schr\"odinger equation: existence, multiplicity, decay and concentration results}, 
Discrete Contin. Dyn. Syst. {\bf 41} (2021), no. 12, 5659--5705.



\bibitem{AmbrosioBOOK}
V. Ambrosio, 
{\it Nonlinear fractional Schr\"odinger equations in $\R^{N}$}, 
Birkh\"auser, 2021.


\bibitem{AJDE}
V. Ambrosio, 
{\it On the fractional relativistic Schr\"odinger operator}, 
J. Differential Equations {\bf 308} (2022), 327--368.

\bibitem{ArS}
N. Aronszajn, K. T. Smith, 
{\it Theory of Bessel potentials. I}, Ann. Inst. Fourier (Grenoble) {\bf 11} (1961), 385--475.


\bibitem{BACDPS}
B. Barrios, E. Colorado, A. de Pablo, U. S\'anchez,
{\it On some critical problems for the fractional Laplacian operator}, 
J. Differential Equations {\bf 252} (2012), no. 11, 6133--6162. 



\bibitem{BRCDPS}
C. Br\"andle, E. Colorado, A. de Pablo, U. S\'anchez, 
{\it A concave-convex elliptic problem involving the fractional Laplacian}
Proc. Roy. Soc. Edinburgh Sect. A {\bf 143} (2013), no. 1, 39--71. 






\bibitem{BL}
H. Br\'ezis, E. Lieb, 
{\it A relation between pointwise convergence of functions and convergence of functionals}, 
Proc. Amer. Math. Soc. {\bf 88} (1983), no. 3, 486--490.

\bibitem{BuVa}
C. Bucur, E. Valdinoci, 
{\it Nonlocal diffusion and applications}. Lecture Notes of the Unione Matematica Italiana, 20. Springer, Unione Matematica Italiana, Bologna, 2016. xii+155 pp.



\bibitem{BMP}
H. Bueno, O.H. Miyagaki, G.A. Pereira, 
{\it Remarks about a generalized pseudo-relativistic Hartree equation}, 
J. Differential Equations {\bf 266} (2019), no. 1, 876--909.



\bibitem{CS}
L. Caffarelli, L. Silvestre,
{\it An extension problem related to the fractional Laplacian}, 
Comm. Partial Differential Equations {\bf 32} (2007), no. 7-9, 1245--1260


\bibitem{CMS}
R. Carmona, W.C. Masters, B. Simon, 
{\it Relativistic Schr\"odinger operators: Asymptotic behavior of the eigenfunctions}, 
J. Func. Anal {\bf 91} (1990), 117--142.




\bibitem{CZN1}
V. Coti Zelati, M. Nolasco,
{\it Existence of ground states for nonlinear, pseudo-relativistic Schr\"odinger equations},
Atti Accad. Naz. Lincei Cl. Sci. Fis. Mat. Natur. Rend. Lincei (9) Mat. Appl. {\bf 22 } (2011), 51--72.






\bibitem{CT}
A. Cotsiolis, N.K. Tavoularis, 
{\it Best constants for Sobolev inequalities for higher order fractional derivatives},
J. Math. Anal. Appl. {\bf 295} (2004), no. 1, 225--236. 


\bibitem{DF}
M. del Pino, P.L. Felmer, 
{\it Local mountain passes for semilinear elliptic problems in unbounded domains},
Calc. Var. Partial Differential Equations, {\bf 4} (1996), 121--137.

\bibitem{DPV}
E. Di Nezza, G. Palatucci, E. Valdinoci, 
{\it Hitchhiker's guide to the fractional Sobolev spaces}, 
Bull. Sci. Math. {\bf 136} (2012), no. 5, 521--573.


\bibitem{DMV}
S. Dipierro, M. Medina, E. Valdinoci, 
{\it Fractional elliptic problems with critical growth in the whole of $\R^n$}, 
Appunti. Scuola Normale Superiore di Pisa (Nuova Serie) [Lecture Notes. Scuola Normale Superiore di Pisa (New Series)], 15. Edizioni della Normale, Pisa, 2017. viii+152 pp.



\bibitem{ES}
A. Elgart, B. Schlein, 
{\it Mean field dynamics of Boson stars}, 
Comm. Pure Appl. Math. {\bf 60} (2007), no. 4, 500--545.


\bibitem{Erd}
A. Erd\'elyi, W. Magnus, F. Oberhettinger, F. G. Tricomi, 
{\it Higher transcendental functions. Vol. II}, Based on notes left by Harry Bateman. Reprint of the 1953 original. Robert E. Krieger Publishing Co., Inc., Melbourne, Fla., 1981. xviii+396 pp. 




\bibitem{FF}
M. M. Fall, V. Felli, 
{\it Unique continuation properties for relativistic Schr\"odinger operators with a singular potential}, 
Discrete Contin. Dyn. Syst. {\bf 35} (2015), no. 12, 5827--5867.



\bibitem{FigFu}
G.M. Figueiredo, M. F. Furtado,
{\it Positive solutions for a quasilinear Schr\"odinger equation with critical growth},
J. Dynam. Differential Equations {\bf 24} (2012), no. 1, 13--28.



\bibitem{FW}
A. Floer, A. Weinstein, 
{\it Nonspreading wave packets for the cubic Schr\"odinger equation with a bounded potential}, 
J. Funct. Anal. {\bf 69} (1986), no. 3, 397--408.


\bibitem{FJL}
J. Fr\"ohlich, B.L.G. Jonsson, E. Lenzmann, 
{\it Boson stars as solitary waves}, Comm. Math. Phys. {\bf 274} (2007), no. 1, 1--30.

\bibitem{Hormander}
L. H\"ormander, 
{\it The analysis of linear partial differential operators. III. Pseudo-differential operators}, Reprint of the 1994 edition. Classics in Mathematics. Springer, Berlin, 2007. viii+525 pp.





\bibitem{LL}
E. H. Lieb, M. Loss,
{\it Analysis}, 
Graduate Studies in Mathematics, 14. American Mathematical Society, Providence, RI, 1997. xviii+278 pp.






\bibitem{LS}
E. H. Lieb, R. Seiringer, 
{\it The stability of matter in quantum mechanics}, 
Cambridge University Press, Cambridge, 2010. xvi+293 pp.

\bibitem{LY1}
E. H. Lieb, H. T. Yau,
{\it The Chandrasekhar theory of stellar collapse as the limit of quantum mechanics},
Comm. Math. Phys. 112 (1987), no. 1, 147--174.



\bibitem{Lions1}
P.L. Lions, {\it The concentration-compactness principle in the calculus of variations. The limit case. I}, 
Rev. Mat. Iberoamericana {\bf 1} (1985), 145--201.


\bibitem{Lions2}
P.L. Lions, {\it The concentration-compactness principle in the calculus of variations. The limit case. II}, 
Rev. Mat. Iberoamericana {\bf 1} (1985), 45--121.




 


\bibitem{Moser} 
J. Moser,
{\it A new proof of De Giorgi's theorem concerning the regularity problem for elliptic differential equations},
Comm. Pure Appl. Math., {\bf13} (1960), 457--468. 





\bibitem{Rab}
P. Rabinowitz,
{\it On a class of nonlinear Schr\"odinger equations}
Z. Angew. Math. Phys. {\bf 43} (1992), no. 2, 270--291.


\bibitem{ryznar}
M. Ryznar, 
{\it Estimate of Green function for relativistic $\alpha$-stable processes}, 
Potential Analysis, {\bf 17}, (2002), 1--23.



\bibitem{SecchiJDDE}
S. Secchi, 
{\it On some nonlinear fractional equations involving the Bessel potential},
J. Dynam. Differential Equations {\bf 29} (2017), no. 3, 1173--1193. 



\bibitem{SV}
R. Servadei, E. Valdinoci, 
{\it The Brezis-Nirenberg result for the fractional Laplacian}, 
Trans. Amer. Math. Soc. {\bf 367} (2015), no. 1, 67--102. 

\bibitem{Shen}
L. Shen, 
{\it Existence result for fractional Schr\"odinger-Poisson systems involving a Bessel operator without Ambrosetti-Rabinowitz condition}, Comput. Math. Appl. {\bf 75} (2018), no. 1, 296--306. 


\bibitem{stein}
E. Stein,
{\it Singular integrals and differentiability properties of functions}, 
Princeton Mathematical Series, No. 30 Princeton University Press, Princeton, N.J. 1970 xiv+290 pp. 



\bibitem{StingaT}
P. R. Stinga, J. L. Torrea, 
{\it Extension problem and Harnack's inequality for some fractional operators}, 
Comm. Partial Differential Equations {\bf 35} (2010), no. 11, 2092--2122.



\bibitem{Wang}
X. Wang,
{\it On concentration of positive bound states of nonlinear Schr\"odinger equations},
 Comm. Math. Phys. {\bf 53} (1993), 229--244.


\bibitem{Weder1}
R. A. Weder, 
{\it Spectral properties of one-body relativistic spin-zero Hamiltonians}, 
Ann. Inst. H. Poincar\'e Sect. A (N.S.) {\bf 20} (1974), 211--220.


\bibitem{Weder2}
R. A. Weder, 
{\it Spectral analysis of pseudodifferential operators}, 
J. Functional Analysis {\bf 20} (1975), no. 4, 319--337.

\bibitem{W}
M. Willem,
{\it Minimax theorems},
Progress in Nonlinear Differential Equations and their Applications, 24. Birkh\"auser Boston, Inc., Boston, MA, 1996. x+162 pp.






\end{thebibliography}
\end{document}